\newtheorem{thm}{Theorem}[section]
\newtheorem*{thm*}{Theorem}
\newtheorem{lem}[thm]{Lemma}
\newtheorem{fact}[thm]{Fact}
\newtheorem{facts}[thm]{Facts}
\newtheorem{prop}[thm]{Proposition}
\newtheorem*{prop*}{Proposition}
\newtheorem{cor}[thm]{Corollary}
\theoremstyle{definition}
\newtheorem{notation}[thm]{Notation}
\newtheorem{remark}[thm]{Remark}
\newtheorem{remarks}[thm]{Remarks}
\newcommand{\dminus}{ 
\buildrel\textstyle\ .\over{\hbox{ 
\vrule height3pt depth0pt width0pt}{\smash-} 
}}
\def\bb{\mathbb}
\def\om{\omega}
\def\vp{\varphi}
\def\bb{\mathbb}
\def\sg{\sigma}
\def\cal{\mathcal}
\newcommand{\cstar}{$\mathrm{C}^*$}
\newcommand{\wstar}{$\mathrm{W}^*$}
\def\bb R{\mathbf R}
\def \md{\operatorname{mod}}
\newcommand\ip[2]{\left\langle #1\, ,\!\ #2 \right\rangle}
\newcommand\cU{{\cal U}}
\newcommand\cL{{\cal L}}
\def \sa{\operatorname{sa}}
\def \rb{\operatorname{rb}}
\def \tb{\operatorname{tb}}
\def \rt{\operatorname{right}}
\def \vP{\mathbf{P}}
\def \vQ{\mathbf{Q}}
\def \vR{\mathbf{R}}
\newcounter{axiomlist}
\begin{document}


\title{Totally bounded elements in \wstar-probability spaces}

\author{J. Arulseelan, I. Goldbring, B. Hart and T. Sinclair}
\thanks{I. Goldbring was partially supported by NSF grant DMS-2054477.
T. Sinclair was partially supported by NSF grant DMS-2055155.
B. Hart was supported by an NSERC DG}
\address{Department of Mathematics and Statistics, McMaster University, 1280 Main St., Hamilton ON, Canada L8S 4K1}
\email{arulseej@mcmaster.ca}
\urladdr{https://sites.google.com/view/jananan-arulseelan}

\address{Department of Mathematics\\University of California, Irvine, 340 Rowland Hall (Bldg.\# 400),
Irvine, CA 92697-3875}
\email{isaac@math.uci.edu}
\urladdr{http://www.math.uci.edu/~isaac}

\address{Department of Mathematics and Statistics, McMaster University, 1280 Main St., Hamilton ON, Canada L8S 4K1}
\email{hartb@mcmaster.ca}
\urladdr{http://ms.mcmaster.ca/~bradd/}

\address{Mathematics Department, Purdue University, 150 N. University Street, West Lafayette, IN 47907-2067}
\email{tsincla@purdue.edu}
\urladdr{http://www.math.purdue.edu/~tsincla/}


\begin{abstract}
We introduce the notion of a totally ($K$-) bounded element of a \wstar-probability space $(M, \vp)$ and, borrowing ideas of Kadison, give an intrinsic characterization of the $^*$-subalgebra $M_{\tb}$ of totally bounded elements.  Namely, we show that $M_{\tb}$ is the unique strongly dense $^*$-subalgebra $M_0$ of totally bounded elements of $M$ for which the collection of totally $1$-bounded elements of $M_0$ is complete with respect to the $\|\cdot\|_\varphi^\#$-norm and for which $M_0$ is closed under all operators $h_a(\log(\Delta))$ for $a \in \mathbb{N}$, where $\Delta$ is the modular operator and $h_a(t):=1/\cosh(t-a)$ (see Theorem 4.3).  As an application, we combine this characterization with Rieffel and Van Daele's bounded approach to modular theory to arrive at a new language and axiomatization of \wstar-probability spaces as metric structures.  Previous work of Dabrowski had axiomatized \wstar-probability spaces using a smeared version of multiplication, but the subalgebra $M_{\tb}$ allows us to give an axiomatization in terms of the original algebra operations.  Finally, we prove the (non-)axiomatizability of several classes of \wstar-probability spaces.

\end{abstract}

\maketitle

\tableofcontents

\section{Introduction}\label{SectionIntro}

First introduced in earnest in the papers \cite{FHS2} and \cite{FHS3}, the model theory of finite von Neumann algebras has been a very active area of research; see \cite{GH} for a survey of many of the results that have been obtained to date.  In order to be treated as a metric structure, one must equip a finite von Neumann algebra $M$ with a faithful, normal trace $\tau$; such a tracial von Neumann algebra (or tracial \wstar-probability space) is then treated as a structure in a many-sorted language, where the sorts are operator norm bounded balls of positive integer radii, equipped with the metric coming from the trace.  Each sort is then equipped with the restriction of the $*$-algebra structure; it is important to note that the proof that the algebra multiplication is uniformly continuous makes use of the tracial property of the state.   

It is natural to try to extend the model-theoretic treatment of von Neumann algebras beyond the finite case.  The next obvious class of von Neumann algebras to consider are the $\sigma$-finite ones.  Since such algebras can be characterized as those which admit a faithful, normal state, one can try to mimic the case of finite von Neumann algebras by treating \wstar-probability spaces $(M,\vp)$ as metric structures in the same way as in the case of tracial \wstar-probability spaces.  This time, one runs into the immediate issue that the algebra multiplication restricted to operator norm bounded balls may no longer be uniformly continuous with respect to the metric coming from the state, whence one must find a different approach.  In \cite{Dab}, Dabrowski axiomatized the class of \wstar-probability spaces in a language that still used operator norm bounded balls as sorts, but which had to give up ordinary algebra multiplication in favor of a ``smeared multiplication'' obtained through an averaging procedure involving the modular automorphism group.


In this paper, we offer an alternative axiomatization of the class of \wstar-probability spaces motivated by the desire to have ordinary multiplication as part of the language.  As a consequence, a compromise must be made in terms of the sorts.  Since the issue in adapting the language for tracial \wstar-probability spaces to the non-tracial case stemmed from the fact that multiplication is not uniformly continuous with respect to the metric coming from the state, it becomes natural to change the sorts in such a way that multiplication does become uniformly continuous.  Consequently, we take as our sorts the set of \textbf{totally $K$-bounded} elements of the \wstar-probability space (as $K$ ranges over the natural numbers); here an element $x\in M$ is totally $K$-bounded if both left and right multiplication by both $x$ and $x^*$, viewed as operators on the dense subspace $M\omega_\vp$ of the GNS-Hilbert space $H_\varphi$, extend to bounded operators on $H_\vp$ of operator norm at most $K$.  (Here, $\om_\vp$ is the vector in $H_\vp$ corresponding to the identity in $M$.)  It is readily verified that all of the $*$-algebra operations, restricted to the set of totally $K$-bounded elements, are uniformly continuous with respect to the norm $\| \cdot \|^\#_\vp$ arising from the state (see \S 2 for the definition),  
which topologizes the strong* topology on operator norm bounded sets.  
Finally, since the set of totally bounded vectors can be shown to be strongly dense in the algebra, one can feel confident that one has not lost any information by restricting to such elements.


The proof that the category of models of our theory is equivalent to the category of \wstar-probability spaces is, in itself, mathematically interesting.  Indeed, to establish this equivalence of categories, one needs to show, in particular, that if one begins with a model $A$ of our theory, then forms the corresponding \wstar-probability space $(M, \vp)$ obtained as the strong closure of the direct limit of the sorts, and then ``dissects'' again to obtain a model $A' $, then naturally $A$ is isomorphic to $A'$.  In the model-theoretic treatments of \cstar-algebras and of tracial von Neumann algebras, the analogous step is quickly dispensed with by standard functional calculus techniques.  The aforementioned functional calculus arguments become problematic in the non-tracial setting, however, due to the necessity of handling left and right norms simultaneously.  More to the point, it is a priori possible that after completing $A$, $(M,\vp)$ might have new totally bounded elements than those coming from $A$, whence $A'$ would be strictly larger than $A$.  In order to show that this does not in fact happen, in Section \ref{tb} we use ideas of Kadison \cite{Kadison} to obtain a novel characterization of the totally bounded elements of a \wstar-probability space, a result which we believe is of independent interest.

A crucial tool in the study of non-tracial \wstar-probability spaces is the \textbf{modular automorphism group} $\sigma_t^\vp$ of the state $\vp$.  In Dabrowski's approach, the modular automorphism group is simply part of the language.  Since the model-theoretic ultraproduct construction associated to our language described above corresponds to the Ocneanu ultraproduct construction of \wstar-probability spaces (a fact established in Section \ref{SectionOcneanu}) and the modular automorphism group is known to commute with the Ocneanu ultraproduct (as shown by Ando and Haagerup in \cite{AH}), general Beth Definability considerations imply that one may add symbols for the modular automorphism group to our language as to obtain a definitional expansion of our theory which captures \wstar-probability spaces equipped with their modular automorphism group.  However, for reasons connected with recent applications of these ideas to computability concerns \cite{AGH}, it is important to have concrete, effectively given axioms for the modular automorphism group.  We use ideas from the bounded operator approach to Tomita-Takesaki theory given by Rieffel and Van Daele in \cite{RvD} (summarized in Section \ref{SectionRvD}) to give an effective axiomatization of the modular automorphism group in a natural extension of our earlier language; this is undertaken in Section \ref{SectionExpanding}.  (To be fair, part of our axiomatization from Section \ref{SectionLanguage} also uses a bit of the Rieffel and Van Daele approach.)

In the final section of the paper, we mention some (non-)axiomatizability results for various classes of factors viewed as structures in our language.  Most of these results follow from results of Ando and Haagerup \cite{AH} viewed through a model-theoretic lens.

We assume familiarity with basic von Neumann algebra theory, although we review the facts we need about \wstar-probability spaces and GNS representations in the next section.  We also assume familiarity with basic continuous model theory; the reader may consult \cite{munster} or \cite{FHS2} for introductions to continuous model theory as it pertains to operator algebras.  Although not essential, familiarity with the axiomatization of tracial von Neumann algebras given in \cite{FHS2} (see also \cite{GH}) will help the reader appreciate the nuances encountered when moving to the $\sigma$-finite setting. For Tomita-Takesaki theory, \cite{Takesaki} is a standard reference.

\section{Preliminaries on \wstar-probability spaces}\label{SectionPrelim}

As mentioned in the introduction, the class of structures we wish to capture model-theoretically are the $\sigma$-finite von Neumann algebras, by which we mean those von Neumann algebras $M$ for which any set $(p_i)_{i\in I}$ of pairwise orthogonal projections from $M$ must contain only countably many nonzero elements.  It is well-known that these algebras can be alternatively characterized as those which admit a faithful normal state; we will use this characterization to view a $\sigma$-finite von Neumann algebra as a metric structure.  

However, it will behoove us to consider yet another characterization of this class.  Fix a faithful, normal representation $\pi:M\to B(H)$ of $M$.  We recall that $\om\in H$ is called \textbf{cyclic} for $\pi$ if $\pi(M)\om$ is dense in $H$ and \textbf{separating} for $\pi$ if $\om$ is cyclic for $\pi(M)'$, that is, if $\pi(a)\om\not=0$ for all $a\in M\setminus \{0\}$.  $\pi$ is then called a \textbf{standard representation} if it admits a cyclic and separating vector $\om$, in which case one says that $(M,\pi,\om)$ is a \textbf{standard form} of $M$.  It is convenient to abuse notation and speak simply of $(M,\om)$ as a standard form of $M$, conflating the distinction between $M$ and $\pi(M)$.  One then has that $M$ is $\sigma$-finite if and only if it admits a standard representation; moreover, any two such standard representations of $M$ are unitarily conjugate.

If $(M,\om)$ is a standard form of $M$, then the state $\varphi_\om(a):=\langle a\om,\om\rangle$ is a faithful, normal state on $M$.  Conversely, given a faithful, normal state $\vp$ on $M$, one obtains a standard form of $M$ using the GNS construction, which we briefly recall.  First, one introduces the inner product $\ip{\cdot}{\cdot}_\vp$ on $M$ given by
\[
\ip{a}{b}_\vp = \vp(b^*a)
\]
with corresponding norm
\[
\|a\|_\vp = \sqrt{\ip{a}{a}_\vp}.
\] 
The Hilbert space completion of $(M,\ip{\cdot}{\cdot}_\vp)$ is denoted by $H_\vp$ and elements of $M$, when viewed as elements of $H_\vp$, are often decorated with a hat, so $a\in M$ would be denoted $\hat a$.  Given $a\in M$, one gets a bounded operator $\pi_\varphi(a)\in B(H_\vp)$ defined on the dense subset $\widehat{M}$ of $H_\vp$ by setting $\pi_\varphi(a)(\hat{b}):=\widehat{ab}$.  It is then easy to see that $\pi_\vp:M\to B(H_\vp)$ is a standard representation of $M$ with cyclic and separating vector $\om_\vp:=\hat{1}$.  We note also that the topology on $M$ induced by $\|\cdot\|_\vp$ agrees with the strong topology on operator norm bounded sets and, moreover, that the associated metric is complete on such sets.


Until further notice, fix a standard form $(M,\om)$ of $M$.  In general, for $a\in M$, the right multiplication map $b\om\mapsto ba\om:M\om\to M\om$ does not extend to a bounded operator on $H$.  Thus, we say that $a \in M$ is \textbf{right $K$-bounded} if, for all $b \in M$, we have $\|ba\|\leq K\|b\|$.  Note that some sources in the literature would call such an element right $\sqrt{K}$-bounded.  Note also that the notion of bounded element depends on the choice of standard form of $M$.  If $a$ is right $K$-bounded, then the above map extends to a bounded operator on $H$, which we denote by $\pi'(a)$; in this case, we set $\|a\|_{\rt}:=\|\pi'(a)\|$ and note that $\|a\|_{\rt}\leq K$. In a slight abuse of notation, for $\xi\in H$ we will sometimes write $\|\pi'(\xi)\|\leq K$ if $M\om\ni a\om\mapsto a\xi$ extends to a $K$-bounded operator on $H$. In this way we have $\pi'(a) = \pi'(a\om)$. We say that $a\in M$ is \textbf{right bounded} if it is right $K$-bounded for some $K$; we let $M_{\rt}$ denote the set of right bounded elements of $M$.  

Note that if $a\in M_{\rb}$, then $\pi'(a)\in M'$.  More generally, we have:

\begin{lem}\label{boundedlemma}
For $v\in H$, we have that $v\in M'\om$ if and only if the map $$b\om\mapsto bv:M\om\to H$$ extends to a bounded operator on $H$.  In particular, $a\in M$ is right-bounded if and only if $a\om\in M'\om$, in which case $\pi'(a)$ is the unique $z\in M'$ such that $a\om=z\om$.
\end{lem}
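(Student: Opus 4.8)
The plan is to prove the stated equivalence by checking its two implications directly and then to read off the ``in particular'' clause, including the uniqueness assertion, as a formal consequence.

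For the easy direction, suppose $v = z\om$ for some $z \in M'$. Since $z$ commutes with the left action of $M$ on $H$, for every $b \in M$ we have $bv = b(z\om) = z(b\om)$; hence the map $b\om \mapsto bv$ is nothing but the restriction of the bounded operator $z$ to the dense subspace $M\om$, and so it extends boundedly (to $z$ itself). For the converse, assume the map $b\om \mapsto bv$ extends to a bounded operator $T \in B(H)$. Note first that this map is genuinely well defined on $M\om$: because $\om$ is separating for $M$, $b\om = b'\om$ forces $b = b'$ and hence $bv = b'v$. I then claim $T \in M'$. To see this, fix $a \in M$; for every $b \in M$ one computes $T(a(b\om)) = T((ab)\om) = (ab)v = a(bv) = a(T(b\om))$, so $T$ and (the left action of) $a$ agree on the dense subspace $M\om$. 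Since both operators are bounded, $Ta = aT$ on all of $H$, and as $a \in M$ was arbitrary, $T \in M' $. Evaluating the defining relation at $b = 1$ gives $T\om = v$, whence $v = T\om \in M'\om$.

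It remains to deduce the concrete statement about right-bounded elements. Given $a \in M$, the right-multiplication map $b\om \mapsto ba\om = b(a\om)$ is precisely the map of the lemma with $v := a\om$; thus $a$ is right-bounded if and only if this map extends boundedly, if and only if $a\om \in M'\om$, and in that case the bounded extension is by definition $\pi'(a)$, which by the previous paragraph lies in $M'$ and satisfies $\pi'(a)\om = a\om$. Finally, if $z_1, z_2 \in M'$ both satisfy $z_i\om = a\om$, then $(z_1 - z_2)\om = 0$; since $\om$ is cyclic for $M$, any element of $M'$ annihilating $\om$ vanishes on the dense set $M\om$ and is therefore $0$, so $z_1 = z_2$. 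This establishes uniqueness and identifies $\pi'(a)$ with that unique element.

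I do not expect a serious obstacle here: the argument is essentially bookkeeping with the standard-form data. The two points that require a moment's care are the well-definedness of the map $b\om \mapsto bv$ (which is exactly where the separating property of $\om$ for $M$ is used) and the passage in the commutation argument from the dense subspace $M\om$ to all of $H$, which relies on boundedness of both operators together with cyclicity of $\om$.
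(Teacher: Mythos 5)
Your proof is correct and follows essentially the same route as the paper: the forward direction identifies the map with the restriction of $z\in M'$, and the converse evaluates the bounded extension at $\om$ and verifies membership in $M'$ by commuting with left multiplication on the dense subspace $M\om$. The extra details you supply (well-definedness via the separating property, and the explicit uniqueness argument using that $\om$ is separating for $M'$) are sound and merely make explicit what the paper leaves implicit.
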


\begin{proof}
First suppose that $v\in M'\om$.  Then $v=x\om$ for some $x\in M'$.  For $b\in M$, it follows that $bv=bx\om=xb\om$, whence the map $b\om\mapsto bv$ is the restriction of the bounded operator $x$ to $M\om$.

Conversely, suppose that the map $b\om\mapsto bv:M\om\to H$ extends to the bounded operator $x$ on $H$.  Then $v=1\cdot v=x(1\om)=x\om$.  It suffices to show that $x\in M'$.  To see this, take $a,b\in M$ and note $(xa)(b\om)=x(ab\om)=abv=ax(b\om)$; since $M\om$ is dense in $H$, it follows that $ax=xa$.  Since $a\in M$ is arbitrary, it follows that $x\in M'$.

The final statement follows immediately from the first statement and its proof.
\end{proof}



Continuing with the assumption that $(M,\om)$ is a standard form of $M$, we say $x\in M$ is \textbf{totally $K$-bounded} if $\|x\| \leq K$ and both $x$ and $x^*$ are right $K$-bounded.  We say that $x\in M$ is \textbf{totally bounded} if $x$ is totally $K$-bounded for some $K$.  We will denote the set of totally right bounded elements by $M_{\tb}$.  

\begin{notation}
For any $X \subseteq M$, we let $S_K(X)$ denote the set of totally $K$-bounded elements of $X$.
\end{notation}

Now suppose that $(M,\vp)$ is a \wstar-probability space, by which we mean a von Neumann algebra equipped with a faithful, normal state $\vp$.  We always associate to such a space the standard form $(M,\om_\vp)$ stemming from the GNS construction as discussed above and any mention of (totally) bounded elements are relative to this standard form on $M$.

The following fact is well-known to experts and follows, for instance, from \cite[Chapter VIII, Theorem 3.17]{Takesaki}. 

\begin{prop}\label{phidense}
Suppose that $(M,\varphi)$ is a \wstar-probability space.  Then $M_{\tb}$ is strongly dense in $M$.
\end{prop}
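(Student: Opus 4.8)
The plan is to produce, for a given $a \in M$, a net of totally bounded elements converging to $a$ strongly, by smearing $a$ along the modular automorphism group. Concretely, for each $n \in \mathbb N$ set
\[
a_n := \sqrt{\frac{n}{\pi}} \int_{\mathbb R} e^{-n t^2} \, \sigma_t^\vp(a) \, dt,
\]
the integral taken in the $\sigma$-weak sense, where $\sigma_t^\vp$ is the modular automorphism group of $\vp$. These are the entire analytic elements for $\sigma^\vp$ (in Takesaki's terminology, elements $x$ for which $t \mapsto \sigma_t^\vp(x)$ extends to an entire $M$-valued function), and it is classical that each $a_n$ has this property, that $\|a_n\| \leq \|a\|$, and that $a_n \to a$ $\sigma$-strongly as $n \to \infty$ — this is exactly the content of \cite[Chapter VIII, Theorem 3.17]{Takesaki} (or can be extracted from it). Since the $\|\cdot\|_\vp$-topology agrees with the strong topology on operator norm bounded balls, and the $a_n$ are uniformly operator-norm bounded, it follows that $a_n \to a$ in $\|\cdot\|_\vp$ as well. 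So the only thing left to check is that an entire analytic element is totally bounded.

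For that, recall the KMS condition, which says that for $x$ analytic and $b \in M$ one has $\vp(b x) = \vp(\sigma_{-i}^\vp(x) b) = \vp(x \sigma_{i}^\vp(x^{-1}\cdots))$ — more usefully, $\vp(b x) = \vp((\sigma_{-i}^\vp x) b)$ and $\vp(x b) = \vp(b\, \sigma_{i}^\vp(x))$. Using this, for $b \in M$,
\[
\|b x\|_\vp^2 = \vp(x^* b^* b x) = \vp\bigl((\sigma_{-i}^\vp x)\, x^* b^* b\bigr)
\]
after one application of the KMS identity; bounding $\vp(y^* c)$ by $\|y\|_\vp \|c\|_\vp$ via Cauchy–Schwarz and then estimating operator-norm factors, one gets $\|bx\|_\vp \leq C_x \|b\|_\vp$ for a constant $C_x$ depending only on $\sup_{|\mathrm{Im}\, z| \leq 1}\|\sigma_z^\vp(x)\|$ and $\|x\|$. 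By Lemma~\ref{boundedlemma} this says $x$ is right bounded; applying the same computation to $x^*$ (which is also analytic) shows $x^*$ is right bounded, so $x$ is totally bounded. Hence each $a_n$ lies in $M_{\tb}$, and $M_{\tb}$ is strongly dense.

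The main obstacle is getting the KMS-type estimate $\|bx\|_\vp \leq C_x \|b\|_\vp$ cleanly: one has to be careful that the analytic continuation $\sigma_z^\vp(x)$ is genuinely an element of $M$ for $z$ in a strip (true for entire analytic elements, so no issue for the Gaussian-smeared $a_n$) and that the manipulation $\vp(x^* b^* b x) = \vp((\sigma_{-i}^\vp x) x^* b^* b)$ is justified — this is the analyticity/KMS input, and it is where the choice of smearing that produces \emph{entire} analytic elements (rather than merely analytic-in-a-strip ones) pays off. Everything else — the convergence $a_n \to a$, the norm bound, the reduction from strong to $\|\cdot\|_\vp$ convergence — is routine given the references already cited.
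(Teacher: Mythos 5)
Your proposal is correct in outline, but it takes a genuinely different route from the paper's. What you describe — Gaussian smearing along $\sigma^\vp_t$ to produce entire analytic elements, which are then shown to be totally bounded — is precisely the ``well-known to experts'' argument that the paper attributes to \cite[Chapter VIII, Theorem 3.17]{Takesaki} in the sentence immediately preceding the proposition. The paper's own proof is deliberately more elementary and avoids modular theory entirely (which has not yet been introduced at that point in the text): given a unitary $u$, one applies the Jordan decomposition to the hermitian normal functionals $K\vp - \vp(u^*\cdot u)$ to produce projections $p_K \nearrow 1$ with $\vp(u^*p_Kxp_Ku)\leq K\vp(p_Kxp_K)$ for $x\geq 0$, and then checks by hand that $p_Ku$ and $u^*p_K$ are right $\sqrt{K}$-bounded. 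Your approach buys a stronger conclusion (a dense $*$-subalgebra of entire elements with quantitative bounds $\|x\|_{\rt}\leq\sup_{|\operatorname{Im}z|\leq 1/2}\|\sigma^\vp_z(x)\|$, which the paper itself uses later in Facts \ref{modularsubspaces}); the paper's approach buys self-containedness and only standard von Neumann algebra facts.

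One concrete repair is needed in your key estimate. As written, the identity $\|bx\|_\vp^2=\vp(x^*b^*bx)=\vp\bigl((\sigma_{-i}^\vp x)\,x^*b^*b\bigr)$ followed by a single Cauchy--Schwarz does \emph{not} close: writing $\vp\bigl((\sigma_{-i}^\vp x)\,x^*b^*b\bigr)=\langle b\om, bx\sigma_{-i}^\vp(x)^*\om\rangle$ and estimating the second factor leaves you with the \emph{operator} norm of $b$, not $\|b\|_\vp$, so you get $\|bx\|_\vp^2\leq C\|b\|\,\|b\|_\vp$ rather than $C_x\|b\|_\vp^2$. The correct mechanism uses the half-shift rather than the full shift: for $x$ entire analytic one has $Jx\om=\sigma^\vp_{i/2}(x)^*\om$, hence $\pi'(x)=J\pi_\vp(\sigma^\vp_{i/2}(x))^*J\in M'$ and $\|x\|_{\rt}=\|\sigma^\vp_{i/2}(x)\|$, which is exactly the computation carried out in the proof of Facts \ref{modularsubspaces}(4). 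With that substitution (and the observation that $x^*$ is again entire with $\sigma^\vp_z(x^*)=\sigma^\vp_{\bar z}(x)^*$, while $\|a_n\|\leq\|a\|$ supplies the operator norm bound), your argument goes through.
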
  

Since the previous proposition is crucial for our model-theoretic applications, we furnish a short, self-contained proof of this fact. Before proving Proposition \ref{phidense}, we need a lemma:

\begin{lem} Let $M$ be a von Neumann algebra and $\vp$ and $\psi$ be two faithful, normal states on $M$. Then there is an increasing sequence $p_K$ of projections strongly converging to $1$ so that $\vp(p_Kxp_K)\leq K\psi(p_K x p_K)$ for all $x\in M_+$. 

\end{lem}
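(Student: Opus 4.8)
The plan is to produce the projections from the Jordan (Hahn) decompositions of the Hermitian normal functionals $\rho_C := C\psi - \varphi$, $C \in \mathbb{N}$. Writing $\rho_C = (\rho_C)_+ - (\rho_C)_-$ for the Jordan decomposition into orthogonal positive normal functionals, let $e_C \in M$ be the support projection of $(\rho_C)_+$. Orthogonality of the decomposition puts the support of $(\rho_C)_-$ under $1 - e_C$, so that $\rho_C(e_C x e_C) = (\rho_C)_+(x) \ge 0$ and $\rho_C((1-e_C)x(1-e_C)) = -(\rho_C)_-(x) \le 0$ for all $x \in M_+$. The first inequality says exactly that $\varphi(e_C x e_C) \le C\psi(e_C x e_C)$, while the second, evaluated at $x = 1$, gives $C\psi(1-e_C) \le \varphi(1-e_C) \le 1$, i.e.\ $\psi(1-e_C) \le 1/C$. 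I would also record the trivial heredity to be used repeatedly: if $p \le q$ are projections and $\varphi(qxq) \le C\psi(qxq)$ for all $x \in M_+$, then the same bound holds for $p$, since $(pMp)_+ \subseteq (qMq)_+$.

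The real work is to upgrade the $e_C$ to an \emph{increasing} sequence converging strongly to $1$. One cannot take the $e_C$ themselves: the positive spectral subspace of $C\psi - \varphi$ rotates as $C$ grows, so the $e_C$ are in general pairwise incomparable (this already happens for $2 \times 2$ matrices). Nor can one monotonize by forming joins, because $\psi$ is not a trace and hence not subadditive on joins of projections — so a join of projections with $\psi$-small complement may have $\psi$-large complement — and, relatedly, the family of projections satisfying the inequality for a fixed $C$ is not closed under joins. Overcoming this is the crux of the lemma.

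To locate genuinely increasing projections I would compare $\varphi$ and $\psi$ through their Radon--Nikodym data, guided by the tracial model: if $\psi$ were a trace then $\varphi = \psi(h\,\cdot\,)$ for a positive self-adjoint $h$ affiliated with $M$, and $p_K := \chi_{[0,K]}(h) \in M$ is automatically increasing, tends to $1$, and satisfies $\varphi(p_K x p_K) \le K\psi(p_K x p_K)$ by the trace property. For general $\psi$ I would run the analogue in the GNS space of $\omega := \varphi + \psi$ with cyclic separating vector $\Omega$: as $\varphi \le \omega$ there is $T' \in M'$ with $0 \le T' \le 1$, $\varphi(\,\cdot\,) = \langle \,\cdot\, T'\Omega, \Omega\rangle$ and $\psi(\,\cdot\,) = \langle\, \cdot\,(1-T')\Omega, \Omega\rangle$. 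The spectral projections $E_K := \chi_{[0, K/(K+1)]}(T') \in M'$ increase to $1$ (faithfulness of $\psi$ forces $T'$ to have no eigenvalue $1$), and the factorization $T'^{1/2} = c\,(1-T')^{1/2}$, with $c = T'^{1/2}(1-T')^{-1/2} \in M'$ of norm $\le \sqrt{K}$ on $E_K H$, shows that any projection $p \in M$ with $p\Omega \in E_K H$ satisfies $\varphi(pxp) \le K\psi(pxp)$. The hard part — and where non-traciality genuinely bites, since the relevant derivative $T'$ lives in $M'$ rather than $M$ — is to convert this increasing family of $M'$-spectral subspaces into an increasing sequence of \emph{$M$-projections} exhausting $1$. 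The natural candidate is the largest projection whose GNS vector lies in $E_K H$, namely $p_K := 1 - [M'(1-E_K)\Omega]$, where $[M'\xi] \in M$ denotes the projection onto $\overline{M'\xi}$; since $\overline{M'(1-E_K)\Omega}$ decreases, the $p_K$ are increasing, and they visibly recover the correct truncation in the abelian and tracial models. The decisive point left to verify is $\psi(1 - p_K) \to 0$, which I would attack by playing the Jordan bound $\psi(1-e_K) \le 1/K$ of the first paragraph against the faithfulness of $\psi$.
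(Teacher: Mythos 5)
Your first paragraph is, almost verbatim, the paper's own proof: the paper sets $\theta_K := K\psi - \varphi$, invokes the Jordan decomposition to get a projection $p_K$ with $\varphi(p_Kxp_K)\leq K\psi(p_Kxp_K)$, and (implicitly, by evaluating at $p_K^\perp$) gets $\psi(1-p_K)\leq 1/K$. Where you diverge is the monotonicity, and here your skepticism is in fact warranted: the paper simply asserts ``the sequence $(p_K)$ is increasing in $K$'' without proof and deduces strong convergence from $\bigvee_K p_K = 1$, whereas you claim the Jordan projections rotate. You are right. In $M_2(\mathbb{C})$ take $\psi = \Tr(k\,\cdot)$ with $k = \operatorname{diag}(0.9,\,0.1)$ and $\varphi = \Tr(h\,\cdot)$ with $h = \frac{1}{2}\left(\begin{smallmatrix} 1 & 0.98 \\ 0.98 & 1 \end{smallmatrix}\right)$: both are faithful states, the densities $Kk-h$ are invertible with signature $(1,1)$ at $K=1,2$, so the Jordan projections are forced to be the positive spectral projections, which are distinct rank-one (hence incomparable) projections at $K=1$ and $K=2$. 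So your critique exposes a genuine defect in the paper's proof of the ``increasing'' clause, and your observations that joins cannot repair it (no subadditivity of $\psi$ over joins) are also correct.

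However, your own route does not close the statement either, and the gap is not mere bookkeeping. The decisive claim $\psi(1-p_K)\to 0$ for $p_K := 1 - [M'(1-E_K)\Omega]$ is exactly where all the difficulty sits, and smallness of the vector $(1-E_K)\Omega$ gives \emph{no} control on the cyclic projection $[M'(1-E_K)\Omega]$: for any vector $\xi$ separating for $M$ (equivalently cyclic for $M'$) one has $[M'\xi]=1$ no matter how small $\|\xi\|$ is, and nothing in your construction rules out that $(1-E_K)\Omega$ remains separating for every $K$, in which case every $p_K=0$. ``Playing the Jordan bound against the faithfulness of $\psi$'' is a hope, not an argument, so the proposal as written proves neither the monotone statement (whose truth is settled by neither your attempt nor the paper's argument) nor, as you leave things, even the weaker version. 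This is doubly unfortunate because your first paragraph already contains everything the paper actually uses downstream: from $\psi(1-e_C)\leq 1/C$ and the fact, recorded in Section \ref{SectionPrelim}, that $\|\cdot\|_\psi$ induces the strong topology on operator-norm-bounded sets, the Jordan projections $e_C$ themselves converge strongly to $1$ with no monotonicity whatsoever, and the lemma's sole application (Proposition \ref{phidense}) uses only strong convergence. Had you cashed in that observation and flagged ``increasing'' as unproved (or as repairable only by some genuinely new idea), your write-up would have been a correct and even sharpened account of what the paper establishes.
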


\begin{proof} Set $\theta_K := K\psi - \vp$. Each $\theta_K$ is a hermitian, normal linear functional on $M$, whence by the Jordan decomposition \cite[Chapter III, Theorem 4.2]{Takesaki} there is a projection $p_K$ such that \[\theta_{K,+}(x) := \theta_K(p_K x p_K) 
\textrm{ and } \theta_{K,-}(x) := -\theta_K(p_K^\perp x p_K^\perp)\] are positive and $\theta_K = \theta_{K,+} - \theta_{K,-}$. For $K\geq 1$, $p_K\not=0$ and the sequence $(p_K)$ is increasing in $K$. Set $p := \bigvee_K p_K$.  Assume, towards a contradiction, that $p\not= 1$. Then we have that $\theta_K(p^\perp) = -\theta_{K,-}(p^\perp)\leq 0$ whence $\vp(p^\perp)\geq K\psi(p^\perp)$ for all $K$, which is absurd since $\psi$ is faithful.
\end{proof}

\begin{proof}[Proof of Proposition \ref{phidense}]
Fix a unitary $u\in M$; it suffices to show that $u$ is the strong limit of a sequence of right bounded elements of $M$.  By the previous lemma, there is a sequence $p_K$ of projections converging strongly to $1$ so that, for all $x\in M_+$, we have $\vp(u^*p_Kxp_Ku)\leq K\vp(p_Kxp_K)$.  Since the sequence $p_Ku$ converges strongly to $u$, it suffices to check that each $p_Ku$ is right $\sqrt{K}$-bounded.  Towards that end, fix $x\in M$ and observe that
$$\|xp_Ku\|_\vp^2=\vp(u^*p_Kx^*xp_Ku)\leq K\vp(p_Kx^*xp_K)\leq K\|x\|_\vp^2,$$
whence $p_Ku$ is $\sqrt{K}$-bounded.  

It remains to check that $(p_Ku)^*=u^*p_K$ is also right $\sqrt{K}$-bounded.  To see this, fix $x\in M$ and observe that
$$\|xu^*p_K\|_\vp^2=\|xu^*p_Kuu^*\|_\vp^2\leq \|xu^*p_Ku\|_\vp^2\leq K\|xu^*\|_\vp^2\leq K\|x\|_\vp^2.$$
This completes the proof. \qedhere
\end{proof}

If $(M,\varphi)$ is a \wstar-probability space, we also consider the norm $\|\cdot\|_\vp^\#$ on $M$ given by

\[
\|a\|_\vp^\#:=\sqrt{\frac{\vp(a^*a+aa^*)}{2}}=\sqrt{\frac{\|a\|_\vp^2+\|a^*\|_\vp^2}{2}}.
\]

Note that when $\vp$ is a trace, then $\|\cdot\|_\vp=\|\cdot\|_\vp^\#$.  It is clear that $\|a^*\|_\vp^\#=\|a\|_\vp^\#$ and $\|a\|_\vp^\#\leq \|a\|$ for all $a\in M$.  Since the norm $\|\cdot\|_\vp$ induces the strong topology on operator norm bounded subsets of $M$, it follows that the norm $\|\cdot\|_\vp^\#$ induces the strong-* topology on operator norm bounded subsets of $M$ and that such sets are strong-* closed.

\begin{lem}\label{statecont}
For all $a\in M$, we have $|\vp(a)|\leq \sqrt{2}\|a\|_\vp^\#$.
\end{lem}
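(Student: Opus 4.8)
The plan is to reduce this to the Cauchy--Schwarz inequality in $H_\vp$ together with the elementary comparison between $\|\cdot\|_\vp$ and $\|\cdot\|_\vp^\#$. First, recall that since $\vp$ is a state we have $\vp(1)=1$, so $\|\om_\vp\|_\vp = \|\hat 1\|_\vp = 1$, and that $\vp(a) = \ip{\hat a}{\hat 1}_\vp$ directly from the definition $\ip{b}{c}_\vp = \vp(c^*b)$ applied with $c=1$.

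Applying Cauchy--Schwarz in $H_\vp$ then yields
\[
\ab{\vp(a)}^2 = \ab{\ip{\hat a}{\hat 1}_\vp}^2 \leq \|\hat a\|_\vp^2\,\|\hat 1\|_\vp^2 = \vp(a^*a).
\]
Finally, since $\vp(aa^*) \geq 0$ one has $\vp(a^*a) \leq \vp(a^*a) + \vp(aa^*) = 2\bigl(\|a\|_\vp^\#\bigr)^2$, and taking square roots gives $\ab{\vp(a)} \leq \sqrt 2\,\|a\|_\vp^\#$.

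There is essentially no obstacle here: the content is just that $\|\cdot\|_\vp \leq \sqrt 2\,\|\cdot\|_\vp^\#$ on all of $M$, combined with the standard estimate $\ab{\vp(a)} \leq \|a\|_\vp$ coming from Cauchy--Schwarz. One could equally bypass the Hilbert space picture and argue purely with the state, using the Cauchy--Schwarz inequality for the positive sesquilinear form $(b,c)\mapsto \vp(c^*b)$ to get $\ab{\vp(a)}^2 \leq \vp(a^*a)\vp(1) = \vp(a^*a)$, and then concluding as above.
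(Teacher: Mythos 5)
Your argument is correct and is exactly what the paper intends: its entire proof is the single line ``follows immediately from Cauchy--Schwarz,'' and you have simply written out the two standard steps, namely $\ab{\vp(a)}\leq\|a\|_\vp$ via Cauchy--Schwarz and $\|a\|_\vp^2\leq 2\bigl(\|a\|_\vp^\#\bigr)^2$ from positivity of $\vp(aa^*)$. No differences to report.
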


\begin{proof}
The statement follows immediately from Cauchy-Schwarz.
\end{proof}


\begin{lem}\label{hashtagfacts}
Suppose that $(M,\vp)$ is a \wstar-probability space.  Then for any $K$, the strong and strong* topologies agree on totally $K$-bounded sets.
\end{lem}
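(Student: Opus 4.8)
\emph{Proof proposal.}

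The plan is to reduce the statement to a single norm inequality and then obtain the topological conclusion by soft arguments. The inequality I would aim for is: \emph{if $a\in M$ is totally $K$-bounded, then $\|a^*\|_\vp^2\le K\|a\|_\vp$.} Assuming this, the lemma falls out quickly. Since totally $K$-bounded elements have operator norm at most $K$, on $S_K(M)$ the strong topology is the topology of $\|\cdot\|_\vp$ and the strong$^*$ topology is the topology of $\|\cdot\|_\vp^\#$, as recalled above; and $\|x\|_\vp\le\sqrt2\,\|x\|_\vp^\#$ (since $\|x\|_\vp^2\le\|x\|_\vp^2+\|x^*\|_\vp^2=2(\|x\|_\vp^\#)^2$) shows one of the two inclusions of topologies, in fact on all of $M$. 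For the reverse inclusion on $S_K(M)$, I would note that $x,y\in S_K(M)$ forces $x-y$ to be totally $2K$-bounded ($\|x-y\|\le 2K$, and $\|b(x-y)\|_\vp\le\|bx\|_\vp+\|by\|_\vp\le 2K\|b\|_\vp$ for all $b$, likewise for $(x-y)^*$), so the inequality gives $\|x^*-y^*\|_\vp^2\le 2K\|x-y\|_\vp$ and hence
\[
\bigl(\|x-y\|_\vp^\#\bigr)^2=\tfrac12\bigl(\|x-y\|_\vp^2+\|x^*-y^*\|_\vp^2\bigr)\le\tfrac12\|x-y\|_\vp^2+K\|x-y\|_\vp .
\]
This makes the identity map $(S_K(M),\|\cdot\|_\vp)\to(S_K(M),\|\cdot\|_\vp^\#)$ (uniformly) continuous, so the two topologies agree on $S_K(M)$.

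To prove the inequality I would invoke the Tomita-Takesaki data of the standard form $(M,\om_\vp)$: let $S$ be the closure of the conjugate-linear map $a\om_\vp\mapsto a^*\om_\vp$ on $M\om_\vp$, with polar decomposition $S=J\Delta^{1/2}$, and let $F:=S^*=J\Delta^{-1/2}$, the closure of $x'\om_\vp\mapsto (x')^*\om_\vp$ on $M'\om_\vp$. Two observations do the work. First, for any $a\in M$ one has $\hat a\in D(\Delta^{1/2})$ and, $J$ being isometric, $\|a^*\|_\vp=\|\widehat{a^*}\|=\|S\hat a\|=\|\Delta^{1/2}\hat a\|$. Second, if $a$ is totally $K$-bounded then, $a^*$ being right $K$-bounded, Lemma~\ref{boundedlemma} identifies $\widehat{a^*}$ with $\pi'(a^*)\om_\vp$ for a unique $\pi'(a^*)\in M'$ of norm $\le K$, so $\widehat{a^*}\in D(F)$ with $\|F\widehat{a^*}\|=\|\pi'(a^*)^*\om_\vp\|\le K$ (using $\|\om_\vp\|=1$); that is, $\widehat{a^*}\in D(\Delta^{-1/2})$ and $\|\Delta^{-1/2}\widehat{a^*}\|\le K$. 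Applying $J$ to $\widehat{a^*}=J\Delta^{1/2}\hat a$ and using the modular relations (in the form $J\,D(\Delta^{-1/2})=D(\Delta^{1/2})$ and $\Delta^{1/2}J=J\Delta^{-1/2}$) then forces $\hat a\in D(\Delta)$ with $\Delta\hat a=J\Delta^{-1/2}\widehat{a^*}$, whence $\|\Delta\hat a\|\le K$. Writing $\mu$ for the scalar spectral measure of $\hat a$ with respect to $\Delta$, we have $\int d\mu=\|a\|_\vp^2$, $\int t\,d\mu=\|\Delta^{1/2}\hat a\|^2=\|a^*\|_\vp^2$ and $\int t^2\,d\mu=\|\Delta\hat a\|^2\le K^2$, and Cauchy-Schwarz yields $\|a^*\|_\vp^2=\int t\,d\mu\le(\int d\mu)^{1/2}(\int t^2\,d\mu)^{1/2}\le K\|a\|_\vp$, as wanted.

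The genuinely delicate point is the middle step of the inequality: extracting $\|\Delta\hat a\|\le K$ from the hypothesis that $a^*$ is right $K$-bounded. This is exactly where the two-sided nature of ``totally bounded'' is needed, and it is, in spirit, a small fragment of the bounded approach to modular theory of Rieffel and Van Daele; right $K$-boundedness of $a$ by itself gives no control whatsoever on the ratio $\|a^*\|_\vp/\|a\|_\vp$, as one sees already with a matrix unit in a matrix algebra carrying a non-tracial state. Everything else --- the Cauchy-Schwarz estimate, and the soft deduction of the topological statement from the norm inequality --- is routine.
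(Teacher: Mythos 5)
Your proof is correct, and your reduction of the lemma to the single inequality $\|a^*\|_\vp^2\le K\|a\|_\vp$ for totally $K$-bounded $a$ (applied to differences, which are totally $2K$-bounded) is in substance the same skeleton as the paper's, namely a comparison of $\|\cdot\|_\vp$ and $\|\cdot\|_\vp^\#$ on $S_K(M)$. Where you genuinely diverge is in how that inequality is obtained. The paper gets it in one line with no modular theory at all: by Cauchy--Schwarz against the cyclic vector, $\|a^*\|_\vp^2=\vp(aa^*)=\langle\widehat{aa^*},\om\rangle\le\|aa^*\|_\vp\,\|\om\|=\|a\cdot a^*\|_\vp\le\|a^*\|_{\rt}\,\|a\|_\vp$, which is exactly the definition of right $K$-boundedness of $a^*$; the companion estimate $\|a\|_\vp^2\le\|a^*\|\,\|a\|_\vp$ handles the other summand of $(\|\cdot\|_\vp^\#)^2$. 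You instead route the argument through $S=J\Delta^{1/2}$ and $F=J\Delta^{-1/2}$, showing $\hat a\in D(\Delta)$ with $\|\Delta\hat a\|\le K$ and then interpolating via the spectral measure. All of your steps check out --- note that $\Delta\hat a=F(a^*\om)=\pi'(a^*)^*\om$, so your bound $\|\Delta\hat a\|\le K$ is the same fact about $\pi'(a^*)$ in disguise --- but the machinery is considerably heavier than needed for this lemma. What your version buys is a cleaner conceptual statement (total $K$-boundedness controls $\|\Delta\hat a\|$, and $\|\Delta^{1/2}\hat a\|^2\le\|\hat a\|\,\|\Delta\hat a\|$ by spectral Cauchy--Schwarz), and your handling of the non-homogeneity of the estimate --- working with the squared form and passing to differences with constant $2K$ --- is actually more careful than the paper's final display, which states the conclusion as $\|a\|_\vp^\#\le\sqrt K\,\|a\|_\vp$ when the chain of inequalities really yields $(\|a\|_\vp^\#)^2\le K\|a\|_\vp$; either form suffices for the topological conclusion.
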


\begin{proof}
    We show that on totally $K$-bounded subsets, the norms $\|\cdot\|_{\vp}$ and $\|\cdot\|_{\vp}^\#$ are equivalent. 
    By definition, $\|a\|_{\vp}^\# \geq \frac{1}{\sqrt{2}} \|a\|_{\vp}$ for all $a\in M$. Conversely, if $a$ is totally $K$-bounded, then 
    \[
    \|a\|_{\vp}^\# \leq \sqrt{\frac{\|a^*\|\|a\|_{\vp} + \|a^*\|_{\rt}\|a\|_{\vp}}{2}} \leq \sqrt{K}\|a\|_{\vp}.
    \]
\end{proof}


As stated in the introduction, when treating a \wstar-probability space as a many-sorted structure, our intended sorts are the $S_K(M)$'s.  We will equip these sorts with the metric stemming from the norm $\|\cdot\|_\vp^\#$.  Consequently, we will need to know that our sorts are complete with respect to this metric, a fact we now prove.  

First, we need to introduce some important operators associated to any standard form.  Suppose that $(M,\om)$ is a standard form of $M$.  One then defines the unbounded operators $S_0:M\om\to H$ and $F_0:M'\om\to H$ by $S_0(a\om):=a^*\om$ and $F_0(b\om):=b^*\om$ for $a\in M$ and $b\in M'$.  An important fact about these unbounded operators are that they are closable \cite[II, Lemma 1.5]{Takesaki}; we set $S:=\overline{S_0}$ and $F:=\overline{F_0}$.  One also has that $S^* = F$ and $S^2 = F^2$ equals the identity on its (dense) domain. Given a W$^*$-probability space, we will denote by $S_\vp$ and $F_\vp$ the associated operators on the GNS construction.

\begin{lem}
For each $K$, the set $S_K(M)$ is $\|\cdot\|_\vp^\#$-complete.
\end{lem}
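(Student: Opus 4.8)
\emph{Proof proposal (plan).} The plan is to exploit completeness of the GNS space $H_\vp$ and then to recognize the limiting vector as a genuine totally $K$-bounded element by feeding uniform operator-norm bounds into (the two instances of) Lemma~\ref{boundedlemma}. Throughout, write $\om$ for the cyclic separating vector $\widehat 1 \in H_\vp$, so $\widehat{a}=a\om$ for $a\in M$, and recall that $\om$ is also cyclic and separating for $M'$, so that $(M',\om)$ is a standard form of $M'$.

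First I would reduce to a Hilbert-space statement. If $(x_n)$ is $\|\cdot\|_\vp^\#$-Cauchy in $S_K(M)$, then since $\|\cdot\|_\vp^\# \ge \tfrac{1}{\sqrt2}\|\cdot\|_\vp$ and $\|a^*\|_\vp^\#=\|a\|_\vp^\#$, both $(\widehat{x_n})$ and $(\widehat{x_n^*})$ are Cauchy in $H_\vp$; let $\xi:=\lim_n\widehat{x_n}$ and $\eta:=\lim_n\widehat{x_n^*}$. To identify $\xi$ with an element of $M$: for $b\in M'$ we have $b\,\widehat{x_n}=b\,\pi_\vp(x_n)\om=\pi_\vp(x_n)(b\om)$, so $\|b\,\widehat{x_n}\|\le\|\pi_\vp(x_n)\|\,\|b\om\|\le K\|b\om\|$, and letting $n\to\infty$ gives $\|b\xi\|\le K\|b\om\|$ for all $b\in M'$. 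Hence the (well-defined, since $\om$ is separating for $M'$) map $b\om\mapsto b\xi$ on $M'\om$ extends to a bounded operator of norm $\le K$, so Lemma~\ref{boundedlemma} applied to the standard form $(M',\om)$ yields $x\in (M')'=M$ with $\xi=x\om$ and $\|x\|\le K$. Moreover, since $S=\overline{S_0}$ is closed and $S\widehat{x_n}=\widehat{x_n^*}\to\eta$ while $\widehat{x_n}\to\widehat x$, we get $\widehat x\in\operatorname{dom}(S)$ with $S\widehat x=\eta$; but $\widehat x\in M\om=\operatorname{dom}(S_0)$ and $S_0\widehat x=\widehat{x^*}$, so $\eta=\widehat{x^*}$.

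It then remains to check that $x\in S_K(M)$, i.e.\ that $x$ and $x^*$ are right $K$-bounded. For $b\in M$ we have $b\,\widehat{x_n}=b\,\pi'(x_n)\om=\pi'(x_n)(b\om)$ (as $\pi'(x_n)\in M'$ commutes with $b$), hence $\|b\,\widehat{x_n}\|\le\|\pi'(x_n)\|\,\|b\om\|\le K\|b\om\|$; letting $n\to\infty$, $\|b\,(x\om)\|=\|b\xi\|\le K\|b\om\|$ for all $b\in M$. Thus $b\om\mapsto b\,(x\om)$ is bounded of norm $\le K$ on $M\om$, so by Lemma~\ref{boundedlemma} one has $x\om\in M'\om$, $x$ is right bounded, and $\pi'(x)$ is exactly the bounded extension, whence $\|\pi'(x)\|\le K$; that is, $x$ is right $K$-bounded. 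Running the identical argument with $x_n^*$ and $\eta=\widehat{x^*}$ in place of $x_n$ and $\xi$ shows $x^*$ is right $K$-bounded as well, so $x\in S_K(M)$. Finally, $\|x_n-x\|_\vp^{\#\,2}=\tfrac12\bigl(\|\widehat{x_n}-\widehat x\|^2+\|\widehat{x_n^*}-\widehat{x^*}\|^2\bigr)\to 0$, so $x_n\to x$ in $\|\cdot\|_\vp^\#$, proving completeness.

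The main obstacle is the middle step: promoting the bare $H_\vp$-limits $\xi,\eta$ to a single bona fide element $x\in M$ that simultaneously satisfies all three estimates defining $S_K(M)$. Each estimate must be extracted from a different uniform bound — $\|\pi_\vp(x_n)\|\le K$ for the operator-norm bound, and $\|\pi'(x_n)\|\le K$, $\|\pi'(x_n^*)\|\le K$ for the two right-bounds — by applying the correct (primed or unprimed) instance of Lemma~\ref{boundedlemma}, so one has to be careful about which Hilbert-space multiplication operator, and which commutant, is relevant at each stage; the closedness of the Tomita operator $S$ is what glues the $x^*$-side data to the $x$-side data.
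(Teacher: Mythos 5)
Your proposal is correct and follows essentially the same route as the paper: extract Hilbert-space limits of $\widehat{x_n}$ and $\widehat{x_n^*}$, pass the uniform bounds $\|x_n\|,\|x_n\|_{\rt},\|x_n^*\|_{\rt}\le K$ to the limit, and use closedness of $S$ to identify the adjoint. The only cosmetic difference is that the paper simply cites completeness of operator-norm-bounded balls in $\|\cdot\|_\vp$ to produce the limit $x\in M$, whereas you re-derive this via Lemma~\ref{boundedlemma} applied to the commutant; both are fine.
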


\begin{proof}
Suppose that $(a_n)_{n\in \mathbb N}$ is a $\|\cdot\|^\#_\vp$-Cauchy sequence from $S_K(M)$.  Since operator norm bounded balls are $\|\cdot\|_\varphi$-complete, there is $a\in M$ such that $a_n$ strongly converges to $a$.  We claim that $a$ is also right $K$-bounded.  To see this, take $b\in M$ and note that $ba_n$ converges strongly to $ba$ and hence $\|ba_n\|_\vp$ converges to $\|ba\|_\vp$.  Since $\|a_n\|_{\rt}\leq K$, we have that $\|ba\|_\vp=\lim_{n\to \infty}\|ba_n\|_\vp\leq K\|b\|_\vp$, whence $\|a\|_{\rt}\leq K$. 

Arguing as in the previous paragraph, there is $a' \in M$ such that $a_n^*$ strongly converges to $a'$; moreover, $\|a'\|,\|a'\|_{\rt}\leq K$. 

It remains to show that $a^* = a'$. Indeed, we have that $(a_n-a)\om\to 0$, whence by virtue of $S_\vp$ being closed, we have that $(a_n^*-a^*)\om=S_\vp(a_n-a)\om \to 0$.  It follows that $a_n^*$ strongly converges to $a^*$ and thus $a'=a^*$.
\end{proof}

We close this section with some words about embeddings between probability spaces.  An \textbf{embedding} between \wstar-probability spaces $(M,\vp)$ and $(N,\psi)$ is a unital, normal, state-preserving $^*$-homomorphism $f:M\to N$ for which there is a normal, state-preserving conditional expectation map $E:N\to M$.

The reason for this choice of embeddings comes from modular theory (see the next lemma).  Notice that in the tracial setting, every trace-preserving $^*$-homomorphism automatically induces a conditional expectation onto its image.  This corresponds to the fact that, in this setting, the modular automorphism group is trivial.

In order to show that we axiomatized the category of \wstar-probability spaces, we will need the following alternate characterization of embeddings between \wstar-probability spaces.  Note that this characterization refers to the modular automorphism group of a \wstar-probability space, defined in detail in the next section.  Throughout this paper, we let $M_{\sa}$ denote the set of self-adjoint elements of $M$.

\begin{lem}\label{expectation}
    Suppose that $(M,\vp)$ and $(N, \psi)$ are \wstar-probability spaces and $f:M\to N$ is a state-preserving $^*$-homomorphism. Then the following are equivalent:
    \begin{enumerate}
    \item $f$ is an embedding of \wstar-probability spaces.
    \item $f$ preserves distances to the sets of self-adjoint elements, that is, for all $a\in M$, we have $d_\vp(a,M_{\sa}) = d_\psi(f(a),N_{\sa})$.
    \item $f$ commutes with the modular automorphism group, that is, for every $t \in \mathbb R$ and $a\in M$, we have $f(\sigma_t^\vp(a)) = \sigma_t^\psi(f(a))$. 
    \end{enumerate}
\end{lem}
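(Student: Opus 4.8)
The plan is to establish the cycle $(1)\Rightarrow(2)\Rightarrow(3)\Rightarrow(1)$. First note that, $\vp$ being faithful, any state-preserving $^*$-homomorphism $f$ is automatically injective, unital and normal, so we may identify $(M,\vp)$ with $(f(M),\psi|_{f(M)})$: the map $\widehat a\mapsto\widehat{f(a)}$ extends to an isometry $H_\vp\hookrightarrow H_\psi$ carrying $\om_\vp$ to $\om_\psi$, and henceforth we view $H_\vp$ as the closed subspace $\overline{\widehat{f(M)}}$ of $H_\psi$, with orthogonal projection $e\in B(H_\psi)$. Under this identification $S_\psi$ extends $S_\vp$, and the closed real subspaces $K_M:=\overline{\widehat{M_{\sa}}}$ and $K_N:=\overline{\widehat{N_{\sa}}}$ --- which are standard and coincide, respectively, with $\ker(S_\vp-1)$ and $\ker(S_\psi-1)$ (a standard fact) --- satisfy $K_M\subseteq K_N$. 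Since the nearest point to a vector in a closed real subspace is its real-orthogonal projection, $d_\vp(a,M_{\sa})=\|\widehat a-P_M\widehat a\|$ and $d_\psi(f(a),N_{\sa})=\|\widehat a-P_N\widehat a\|$, where $P_M,P_N\in B(H_\psi)$ denote the real-orthogonal projections onto $K_M,K_N$. Granting this, $(1)\Rightarrow(2)$ is routine: a normal, state-preserving conditional expectation $E\colon N\to M$ maps $N_{\sa}$ into $M_{\sa}$, restricts to the identity on $M$, and is a $\|\cdot\|_\psi$-contraction by the Kadison--Schwarz inequality ($E(x)^*E(x)\le E(x^*x)$ together with $\psi\circ E=\psi$); hence for $k\in N_{\sa}$ we get $E(k)\in M_{\sa}$ and $\|a-E(k)\|_\vp=\|E(a-k)\|_\psi\le\|a-k\|_\psi$, so $d_\vp(a,M_{\sa})\le d_\psi(f(a),N_{\sa})$, while the reverse inequality holds for any state-preserving $f$ because $f(h)\in N_{\sa}$ and $\|f(a)-f(h)\|_\psi=\|a-h\|_\vp$ for all $h\in M_{\sa}$.

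The heart of the matter is $(2)\Rightarrow(3)$. From the distance formulas and $K_M\subseteq K_N$, hypothesis $(2)$ forces $\|\widehat a-P_N\widehat a\|=\|\widehat a-P_M\widehat a\|$ for every $a\in M$; since $P_M\widehat a\in K_M\subseteq K_N$ and the nearest point of $K_N$ to $\widehat a$ is unique, this gives $P_N\widehat a=P_M\widehat a$, and then $P_N|_{H_\vp}=P_M$ by density of $\widehat M$ in $H_\vp$ and contractivity. In particular $P_N(H_\vp)=K_M\subseteq H_\vp$, and since $P_N$ is a real-orthogonal projection this forces $e$ to commute with $P_N$; as $e$ also commutes with multiplication by $i$, it commutes with the real-orthogonal projection onto $iK_N$ (equivalently, onto the symplectic complement $K_N'$). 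Now I invoke the bounded-operator approach to Tomita--Takesaki theory of Rieffel and Van Daele \cite{RvD}: the modular unitaries $\Delta_\psi^{it}$ lie in the von Neumann algebra generated by the projections onto $K_N$ and $iK_N$, so $e$ commutes with every $\Delta_\psi^{it}$; hence $\Delta_\psi^{1/2}$ leaves $H_\vp$ invariant, and so does $J_\psi=S_\psi\Delta_\psi^{-1/2}$, since $J_\psi\widehat h=\Delta_\psi^{1/2}\widehat h\in H_\vp$ for $h\in M_{\sa}$ and $H_\vp$ is the closed complex span of $\widehat{M_{\sa}}$. Thus $S_\psi|_{H_\vp}$ is a closed antilinear involution on $H_\vp$ with polar decomposition $(J_\psi|_{H_\vp})(\Delta_\psi^{1/2}|_{H_\vp})$ and with $+1$-eigenspace $K_N\cap H_\vp=K_M$; by the uniqueness in the correspondence between standard real subspaces and their Tomita operators it follows that $S_\psi|_{H_\vp}=S_\vp$, whence $\Delta_\psi^{it}|_{H_\vp}=\Delta_\vp^{it}$ for all $t$. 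Finally, for $a\in M$ and $t\in\mathbb R$,
\[
\sigma_t^\psi(f(a))\,\om_\psi=\Delta_\psi^{it}\,\widehat{f(a)}=\Delta_\vp^{it}\,\widehat a=\widehat{f(\sigma_t^\vp(a))},
\]
and since $\om_\psi$ separates $N$ this yields $\sigma_t^\psi(f(a))=f(\sigma_t^\vp(a))$, which is $(3)$.

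For $(3)\Rightarrow(1)$, condition $(3)$ says exactly that $f(M)$ is globally $\sigma_t^\psi$-invariant for all $t$, so Takesaki's theorem on conditional expectations \cite{Takesaki} supplies a $\psi$-preserving normal conditional expectation $E_0\colon N\to f(M)$; then $E:=f^{-1}\circ E_0\colon N\to M$ is normal and satisfies $\vp\circ E=\psi$ and $E\circ f=\id_M$, exhibiting $f$ as an embedding of \wstar-probability spaces. The one genuinely delicate step is $(2)\Rightarrow(3)$: one must extract full modular rigidity from the distance-to-self-adjoints data, and the crucial point is that this data is precisely the real-orthogonal projection onto $\overline{\widehat{N_{\sa}}}$, which --- together with that of its symplectic complement --- determines $\Delta_\psi^{it}$; this is exactly where the Rieffel--Van Daele bounded-operator picture is the natural tool, the remaining work being bookkeeping around the inclusion $H_\vp\subseteq H_\psi$, the identity $\overline{\widehat{M_{\sa}}}=\ker(S_\vp-1)$, and the uniqueness of Tomita data for a standard subspace. (It is essential that the metric here be $\|\cdot\|_\vp$ rather than $\|\cdot\|_\vp^\#$: minimizing $\|a-h\|_\vp^\#$ over $h\in M_{\sa}$ gives $(\|a\|_\vp^{\#})^2-\|\mathrm{Re}(a)\|_\vp^2$, which is preserved by every state-preserving $^*$-homomorphism, so with the $\#$-norm $(2)$ would be vacuous.)
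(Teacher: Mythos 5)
Your proof is correct and follows the same route the paper takes: (1)$\Rightarrow$(2) via the state-preserving conditional expectation being an $L^2$-contraction, (2)$\Rightarrow$(3) via the Rieffel--Van Daele bounded-operator picture (the projection onto $\overline{\widehat{N_{\sa}}}$ determines $\Delta_\psi^{it}$), and (3)$\Rightarrow$(1) via Takesaki's theorem. The paper only cites these three steps without detail, so your write-up essentially supplies the omitted verifications; your closing remark that the implication would fail for the $\|\cdot\|_\vp^\#$-metric is a correct and worthwhile observation.
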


\begin{proof}
The direction (1) implies (2) is clear.  The direction (2) implies (3) follows from Rieffel and Van Daele's bounded operator approach to Tomita-Takesaki theory described in the next section.  The direction (3) implies (1) is a theorem of Takesaki \cite[Chapter IV, Theorem 4.2]{Takesaki}.  
\end{proof}

In connection with the previous lemma, we note that if $a \in S_K(M)$, then $d_\vp(a,M_{\sa})=d_\vp(a,b)$ for some $b\in S_{3K}(M)$; this fact will be proven in the next section.

\section{Tomita-Takesaki theory according to Rieffel and Van Daele}\label{SectionRvD}

In this section, we present the basic facts from the bounded operator approach to Tomita-Takesaki due to Rieffel and Van Daele \cite{RvD} used throughout this paper.

Fix a standard form $(M,\om)$ of $M$.  We set $\cal K:=\overline{M_{\sa}\omega}$, a closed, \emph{real} subspace of $H$.  Viewing $H$ as a real Hilbert space by taking the real part of the original inner product, we let $P:H\to H$ denote the orthogonal projection onto $\cal K$.  We furthermore set $Q:H\to H$ to be the orthogonal projection onto the closed real subspace $i\cal K$ of $H$ and set $R:=P+Q$, a positive operator.  Moreover, one considers the polar decomposition of $\Theta:=P-Q=JT$ of $P-Q$, where $J$ is a partial isometry and $T$ is a positive operator.  We list here the most important facts about these operators and spaces, whose proofs can be found in \cite{RvDc,RvD}:

\begin{facts}\label{RvDfacts}

\

\begin{enumerate}
    \item $R$ and $T$ are complex-linear operators.
    \item $J$ is an anti-linear, self-adjoint, orthogonal operator (whence $J^2=1$) satisfying $J\omega=\omega$.
    \item $R$ and $2-R$ are injective.
    \item For all $v\in H$, $iP(v)=Q(iv)$.
    \item $\overline{M'_{\sa}\omega}=i\cal K^\perp$.
    \item If $x \in M$, then $\Theta x\omega = (2 - R)x^*\omega$.
    \item If $x' \in M'$, then $\Theta x'\omega = Rx^{'*}\omega$
\end{enumerate}
\end{facts}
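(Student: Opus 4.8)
The plan is to reduce all seven statements to Hilbert-space geometry, the key being to first identify the real subspace $\mathcal{K}=\overline{M_{\sa}\omega}$ with the fixed-point set $\{\xi\in\operatorname{dom}(S):S\xi=\xi\}$, and symmetrically $\mathcal{K}':=\overline{M'_{\sa}\omega}$ with $\{\eta\in\operatorname{dom}(F):F\eta=\eta\}$. Each identification is the same two-line argument: the fixed-point set of the closed operator $S$ (closable by Tomita's lemma \cite[II, Lemma 1.5]{Takesaki}) is closed and contains $M_{\sa}\omega$, giving ``$\supseteq$''; and if $S\xi=\xi$ then $\xi$ is a graph-limit of some $x_n\omega$, so $\tfrac12(x_n+x_n^*)\omega\to\xi$ with $\tfrac12(x_n+x_n^*)\in M_{\sa}$, giving ``$\subseteq$''. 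Since $S$ is conjugate-linear, $i\mathcal{K}=\{\xi:S\xi=-\xi\}$, so $\mathcal{K}\cap i\mathcal{K}=\{0\}$ for free, while $\mathcal{K}+i\mathcal{K}\supseteq M_{\sa}\omega+iM_{\sa}\omega=M\omega$ is dense (and likewise for $\mathcal{K}'$). I would then prove (5): the inclusion $i\mathcal{K}'\subseteq\mathcal{K}^{\perp}$ is the computation $\langle h\omega,iy'\omega\rangle=-i\langle h\omega,y'\omega\rangle$ with $\langle h\omega,y'\omega\rangle\in\mathbb{R}$ (since $h\in M_{\sa}$ and $y'\in M'_{\sa}$ commute), so $h\omega\perp_{\mathbb{R}}iy'\omega$; conversely, if $v\perp_{\mathbb{R}}\mathcal{K}$ then testing against $M_{\sa}\omega$ (i.e. against $(x+x^*)\omega$ and $\tfrac1i(x-x^*)\omega$) gives $\langle v,x\omega\rangle=-\overline{\langle v,x^*\omega\rangle}$ for every $x\in M$, which is exactly the assertion that $v\in\operatorname{dom}(S_0^{*})=\operatorname{dom}(F)$ with $Fv=-v$; hence $F(iv)=iv$, i.e. $iv\in\mathcal{K}'$, i.e. $v\in i\mathcal{K}'$. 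Thus $\mathcal{K}^{\perp}=i\mathcal{K}'$.

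Granting (5), I would do (4): $iP(v)$ and $Q(iv)$ both lie in $\operatorname{ran}Q=i\mathcal{K}$, and for $\kappa\in\mathcal{K}$ one has $\langle iP(v)-Q(iv),\,i\kappa\rangle_{\mathbb{R}}=\operatorname{Re}\langle P(v)-v,\kappa\rangle=0$ since $v-P(v)\perp_{\mathbb{R}}\mathcal{K}$; hence they coincide. Feeding (4) back (with $i^2=-1$) also gives $P(i\,\cdot\,)=i\,Q(\,\cdot\,)$. Now (1): $R(iv)=iQ(v)+iP(v)=iR(v)$ so $R$ is complex-linear, and $\Theta(iv)=iQ(v)-iP(v)=-i\Theta(v)$ so $\Theta$ is conjugate-linear and (being a difference of real-orthogonal projections) self-adjoint, whence $T=(\Theta^{2})^{1/2}$ is complex-linear. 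Then (3): $Rv=0$ forces $P(v)=-Q(v)\in\mathcal{K}\cap i\mathcal{K}=\{0\}$, so $v\perp_{\mathbb{R}}(\mathcal{K}+i\mathcal{K})$ and $v=0$; and $2-R=(1-P)+(1-Q)$, where $1-P$, $1-Q$ project onto $\mathcal{K}^{\perp}=i\mathcal{K}'$ and $(i\mathcal{K})^{\perp}=\mathcal{K}'$, so $(2-R)v=0$ gives $(1-P)v=-(1-Q)v\in i\mathcal{K}'\cap\mathcal{K}'=\{0\}$, hence $v\in\mathcal{K}\cap i\mathcal{K}=\{0\}$. Finally (2): $\Theta$ is conjugate-linear, self-adjoint, injective with dense range (injectivity and dense range as for $R$), so in $\Theta=JT$ the part $T$ is positive complex-linear and $J$ is a conjugate-linear isometry of $H$ onto $H$; self-adjointness of $\Theta$ (via $\Theta^{2}=JT^{2}J^{*}=T^{2}$) forces $J$ to commute with $T$ and then $J=J^{*}$, so $J^{2}=1$; and since $\omega\in\mathcal{K}$ while $\omega\perp_{\mathbb{R}}i\mathcal{K}$ (because $\langle\omega,ih\omega\rangle=-i\varphi(h)\in i\mathbb{R}$), we get $P\omega=\omega$, $Q\omega=0$, $\Theta\omega=\omega$, hence $T\omega=\omega$ and $J\omega=\Theta\omega=\omega$.

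The last two are bookkeeping. For $x=h_{1}+ih_{2}\in M$ with $h_{j}=h_{j}^{*}$, (4) and $P(h_{j}\omega)=h_{j}\omega$ give $P(x\omega)=h_{1}\omega+iQ(h_{2}\omega)$ and $Q(x\omega)=Q(h_{1}\omega)+ih_{2}\omega$; running the same expansion on $x^{*}\omega=h_{1}\omega-ih_{2}\omega$ and subtracting yields $\Theta(x\omega)=P(x\omega)-Q(x\omega)=2x^{*}\omega-(P+Q)(x^{*}\omega)=(2-R)(x^{*}\omega)$, which is (6). For (7) the only new input is that $y'\omega\perp_{\mathbb{R}}i\mathcal{K}$ for $y'\in M'_{\sa}$ (again $\langle y'\omega,ih\omega\rangle=-i\langle y'\omega,h\omega\rangle$ with $\langle y'\omega,h\omega\rangle$ real), so $Q(y'\omega)=0$ and therefore $\Theta(y'\omega)=P(y'\omega)=(P+Q)(y'\omega)=R(y'^{*}\omega)$; extending over $M'=M'_{\sa}+iM'_{\sa}$ by conjugate-linearity of $\Theta$ and complex-linearity of $R$ gives (7).

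The one genuinely non-elementary ingredient is the closability of $S_{0}$, which powers the identification $\mathcal{K}=\{S\xi=\xi\}$ and, through it, the harder half of (5): that $\mathcal{K}$ and $i\mathcal{K}'$ are complementary, not merely orthogonal. That is the step I expect to require real work; everything afterwards is soft Hilbert-space manipulation.
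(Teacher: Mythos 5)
Your argument is correct, and every step I checked goes through: the identification $\mathcal{K}=\{\xi\in\operatorname{dom}(S):S\xi=\xi\}$, the eigenspace description $i\mathcal{K}=\{S\xi=-\xi\}$, the orthogonality computation for (5), the derivation of (4) from the real-projection characterization, and the bookkeeping for (6) and (7) (I verified the $h_1+ih_2$ expansion in (6) explicitly; both sides come out to $h_1\omega-ih_2\omega+iQ(h_2\omega)-Q(h_1\omega)$). The paper itself offers no proof of these facts --- it cites Rieffel--Van Daele --- so the relevant comparison is with that source, and there your route is genuinely different in spirit: the whole point of the Rieffel--Van Daele development is to establish $\mathcal{K}\cap i\mathcal{K}=\{0\}$, the density of $\mathcal{K}+i\mathcal{K}$, and the complementarity $(i\mathcal{K})^{\perp}=\overline{M'_{\sa}\omega}$ by \emph{bounded}-operator arguments, deliberately avoiding the unbounded conjugations $S$ and $F$, whereas you route the hard half of (5) through $\operatorname{dom}(S_0^{*})=\operatorname{dom}(F)$ and $S_0^{*}=F$, which is Tomita's fundamental lemma and carries essentially all of the analytic weight. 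That is not circular in the context of this paper, since Section 2 already quotes $S^{*}=F$ and the closability of $S_0$, $F_0$ from Takesaki; granting those, your derivation is shorter than RvD's and correctly isolates them as the only non-elementary inputs. Two small points worth tightening if you write this up: you should record explicitly that your $\mathcal{K}^{\perp}=i\mathcal{K}'$ is equivalent to the paper's statement $(i\mathcal{K})^{\perp}=\mathcal{K}'$ (because multiplication by $i$ is a real-orthogonal map and $-\mathcal{K}'=\mathcal{K}'$), and in (2) the assertion that self-adjointness forces $J$ to commute with $T$ deserves the one extra line $J T^2 J^{*}=\Theta^2=T^2$ followed by functional calculus to pass from $T^2$ to $T$ before concluding $J=J^{*}$ on the dense range of $T$.
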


Since the spectra of $R$ and $2-R$ are both contained in $(0,2)$ and the function $x\mapsto x^{it}$ is bounded and continuous on this interval, one may use functional calculus to define the operators $R^{it}$ and $(2-R)^{it}$ for all $t\in \mathbb{R}$, yielding two commuting strongly continuous one-parameter groups of unitaries of $H$.  One then defines a new strongly continuous one-parameter group of unitaries $\Delta^{it}:=(2-R)^{it}R^{-it}$. 

The notation $\Delta^{it}$ may seem strange since one has not defined the operator $\Delta$ itself; in the usual (unbounded) approach to Tomita-Takesaki theory, one indeed defines the unbounded operator $\Delta$ first and then uses (unbounded) functional calculus to define the unitary operators $\Delta^{it}$. One can describe $\Delta$ using the approach discussed above, namely $\Delta:=R^{-1}(2-R)$; the presence of the unbounded operator $R^{-1}$ renders $\Delta$ unbounded.  

It will behoove us to recall the ``standard'' approach to defining $\Delta$.  Indeed, recalling the closed, unbounded operators $S$ and $F$ introduced in the previous section, the usual definition of $\Delta$ is as the positive operator in the polar decomposition of $S$, that is $S=J\Delta^{1/2}$, where $J$ is as defined above.  Note then that $\Delta=S^*S=FS$.

We next observe how this picture looks when we switch focus from $M$ to $M'$.  Let us temporarily decorate the corresponding operators with a $'$, such as $S'$, $F'$, etc...  It is clear that $S'=F$, $F'=S$, $\Delta'=F'S'=SF=\Delta^{-1}$.  By item (5) of Facts \ref{RvDfacts}, we see that $P'=1-Q$.  Analogously, one has that $Q'=1-P$.  Consequently, $\Theta'=P'-Q'=(1-Q)-(1-P)=P-Q=\Theta$.  Note also that $R'=P'+Q'=(1-Q)+(1-P)=2-R$.  

The following is \cite[Corollary 4.4]{RvD}.

\begin{lem}\label{thetaAdjoint1}
    For every $x' \in M'$, there exists a unique $x \in M$ such that 
    \[
    \Theta x'\om = x\om \quad \text{ and } \quad \Theta (x')^*\om = x^*\om.
    \]

By interchanging $M$ and $M'$, we also have that, for every $y \in M$, there exists a unique $y' \in M'$ so that 
    \[
    \Theta y\om = y'\om \quad \text{ and } \quad \Theta y^*\om = (y')^*\om.
    \]
\end{lem}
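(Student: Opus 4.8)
The plan is to reduce the whole statement to the single assertion that $\Theta$ maps $M'\om$ into $M\om$, after which everything else drops out of Facts \ref{RvDfacts} and the fact that $\om$ is separating. Uniqueness is immediate: if $x,\tilde x\in M$ both satisfy $x\om=\Theta x'\om=\tilde x\om$, then $x=\tilde x$ since $\om$ is separating for $M$. For the adjoint compatibility, assume we know $\Theta(M'\om)\subseteq M\om$ (for an arbitrary standard form) and, given $x'\in M'$, let $x\in M$ be the unique element with $x\om=\Theta x'\om$. A short computation with the defining projections gives $\Theta^2=(P-Q)^2=P+Q-PQ-QP=(P+Q)(2-P-Q)=R(2-R)$. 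Applying Fact \ref{RvDfacts}(6) to $x$ and using $\Theta x\om=\Theta(\Theta x'\om)=\Theta^2 x'\om$, we get
\[
(2-R)\,x^*\om \;=\; \Theta x\om \;=\; \Theta^2 x'\om \;=\; R(2-R)x'\om \;=\; (2-R)(Rx'\om);
\]
since $2-R$ is injective (Fact \ref{RvDfacts}(3)) this forces $x^*\om=Rx'\om$, which is $\Theta(x')^*\om$ by Fact \ref{RvDfacts}(7) applied to $(x')^*\in M'$. Hence $x$ is the element required by the statement. The second (``interchanged'') assertion is then just the first applied to the standard form $(M',\om)$: there $\om$ is still cyclic and separating, one has $\Theta'=\Theta$, $R'=2-R$, and Facts \ref{RvDfacts}(6)--(7) exchange roles, so the only input needed is $\Theta(M\om)\subseteq M'\om$, which is the same assertion applied to that standard form.

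It remains to indicate how I would prove the key assertion $\Theta(M'\om)\subseteq M\om$; by Fact \ref{RvDfacts}(7) this is equivalent to $R(M'\om)\subseteq M\om$. From $\Delta=R^{-1}(2-R)$ one has $R(1+\Delta)=2$, so $R=2(1+\Delta)^{-1}$ is a bounded function of the modular operator; similarly, in the polar decomposition $\Theta=JT$ one computes (using $\Theta^2=R(2-R)$ together with $J\Delta J=\Delta^{-1}$) that $T=2\sqrt{\Delta}\,(1+\Delta)^{-1}$. Under the substitution $\Delta=\exp(\log\Delta)$, the corresponding eigenvalue function $2\sqrt{\lambda}/(1+\lambda)$ becomes $t\mapsto 1/\cosh(t/2)$, which is integrable on $\mathbb{R}$; hence $T=\int_{\mathbb{R}}k(t)\,\Delta^{it}\,dt$ for a suitable $k\in L^1(\mathbb{R})$, the integral converging in operator norm. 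Since $\Delta^{it}\om=\om$, for $b'\in M'$ this yields
\[
T b'\om \;=\; \int_{\mathbb{R}} k(t)\,(\Delta^{it}b'\Delta^{-it})\om\,dt \;=\; c'\om,\qquad c':=\int_{\mathbb{R}} k(t)\,\Delta^{it}b'\Delta^{-it}\,dt,
\]
and since $\Delta^{it}$ normalizes $M'$ the integrand is a norm-continuous $M'$-valued function dominated by $|k(t)|\,\|b'\|$, so $c'\in M'$. Therefore $\Theta b'\om=JTb'\om=Jc'\om=(Jc'J)\om$, which lies in $M\om$ provided $JM'J\subseteq M$.

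The hard part --- and the one genuinely non-formal ingredient --- is precisely this last point: that $\Delta^{it}$ normalizes $M$ (equivalently $M'$) and that $JM'J=M$, i.e. the bounded-operator form of the main Tomita--Takesaki theorem. This is exactly what Rieffel and Van Daele establish en route to \cite[Corollary 4.4]{RvD}, so in practice I would invoke it directly; note that restricting to self-adjoint $b'$, where one may take $k$ real and even so that $c'=(c')^*$ and $\Theta b'\om=Pb'\om$ by Facts \ref{RvDfacts}(5) and (7), still does not circumvent the need for $Jc'J\in M$. Granting this input, the reduction in the first paragraph completes the proof.
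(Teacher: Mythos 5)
The paper does not prove this lemma at all --- it is quoted verbatim as \cite[Corollary 4.4]{RvD} --- so your proposal is a genuine derivation where the paper offers only a citation, and it is essentially correct. The best part is your first paragraph: the identity $\Theta^2=(P-Q)^2=R(2-R)$, combined with Facts \ref{RvDfacts}(6),(7) and the injectivity of $2-R$, cleanly reduces the whole statement (uniqueness, the adjoint compatibility, and by the $M\leftrightarrow M'$ symmetry the second half) to the single containment $\Theta(M'\om)\subseteq M\om$, equivalently $R(M'\om)\subseteq M\om$. Two comments on how you then establish that containment. First, your route through $T=2\sqrt{\Delta}(1+\Delta)^{-1}=\int_{\mathbb R}k(t)\Delta^{it}\,dt$ and the relations $JM'J=M$, $\Delta^{it}M'\Delta^{-it}=M'$ is valid \emph{in the context of this paper}, where Theorem \ref{TT} is available as a black box from Takesaki; but be aware that it inverts the logical order of the sources: in Rieffel--Van Daele, Corollary 4.4 is an ingredient \emph{in} the proof of the main theorem (their Section 4 precedes their Section 5), not a consequence of it, so as a reconstruction of their argument this would be circular. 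Also, $t\mapsto\Delta^{it}$ and $t\mapsto\Delta^{it}b'\Delta^{-it}$ are only strongly continuous, so your integrals should be read in the weak/strong sense, with $c'\in M'$ following from the WOT-closedness of $M'$; this is routine but should be said. Second, and more efficiently: the containment you need is an immediate special case of Fact \ref{bridge}, which the paper already quotes and which Takesaki proves \emph{before} the main theorem. Indeed $R=2(1+\Delta)^{-1}=2\bigl(\Delta-(-1)\bigr)^{-1}$ and $2-R=2\bigl(\Delta^{-1}-(-1)\bigr)^{-1}$, so taking $\lambda=-1$ in Fact \ref{bridge} gives $R(M'\om)\subseteq M\om$ and $(2-R)(M\om)\subseteq M'\om$ directly, with the norm bound $\|x\|\le\|x'\|$, and with no appeal to Theorem \ref{TT} or to any Fourier representation. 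Grafting that onto your first paragraph yields a complete, short, non-circular proof of the lemma from facts already stated in the paper, which is preferable to the polar-decomposition detour.
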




For our purposes, we will need a sharpening of the previous lemma.  In what follows, for $v\in M\om$, we abuse notation by setting $\|\pi(v)\|:=\|a\|$, where $a\in M$ is the unique element of $M$ for which $v=a\om$.

\begin{lem}\label{Pbounds}
    If $x \in M'_K$ is such that $\|x\|\leq K$, then $P(x\om),Q(x\om)\in M\omega$ and $\|\pi(Px\om)\|,\|\pi(Qx\om)\|\leq 2K$.
\end{lem}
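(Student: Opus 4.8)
The plan is to exploit the two descriptions of $\Theta$ provided by Facts~\ref{RvDfacts}(6)--(7). Write $\Theta = P - Q$ and note $P = \tfrac12(R + \Theta)$, $Q = \tfrac12(R - \Theta)$, so it suffices to control $R(x\om)$ and $\Theta(x\om)$ separately. For the $\Theta$ term this is immediate: since $x \in M'$, item~(7) gives $\Theta(x\om) = R x^{*}\om$, and since $x^{*} \in M'$ with $\|x^{*}\| \le K$, we have $x^{*}\om \in H$ of norm $\le K\|\om\| = K$; as $R = P + Q$ is a sum of two orthogonal projections, $\|R\| \le 2$, so $\|\Theta(x\om)\| \le 2K$ — but more importantly we want to see $\Theta(x\om)$ lies in $M\om$, which is exactly what Lemma~\ref{thetaAdjoint1} gives: there is a unique $y \in M$ with $\Theta(x\om) = y\om$ and $\Theta(x^{*}\om) = y^{*}\om$. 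So the content is to bound $\|y\| = \|\pi(\Theta x\om)\|$, and then to bound $\|\pi(Rx\om)\|$, after which $P(x\om) = \tfrac12(R + \Theta)(x\om)$ and $Q(x\om) = \tfrac12(R - \Theta)(x\om)$ both land in $M\om$ with the stated norm control.

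First I would handle $R(x\om)$. Applying item~(7) to $x^{*}$ in place of $x$ gives $\Theta(x^{*}\om) = R x\om$, i.e. $R x \om = \Theta x^{*}\om = y^{*}\om$ in the notation above, so $R(x\om) = y^{*}\om \in M\om$ and $\|\pi(Rx\om)\| = \|y\|$. Thus \emph{both} $R(x\om)$ and $\Theta(x\om)$ are expressed through the single element $y \in M$, and the whole problem reduces to the single estimate $\|y\| \le K$ — or $\le$ some absolute multiple of $K$ that still yields the final constant $2K$. To estimate $\|y\|$ I would use that $y\om = \Theta x\om$ together with $\Theta = JT$ with $\|T\| = \|\Theta\| \le 1$ (as $P,Q$ are orthogonal projections, $\|P - Q\| \le 1$) and $J$ anti-linear isometric; hence $\|y\om\| = \|y\|_\om \le \|x\om\| \le K$, but the $\|\cdot\|_\om$-norm does not control the operator norm $\|y\|$. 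The right move is instead to use the \emph{right}-boundedness hidden in $x \in M'_K$: for any $b \in M$,
\[
\|b y \om\|^2 = \|y b \om\|^2,
\]
wait — $y \in M$ need not commute with $b$; rather one computes $b y \om = b \Theta x \om$ and uses that for $z' \in M'$, $\Theta z'\om = R z'^{*}\om$ is the relevant identity, applied after moving $b$ across. Concretely, since $x \in M'$ commutes with every $b \in M$, one has $b y \om = b \Theta x\om$; one then shows $b\Theta x \om = \Theta (bx)\om$? No — $\Theta$ does not commute with left multiplication by $M$. The correct computation: $y\om = \Theta x\om = \tfrac12(R + \Theta)x\om - \tfrac12(-R+\Theta)\dots$; the cleanest path is $\|b y\om\| = \|b\Theta x\om\|$ and, using $\Theta x\om = Rx^{*}\om$, $b\Theta x\om = bRx^{*}\om$. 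Now $R = P+Q$ and one uses $bP(x^{*}\om)$, $bQ(x^{*}\om)$ together with Fact~\ref{RvDfacts}(4) and the fact that $P$ projects onto $\overline{M_{\sa}\om}$, to see $P(x^{*}\om), Q(x^{*}\om) \in \overline{M_{\sa}\om}$ so left multiplication by $b$ is $\|b\|$-bounded \emph{on these vectors viewed in $M\om$}, giving $\|b y\om\| \le \|b\|\,\|R x^{*}\om\|$-type bounds — and $\|x^{*}\om\| \le K$ since $\|x^{*}\| \le K$. Tracking the constant from $\|R\| \le 2$ yields $\|y\| \le 2K$, and then $\|\pi(Px\om)\|, \|\pi(Qx\om)\| \le \tfrac12(\|\pi(Rx\om)\| + \|\pi(\Theta x\om)\|) \le \tfrac12(2K + 2K) = 2K$.

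The main obstacle is precisely this last point: passing from a bound on the GNS-norm $\|y\om\|$ to a bound on the operator norm $\|y\|$ of the element $y \in M$ produced by Lemma~\ref{thetaAdjoint1}. The leverage to do this is the hypothesis that $x$ is right $K$-bounded (i.e. $x \in M'_K$ with the paper's indexing), which must be converted into uniform control of $\|b\,\Theta x\om\|$ over all $b$ in the unit ball of $M$; the delicate part is that $\Theta$, $P$, and $Q$ do not commute with $M$, so one cannot simply slide $b$ past them, and instead one uses items~(6)/(7) of Facts~\ref{RvDfacts} to rewrite $\Theta x\om$ and $\Theta x^{*}\om$ as $R$ applied to vectors in $M\om$ coming from $M'$, on which left multiplication by $M$ \emph{is} the honest module action, before finally invoking $\|R\| \le 2$ and $\|x^{*}\om\| \le \|x^{*}\| = \|x\| \le K$ to collect the constant $2K$. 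Once $\|y\| \le 2K$ is in hand, the conclusion for $P(x\om)$ and $Q(x\om)$ is immediate from $P = \tfrac12(R+\Theta)$ and $Q = \tfrac12(R-\Theta)$.
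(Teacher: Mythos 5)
Your reduction of the membership statement is a nice alternative route: writing $P=\tfrac12(R+\Theta)$, $Q=\tfrac12(R-\Theta)$ and using Fact \ref{RvDfacts}(7) together with Lemma \ref{thetaAdjoint1} to get $\Theta x\om=y\om$ and $Rx\om=\Theta x^*\om=y^*\om$ for a single $y\in M$ does show cleanly that $P(x\om)$ and $Q(x\om)$ lie in $M\om$. However, the heart of the lemma is the \emph{operator-norm} bound, and there your argument has a genuine gap. You correctly observe that $\|y\om\|\leq\|x\om\|\leq K$ does not control $\|y\|$, but the proposed fix does not work either: the inequality $\|b\,\Theta x\om\|\leq\|b\|\,\|Rx^*\om\|$ is true for \emph{any} vector in place of $Rx^*\om$ (it is just $\|\pi(b)v\|\leq\|\pi(b)\|\,\|v\|$) and so carries no information about $y$; more importantly, an estimate of the form $\|by\om\|\leq C\|b\|$ bounds neither the operator norm $\|y\|=\sup_{\|v\|\leq 1}\|yv\|$ (for which you would need $\|yb'\om\|=\|b'y\om\|\leq C\|b'\om\|$ over $b'\in M'$, with the GNS norm on the right) nor the right norm (for which you would need $\|by\om\|\leq C\|b\om\|$). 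Since $P$, $Q$, $R$, $\Theta$ commute with neither $M$ nor $M'$, no amount of sliding $b$ or $b'$ across them closes this loop, and the claim that left multiplication is ``$\|b\|$-bounded on vectors in $\overline{M_{\sa}\om}$ viewed in $M\om$'' is circular: whether such a vector even lies in $M\om$ with controlled operator norm is exactly what is being proved.

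The missing ingredient is a positivity/Radon--Nikodym input that your general operator inequalities cannot supply. The paper's proof invokes the construction in Lemma 4.3 of Rieffel--Van Daele: for a \emph{positive contraction} $x'\in M'$ one can exhibit $x\in M_{\sa}$ with $0\leq x\leq 1$ and $P(x'\om)=x\om$, so the operator-norm bound is built into the construction for positive elements; one then scales, splits a self-adjoint element of $M'$ into its positive and negative parts (this Jordan decomposition is where the factor $2$ comes from), and handles $Q$ via $P=-iQi$. Without some such appeal to the positive case, the constant $2K$ in your final display is asserted rather than derived.
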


\begin{proof}
    First suppose that $x'\in M'$ is such that $0\leq x'\leq 1$.  In the proof of \cite[Lemma 4.3]{RvD}, the authors find $x\in M_{\sa}$ with $\|x\|\leq 1$ such that $P(x'\omega)=x\omega$.  Consequently, for general positive $x'\in M'$, there is $x\in M_{\sa}$ such that $\|x\|\leq \|x'\|$ satisfying $P(x'\omega)=x\omega$.

    If $x'\in M'_{\sa}$ is not necessarily positive, then we may write $x'=x_1'-x_2'$ with $x_1',x_2'\in M_{\sa}'$ positive and $\|x_i'\|\leq \|x'\|$; it follows that $$P(x'\omega)=P(x_1'\omega)-P(x_2'\omega)=x_1\omega-x_2\omega=(x_1-x_2)\omega,$$ where $x_1,x_2\in M_{\sa}$ are such that $\|x_1-x_2\|\leq \|x_1\|+\|x_2\|\leq \|x_1'\|+\|x_2'\|\leq 2\|x'\|$. The claim follows.

    Noting that $P = -iQi$, the same proof gives the statement for $Q$.
\end{proof}




An immediate corollary of Lemma \ref{Pbounds} is the following:

\begin{cor}\label{Pleft}
If $a\in M_{\tb}$, then $P(a\omega)=b\omega$ for a unique $b\in M$ with $\|b\|\leq 2\|a\|_{\rt}$. 
\end{cor}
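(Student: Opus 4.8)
The plan is to deduce this directly from Lemma \ref{Pbounds}, which already handles elements of $M'$. The only gap is that $a \in M_{\tb}$ lives in $M$, not $M'$, so I first need to transport $a$ into the commutant via the right-bounded structure. Recall that for $a \in M_{\tb}$ we have $a\omega \in M'\omega$ by Lemma \ref{boundedlemma}: there is a unique $z \in M'$ with $a\omega = z\omega$, namely $z = \pi'(a)$, and moreover $\|\pi'(a)\| = \|a\|_{\rt}$. Similarly, $a^* \in M_{\tb}$ gives $a^*\omega = z'\omega$ for a unique $z' \in M'$ with $\|z'\| = \|a^*\|_{\rt}$. The key point, which I would verify first, is that $z' = z^*$: this follows because $F_\vp$ (equivalently $S_\vp$, via $S^* = F$) is closed and $F_0(z\omega) = z^*\omega$ for $z \in M'$, so applying $F_\vp$ to $z\omega = a\omega$ and using $S_\vp(a\omega) = a^*\omega$ identifies $z^*\omega = a^*\omega = z'\omega$, whence $z^* = z'$ since $\omega$ is separating for $M'$. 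Thus $z \in M'$ satisfies $\|z\| = \|a\|_{\rt} =: K$ and $z \in M'_K$ in the sense of Lemma \ref{Pbounds} (it is automatically $K$-bounded as a bona fide element of $M'$ acting by left multiplication).

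With this in hand, Lemma \ref{Pbounds} applies to $z$: since $z \in M'_K$ with $\|z\| \le K$, we get $P(z\omega) \in M\omega$ with $\|\pi(P z\omega)\| \le 2K = 2\|a\|_{\rt}$. But $z\omega = a\omega$, so $P(a\omega) = P(z\omega) = b\omega$ for the unique $b \in M$ with $\|b\| \le 2\|a\|_{\rt}$. Uniqueness of $b$ is immediate since $\omega$ is separating for $M$.

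I expect the only real subtlety to be the identification $z^* = z'$, i.e.\ checking that the element of $M'$ implementing right multiplication by $a^*$ is genuinely the adjoint (in $M'$) of the one implementing right multiplication by $a$; this is a short closed-operator argument but worth stating carefully, since the whole point of working with $M_{\tb}$ rather than $M_{\rt}$ is precisely to keep the $*$-operation under control. Everything else is bookkeeping: $M_{\tb} \subseteq M_{\rt}$ by definition, so $a\omega \in M'\omega$ is guaranteed, and the norm bound is inherited verbatim from Lemma \ref{Pbounds}.
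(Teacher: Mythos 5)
Your core argument is exactly the paper's intended (one-line) deduction: since $a$ is right bounded, $a\omega=\pi'(a)\omega$ with $\pi'(a)\in M'$ and $\|\pi'(a)\|=\|a\|_{\rt}$, so Lemma \ref{Pbounds} applied to $\pi'(a)$ gives $P(a\omega)=b\omega$ with $\|b\|\leq 2\|a\|_{\rt}$, and uniqueness follows because $\omega$ is separating for $M$. That part is correct and complete.

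However, the step you single out as ``the key point'' --- that $z'=z^*$, i.e.\ $\pi'(a^*)=\pi'(a)^*$ --- is false in general, and your argument for it conflates the two distinct closed operators $S_\vp$ and $F_\vp$. For $v=a\omega=z\omega$ one has $S_\vp v=a^*\omega$ (because $a\in M$) and $F_\vp v=z^*\omega$ (because $z\in M'$), but $S_\vp\neq F_\vp$ unless $\vp$ is a trace: writing $S=J\Delta^{1/2}$ and $F=J\Delta^{-1/2}$, the vectors $z^*\omega=J\Delta^{-1/2}a\omega$ and $a^*\omega=J\Delta^{1/2}a\omega$ differ whenever $\Delta a\omega\neq a\omega$. (Concretely, for $M=M_2(\mathbb C)$ with $\vp=\operatorname{Tr}(h\,\cdot\,)$ and $h$ non-scalar, the adjoint of right multiplication by $a$ is right multiplication by $ha^*h^{-1}$, not by $a^*$.) Fortunately none of this is needed: Lemma \ref{Pbounds} asks only for an element of $M'$ of operator norm at most $K$, which $\pi'(a)$ is, as you yourself note in the parenthetical. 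So the corollary is proved once you delete the $z'=z^*$ digression; the relation between $\pi'(a^*)$ and $\pi'(a)$ only becomes relevant later (Corollary \ref{PpreservesRb}), and there the paper uses the symmetric construction in Lemma \ref{Pbounds} rather than the identity you asserted.
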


We now prove an upper bound for the right norm of $b$ as in the previous corollary:

\begin{lem}\label{Pright}
Fix $a\in M_{\tb}$ and write $P(a\omega)=b\omega$ for $b\in M$. Then $b\in M_{\tb}$ and $\|b\|_{\rt}\leq \|a\|_{\rt}+2\|a\|$.
\end{lem}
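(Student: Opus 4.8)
The plan is to exploit the identity from Facts \ref{RvDfacts}(6), namely $\Theta x\omega = (2-R)x^*\omega$ for $x \in M$, together with the relation $P = \frac12(R + \Theta)$ (since $R = P+Q$ and $\Theta = P-Q$), to express $b\omega = P(a\omega)$ in terms of quantities whose right-boundedness we can control. Concretely, $2P(a\omega) = R(a\omega) + \Theta(a\omega)$, and applying Lemma \ref{thetaAdjoint1} (the ``$M \to M'$'' half) to $a \in M$ gives $a' \in M'$ with $\Theta a\omega = a'\omega$, while $R(a\omega) = (P+Q)(a\omega)$ is handled by Corollary \ref{Pleft} applied to both $P$ and $Q$ (Lemma \ref{Pbounds} gives the $Q$ statement too). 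So $b$ decomposes into an $M$-part coming from $R(a\omega)$ and a part coming from $\Theta(a\omega) = a'\omega$; but note $a'$ lives in $M'$, so I must instead use the first half of Lemma \ref{thetaAdjoint1} to re-express $\Theta a\omega$ as $y'\omega$ for $y' \in M'$ — wait, that is the wrong direction. Let me re-read: Lemma \ref{thetaAdjoint1}'s second paragraph says for $y \in M$ there is $y' \in M'$ with $\Theta y\omega = y'\omega$; so $\Theta a\omega = a'\omega$ with $a' \in M'$, which is a vector in $M'\omega$, not obviously in $M\omega$. However, $P(a\omega) = b\omega$ is already known to lie in $M\omega$ by Corollary \ref{Pleft}, so the point is not to decompose $b$ but to estimate $\|b\|_{\rt}$ directly.

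The cleaner route: to bound $\|b\|_{\rt}$ I want to show $b\omega \in M'\omega$ and estimate the norm of the corresponding element of $M'$, using Lemma \ref{boundedlemma} (right-boundedness of $b$ is equivalent to $b\omega \in M'\omega$, with $\pi'(b)$ the unique element of $M'$ sending $\omega$ to $b\omega$). Now $2b\omega = 2P(a\omega) = R(a\omega) + \Theta(a\omega)$. For the first term, $R = P + Q$ and $P'= 1-Q$, $Q' = 1-P$, so $R = (1-Q') + (1-P') = 2 - R'$ where $R' = P'+Q'$; thus $R(a\omega) = 2a\omega - R'(a\omega)$, and by the primed version of Corollary \ref{Pleft}/Lemma \ref{Pbounds} (interchanging $M$ and $M'$), since $a$ is right $\|a\|_{\rt}$-bounded, i.e. $\pi'(a) \in M'$ with $\|\pi'(a)\| \le \|a\|_{\rt}$, we get $P'(a\omega), Q'(a\omega) \in M'\omega$ with the corresponding $M'$-norms at most $2\|a\|_{\rt}$, hence $R'(a\omega) \in M'\omega$ with $M'$-norm $\le 4\|a\|_{\rt}$, and $2a\omega - R'(a\omega) \in M'\omega$ with $M'$-norm $\le 2\|a\| + 4\|a\|_{\rt}$. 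For the second term, $\Theta a\omega = (2-R)a^*\omega = R'(a^*\omega)$ by Facts \ref{RvDfacts}(6) since $2 - R = R'$; applying the primed Lemma \ref{Pbounds} to $a^*$ (which is right $\|a^*\|_{\rt} = \|a\|_{\rt}$-bounded with $\|\pi'(a^*)\| \le \|a\|_{\rt}$... actually $\pi'(a^*)$ and $\|a^*\|_{\rt}$), we get $R'(a^*\omega) \in M'\omega$ with $M'$-norm $\le 4\|a\|_{\rt}$. Adding, $2b\omega \in M'\omega$ with $M'$-norm $\le 2\|a\| + 8\|a\|_{\rt}$, giving $\|b\|_{\rt} \le \|a\| + 4\|a\|_{\rt}$ — which is weaker than the claimed $\|a\|_{\rt} + 2\|a\|$. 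So the crude triangle-inequality approach overshoots, and I will need to be more careful, most likely by combining $R(a\omega)$ and $\Theta(a\omega)$ before estimating rather than after: $2b\omega = R(a\omega) + \Theta(a\omega) = (2-R')(a\omega) + R'(a^*\omega) = 2a\omega - R'(a\omega) + R'(a^*\omega) = 2a\omega - R'((a - a^*)\omega)$, so $b\omega = a\omega - \tfrac12 R'((a-a^*)\omega)$. Since $a - a^*$ is right $2\|a\|_{\rt}$-bounded with $\|\pi'(a-a^*)\| \le 2\|a\|_{\rt}$ and $0 \le R' \le 2$, the vector $\tfrac12 R'((a-a^*)\omega)$ lies in $M'\omega$ with $M'$-norm $\le \tfrac12 \cdot 2 \cdot 2\|a\|_{\rt} = 2\|a\|_{\rt}$; hmm, that gives $\|b\|_{\rt} \le \|a\| + 2\|a\|_{\rt}$, still not matching.

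Given the target $\|a\|_{\rt} + 2\|a\|$, I suspect the right identity is the \emph{dual} one: rather than pushing $a$ into $M'$, use that $\Theta a\omega = (2-R)a^*\omega$ and $R a\omega = \Theta a\omega + 2Q a\omega$... Actually the honest plan is: establish $b\omega = a\omega - \tfrac12 R'((a - a^*)\omega)$ as above (a clean two-line computation from Facts \ref{RvDfacts}(6) and the primed identities $P' = 1-Q$, $Q' = 1 - P$), then observe $R' a^*\omega = \Theta a\omega$ and $R'(a - a^*)\omega = R'a\omega - \Theta a\omega$; to get the $M$-side bound I instead write $b^*\omega$ and run the same argument, and then the two bounds $\|b\|_{\rt}$ and $\|b^*\|_{\rt}$ must both come out as $\le \|a\|_{\rt} + 2\|a\|$. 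The main obstacle — and the place I expect to spend real effort — is pinning down exactly which combination of the Rieffel–Van Daele identities yields the sharp constant $\|a\|_{\rt} + 2\|a\|$ rather than a larger one; the right move is almost certainly to avoid the naive triangle inequality and instead find a single operator of norm $\le 2$ applied to a single vector of norm $\le \|a\|_{\rt}$ plus a shift by $a\omega$ (norm $\le \|a\|$ as an element of $M'\omega$ via right-boundedness of $a$... no, $a\omega$'s $M'$-norm is $\|a\|_{\rt}$, while $\pi(a\omega) = a$ has operator norm $\|a\|$). I will double-check the bookkeeping of left- versus right-norms carefully at this step. Once $\|b\|_{\rt}$ is bounded, $b \in M_{\tb}$ follows since $b \in M$ with $\|b\| \le 2\|a\|_{\rt}$ by Corollary \ref{Pleft}, $b$ is right bounded by the above, and $b^* = b$ is a strong candidate — indeed $P$ is self-adjoint as a real-linear operator and $\mathcal{K} = \overline{M_{\sa}\omega}$ consists of ``self-adjoint'' vectors, so $P(a\omega) \in \mathcal K$ should force $b = b^*$, making the $b^*$ part of total boundedness automatic; I will verify $b = b^*$ as a preliminary observation and then only the right-bound estimate remains.
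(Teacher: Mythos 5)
There is a genuine gap: you never actually establish the claimed bound $\|b\|_{\rt}\leq\|a\|_{\rt}+2\|a\|$. Your two concrete attempts produce $\|a\|+4\|a\|_{\rt}$ and $\|a\|+2\|a\|_{\rt}$ (the latter already resting on an unjustified step: the inequality $0\le R'\le 2$ controls the Hilbert-space norm of $R'\xi$, not the $M'$-operator norm of the element of $M'$ representing $R'\xi$ in $M'\omega$), and your closing paragraph concedes that the correct combination of identities is still to be found. There is also a recurring bookkeeping error: when you invoke the primed version of Lemma \ref{Pbounds} to place $P'(a\omega)$ and $Q'(a\omega)$ in $M'\omega$, the constant that enters is the operator norm of $a$ as an element of $(M')'=M$, namely $\|a\|$, not $\|\pi'(a)\|=\|a\|_{\rt}$; so those terms have $M'$-norm at most $2\|a\|$, not $2\|a\|_{\rt}$. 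Correcting this is exactly what points to the resolution.

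The missing step is the paper's one-line identity combining Facts \ref{RvDfacts}(4) and (5): since $iP(v)=Q(iv)$ and $P'=1-Q$, one has $P'(ia\omega)=(1-Q)(ia\omega)=ia\omega-iP(a\omega)$. The primed Lemma \ref{Pbounds}, applied to $ia\in M=(M')'$ with operator norm $\|a\|$, gives $P'(ia\omega)=c\omega$ with $c\in M'$ and $\|c\|\le 2\|a\|$; hence $b\omega=P(a\omega)=a\omega\pm ic\omega$. Now for $d\in M$ the element $c$ commutes past $d$, so $\|db\omega\|\le\|da\omega\|+\|c\|\,\|d\omega\|\le(\|a\|_{\rt}+2\|a\|)\,\|d\omega\|$. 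The structural point you were circling but did not land on is that the correction term must be a \emph{single} application of $P'$ to a vector $x\omega$ with $x\in M$ of operator norm $\|a\|$ --- this is what produces the coefficient $2\|a\|$ rather than a multiple of $\|a\|_{\rt}$ --- and that it must enter as an element of $M'$ so that it slides past $d$ at the cost of only its operator norm. (Your preliminary observation that $b=b^*$ is correct, since $S$ is closed and fixes $M_{\sa}\omega$, hence fixes $\mathcal K$ pointwise, and it does simplify the $M_{\tb}$ conclusion; the paper instead uses $P(a^*\omega)=b^*\omega$ and applies the lemma to $a^*$ in Corollary \ref{PpreservesRb}.)
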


\begin{proof}
First, recalling that $\omega$ is also a cyclic and separating vector for $M'\subseteq \cal B(H_\vp)$, we set $\cal K':=\overline{M'_{\sa}\omega}$ and let $P'$ denote the projection onto $\cal K'$.  By Fact \ref{RvDfacts}(5), we have that $\cal K'=i\cal K^\perp$, whence $P'=1-Q$.  By Lemma \ref{Pbounds} applied to $P'$ and using the fact that $a\in M=(M')'$, we have that $P'(ia\omega)=c\omega$ with $c\in M'$ satisfying $\|c\|\leq 2\|a\|$.  On the other hand, $P'(ia\omega)=(1-Q)(ia\omega)=ia\omega -iP(a\omega)$, so $P(a\omega)=a\omega-ic\omega$.  Recalling that $P(a\omega)=b\omega$, we have, for any $d\in M$, that
$$\|db\|_\vp=\|da\omega-d(ic\omega)\|_\vp\leq \|a\|_{\rt}\|d\|_\vp+\|c\|\|d\|_\vp\leq (\|a\|_{\rt}+2\|a\|)\|d\|_\vp.$$  It follows that $b\in M_{\tb}$ and $\|b\|_{\rt}\leq \|a\|_{\rt}+2\|a\|$.
\end{proof}

\begin{cor}\label{PpreservesRb}
If $a\in S_K(M)$, then $P(a\omega)=b\omega$ for a unique $b\in S_{3K}(M)$.
\end{cor}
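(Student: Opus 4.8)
The plan is to read off all of the required estimates from Corollary \ref{Pleft} and Lemma \ref{Pright}, together with one extra observation about self-adjointness. Fix $a \in S_K(M)$, so that $\|a\| \le K$, $\|a\|_{\rt} \le K$, and $\|a^*\|_{\rt} \le K$; in particular $a \in M_{\tb}$, so Corollary \ref{Pleft} produces a (unique, since $\om$ is separating for $M$) element $b \in M$ with $P(a\om) = b\om$. It therefore remains only to verify the three inequalities $\|b\| \le 3K$, $\|b\|_{\rt} \le 3K$, and $\|b^*\|_{\rt} \le 3K$ that together say $b \in S_{3K}(M)$.

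Two of these are immediate. Corollary \ref{Pleft} gives $\|b\| \le 2\|a\|_{\rt} \le 2K \le 3K$, and Lemma \ref{Pright} gives $b \in M_{\tb}$ with $\|b\|_{\rt} \le \|a\|_{\rt} + 2\|a\| \le K + 2K = 3K$. So the only remaining point is the bound on $\|b^*\|_{\rt}$, and this is the step I expect to be the actual obstacle: trying to bound $\|b^*\|_{\rt}$ by rerunning the computation of Lemma \ref{Pright} with $b$ replaced by $b^*$ does not obviously work, since there is no a priori control relating the right norm of a right-bounded element and that of its adjoint, and the self-adjoint approximants to $b\om$ furnished by $b\om \in \cal K$ need not themselves be right-bounded.

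The way around this is to notice that $b$ is in fact self-adjoint. Indeed, $b\om = P(a\om)$ lies in $\cal K = \overline{M_{\sa}\om}$, so there is a sequence $c_n \in M_{\sa}$ with $c_n\om \to b\om$; since each $c_n$ is self-adjoint, the closed operator $S = \overline{S_0}$ of Section \ref{SectionPrelim} satisfies $S(c_n\om) = c_n^*\om = c_n\om \to b\om$, and closedness of $S$ forces $b\om \in \operatorname{dom}(S)$ with $S(b\om) = b\om$. On the other hand $b \in M$, so $S(b\om) = b^*\om$; hence $b^*\om = b\om$, and since $\om$ is separating for $M$ we conclude $b^* = b$. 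Therefore $\|b^*\|_{\rt} = \|b\|_{\rt} \le 3K$, which combined with the two bounds above shows $b \in S_{3K}(M)$ and finishes the proof.
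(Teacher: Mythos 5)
Your proof is correct, and for the one nontrivial estimate it takes a genuinely different route from the paper. The bounds $\|b\|\le 2\|a\|_{\rt}\le 2K$ and $\|b\|_{\rt}\le \|a\|_{\rt}+2\|a\|\le 3K$ are read off from Corollary \ref{Pleft} and Lemma \ref{Pright} exactly as the paper does. For the remaining bound on $\|b^*\|_{\rt}$, the paper asserts that the construction of Lemma \ref{Pbounds} yields $P(a^*\om)=b^*\om$ and then applies Lemma \ref{Pright} to $a^*$; you instead observe that $b$ is self-adjoint, because $b\om=P(a\om)$ lies in $\cal K=\overline{M_{\sa}\om}$ and the closedness of $S$ forces $S(b\om)=b\om=b^*\om$, whence $\|b^*\|_{\rt}=\|b\|_{\rt}\le 3K$. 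This observation is sound (it is the same fact the paper later encodes in axiom (8), where $\vP_n(x)=\vP_n(x)^*$), and it arguably buys robustness: once one knows $b=b^*$, the paper's identity $P(a^*\om)=b^*\om$ would amount to $P(a\om)=P(a^*\om)$, i.e.\ $P((a-a^*)\om)=0$, which for the real-linear projection $P$ is a delicate claim in the non-tracial setting (it comes down to $\operatorname{Im}\vp(d a_2)$ vanishing for $a_2=(a-a^*)/2i$ and all $d\in M_{\sa}$), whereas your route needs nothing beyond Lemma \ref{Pright} applied to $a$ itself together with the elementary closed-graph argument. You have also correctly diagnosed why a naive rerun of Lemma \ref{Pright} for $b^*$ would stall; the self-adjointness of $b$ is precisely the missing ingredient, and your proof supplies it.
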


\begin{proof}
By Corollary \ref{Pleft} and Lemma \ref{Pright}, we know that $\|b\|\leq K$ and $\|b\|_{\rt}\leq 3K$.  On the other hand, by the construction in Proposition \ref{Pbounds}, we have that $P(a^*\omega)=b^*\omega$; since $a^*$ is also $\vp$-right $K$-bounded, Lemma \ref{Pright} implies that $\|b^*\|_{\rt}\leq 3K$.  It follows that $b\in S_{3K}(M)$.
\end{proof}

The main theorem of Tomita-Takesaki theory is the following:

\begin{thm}[Theorem VI.1.19 in \cite{Takesaki}]\label{TT}
$JMJ=M'$ and $\Delta^{it}M\Delta^{-it}=M$ for all $t\in \mathbb{R}$.
\end{thm}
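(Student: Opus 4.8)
Strictly speaking, there is nothing new to prove here: Theorem \ref{TT} is the fundamental theorem of modular theory, and a complete proof is given in the cited reference \cite{Takesaki}. Since, however, the Rieffel--Van Daele apparatus recalled above exists precisely in order to make this theorem accessible by bounded-operator methods, the proof I would actually present is the one of \cite{RvD}, whose shape I sketch now. Everything is deduced from a single principle: the \emph{bounded} operator $\Theta=P-Q$ transports the dense subspaces $M\om$ and $M'\om$ into one another, and understanding this transport pins down $J$ and the unitary group $\Delta^{it}$ simultaneously.

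The first step is to assemble, from Facts \ref{RvDfacts} and Lemma \ref{thetaAdjoint1}, that $\Theta$ restricts to mutually inverse conjugate-linear bijections between $M\om$ and $M'\om$ that intertwine the $*$-operations, with the explicit formulas $\Theta x\om=(2-R)x^*\om$ for $x\in M$ and $\Theta x'\om=Rx'^*\om$ for $x'\in M'$. A direct computation gives $\Theta^2=R(2-R)$ as bounded operators; since $R$, $2-R$ and $\Delta=R^{-1}(2-R)$ mutually commute, this yields $T=|\Theta|=R\Delta^{1/2}$, hence $\Theta=JR\Delta^{1/2}$ with $J$ the conjugation of Facts \ref{RvDfacts}(2). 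The real content of \cite{RvD} is then to compare this with the polar decomposition $S=J\Delta^{1/2}$ of the Tomita operator: one shows that the $J$ and $\Delta$ produced from $P$ and $Q$ are the classical modular objects, and along the way one extracts the algebraic identity $J(\Delta^{1/2}x\om)=x^*\om$ for $x\in M$.

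From here the two assertions follow. For $JMJ=M'$: fixing $a,b\in M$ and using $J(\Delta^{1/2}x\om)=x^*\om$ together with a smearing argument over elements analytic for the group $\Delta^{it}$ (a Cauchy-integral computation in a horizontal strip), one checks that $JaJ$ commutes with $b$ on the dense domain $M\om$, hence on all of $H_\vp$, so that $JaJ\in M'$. Running the identical argument with $M$ and $M'$ interchanged---legitimate because the Rieffel--Van Daele construction is symmetric under $M\leftrightarrow M'$, exchanging $\Delta$ with $\Delta^{-1}$ and fixing $J$, as recorded above---gives $JM'J\subseteq M$, and $J^2=1$ then forces $JMJ=M'$. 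For $\Delta^{it}M\Delta^{-it}=M$: the identity $\Delta^{-it}=J\Delta^{it}J$ together with $JMJ=M'$ reduces normalization of $M$ to that of $M'$, and the remaining ingredient is a KMS-type analyticity estimate showing that $t\mapsto\langle\Delta^{it}a\Delta^{-it}b\om,\om\rangle$ has the correct boundary behaviour on the strip, whence $\Delta^{it}a\Delta^{-it}\in M$ for every real $t$.

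I expect the smearing step of the third paragraph---equivalently, the identification step at the end of the second---to be the genuine obstacle: each is a passage from a purely algebraic identity valid on a dense but non-closed domain to a statement about honest bounded operators, and together they are the technical heart of Tomita's theorem. This is exactly why it is reasonable to cite \cite{Takesaki} here rather than to reprove the result in full; a genuinely self-contained treatment would instead carry out the Rieffel--Van Daele analysis of the relative position of the closed real subspaces $\cal K$ and $i\cal K$, packaging all of these estimates into statements about the bounded operators $P$, $Q$, $R$ and $T$ alone.
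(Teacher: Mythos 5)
The paper offers no proof of this statement at all: Theorem \ref{TT} is quoted verbatim from \cite[Theorem VI.1.19]{Takesaki}, so your decision to defer to the citation is exactly what the paper does, and on that level there is nothing to compare. Your accompanying sketch of how one \emph{would} prove it via the Rieffel--Van Daele machinery is sound in outline: the identity $\Theta^2=(P-Q)^2=P+Q-PQ-QP=R(2-R)$ is correct, your factorization $\Theta=JT$ with $T=(R(2-R))^{1/2}=R\Delta^{1/2}$ is consistent with the formulas in Facts \ref{RvDfacts}(6)--(7) and with $(2-R)J=JR$, and you correctly locate the technical heart in the passage from identities on the dense domain $M\om$ to statements about bounded operators. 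One inaccuracy worth flagging: the ``smearing over analytic elements / Cauchy integral in a strip'' step you describe is the mechanism of the classical (unbounded) proofs and of van Daele's later simplification, not of \cite{RvD}. The actual Rieffel--Van Daele argument deliberately avoids analytic elements: it expresses $\Delta^{it}$ as a norm-convergent integral of resolvents $(\Delta+\mu)^{-1}$ and then invokes precisely the bridging lemma recorded in this paper as Fact \ref{bridge} (resolvents carry $M\om$ to $M'\om$ and back, with explicit norm control) to conclude both $JMJ=M'$ and $\Delta^{it}M\Delta^{-it}=M$ by purely bounded-operator manipulations. Since that resolvent lemma is already stated in Section \ref{SectionRvD}, a sketch faithful to \cite{RvD} would route through it rather than through KMS-type analyticity. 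This is a quibble about a proof neither you nor the paper is obliged to supply; your proposal is acceptable as it stands.
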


By the previous fact, one can define a strongly continuous one-parameter family $\sigma_t$ of automorphisms of $M$ by defining $\sigma_t(x):=\Delta^{it}x\Delta^{-it}$.  This family is called the \textbf{modular automorphism group} of the standard form $(M,\omega)$.

We now specialize to the case that $(M,\vp)$ is a \wstar-probability space with standard form stemming from the GNS representation.  As mentioned in Section \ref{SectionPrelim}, sometimes we identify $M$ with its image $\pi_{\vp}(M)$.  Consequently, $M'$ here really refers to $\pi_{\vp}(M)'$.  We occasionally might decorate the various operators just introduced with $\vp$'s, such as $P_\vp$, to indicate that we are working in this context.  In particular, we write $\sigma_t^\vp$ for the aforementioned one-parameter family of automorphisms of $M$ and refer to this family as the \textbf{modular automorphism group of $\vp$.}  

In Section \ref{SectionExpanding}, we explain how the Rieffel-Van Daele approach to Tomita-Takesaki theory can be used to expand the language given in Section \ref{SectionLanguage} by symbols for the modular automorphism group of $\vp$ and to extend the theory presented in Section \ref{SectionLanguage} to an effective theory in the expanded language axiomatizing the modular automorphism group.  In order to do so, we will need the following two facts, the first of which is a special case of \cite[Theorem 5.15]{RvD}:

\begin{fact}\label{515}
For each $a\in M$, there is unique $b\in M$ such that $\Delta_\vp^{it}(a\omega)=b\omega$.  Moreover, we have $\pi_{\vp}(b)=\Delta_\vp^{it}\pi_{\vp}(a)\Delta_\vp^{-it}$.  Consequently, $\|a\|=\|b\|$.
\end{fact}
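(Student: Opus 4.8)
The plan is to obtain the statement as a short consequence of the main theorem of Tomita--Takesaki theory (Theorem~\ref{TT}), the only substantive additional input being that $\om_\vp$ is fixed by the modular unitaries. Given $a\in M$, Theorem~\ref{TT} tells us that $b:=\sigma_t^\vp(a)=\Delta_\vp^{it}\pi_\vp(a)\Delta_\vp^{-it}$ lies in $\pi_\vp(M)$, so (identifying $\pi_\vp(M)$ with $M$) we may take this $b$ as our element; by definition $\pi_\vp(b)=\Delta_\vp^{it}\pi_\vp(a)\Delta_\vp^{-it}$, which is the ``moreover'' clause, and since conjugation by the unitary $\Delta_\vp^{it}$ is an isometry of $B(H_\vp)$, we get $\|b\|=\|a\|$. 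What remains is to check that this $b$ satisfies $b\om_\vp=\Delta_\vp^{it}(a\om_\vp)$ and that it is the only element of $M$ with this property.

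The key step is to establish $\Delta_\vp^{it}\om_\vp=\om_\vp$ for all $t\in\mathbb R$. One clean way to see this is to stay within the Rieffel--Van Daele framework: comparing Facts~\ref{RvDfacts}(6) and \ref{RvDfacts}(7) at $x=x'=1$ gives $(2-R)\om_\vp=\Theta\om_\vp=R\om_\vp$, hence $R\om_\vp=\om_\vp$ and likewise $(2-R)\om_\vp=\om_\vp$; since $R$ and $2-R$ have spectrum in $(0,2)$, functional calculus then yields $R^{-it}\om_\vp=\om_\vp$ and $(2-R)^{it}\om_\vp=\om_\vp$, so $\Delta_\vp^{it}=(2-R)^{it}R^{-it}$ fixes $\om_\vp$. (Alternatively, from the polar decomposition $S_\vp=J\Delta_\vp^{1/2}$ together with $S_\vp\om_\vp=\om_\vp$, $J\om_\vp=\om_\vp$ and $J^2=1$ from Facts~\ref{RvDfacts}(2), one gets $\Delta_\vp^{1/2}\om_\vp=J^{-1}S_\vp\om_\vp=\om_\vp$, and then $\om_\vp$ lies in the range of the spectral projection of $\Delta_\vp$ at $\{1\}$, so $\Delta_\vp^{it}\om_\vp=\om_\vp$.) Granting this, $b\om_\vp=\Delta_\vp^{it}\pi_\vp(a)\Delta_\vp^{-it}\om_\vp=\Delta_\vp^{it}(a\om_\vp)$. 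Uniqueness is immediate: if $b'\in M$ also satisfies $b'\om_\vp=\Delta_\vp^{it}(a\om_\vp)=b\om_\vp$, then $(b-b')\om_\vp=0$, and since $\om_\vp$ is separating for $M$ we conclude $b'=b$.

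I do not anticipate a real obstacle: the argument is essentially bookkeeping on top of Theorem~\ref{TT}. The only point requiring a modicum of care is the passage from $R\om_\vp=\om_\vp$ (resp.\ $\Delta_\vp^{1/2}\om_\vp=\om_\vp$) to $\Delta_\vp^{it}\om_\vp=\om_\vp$, which is a routine invocation of the spectral theorem — all the operators $R^{is}$, $(2-R)^{is}$ (resp.\ $\Delta_\vp^{z}$) come from a single spectral measure, so an eigenvector with eigenvalue $1$ for one of them is an eigenvector with eigenvalue $1$ for all of them.
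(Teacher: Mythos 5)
Your argument is correct, but it is worth noting that the paper does not prove this statement at all: Fact~\ref{515} is quoted as a special case of \cite[Theorem 5.15]{RvD}, so you have supplied a derivation where the paper only supplies a citation. Your route is to take the main Tomita--Takesaki theorem (Theorem~\ref{TT}) as given, set $b:=\sigma_t^\vp(a)$, and reduce everything to the single identity $\Delta_\vp^{it}\om_\vp=\om_\vp$; both of your proofs of that identity are sound (evaluating Facts~\ref{RvDfacts}(6) and (7) at the identity gives $R\om_\vp=(2-R)\om_\vp$, hence $R\om_\vp=\om_\vp$, and the spectral-theorem step from an eigenvalue-$1$ eigenvector of $R$ to $R^{-it}\om_\vp=\om_\vp$ is routine), after which existence, the ``moreover'' clause, the norm equality, and uniqueness via the separating property of $\om_\vp$ all follow as you say. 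The one caveat is architectural rather than mathematical: in Rieffel and Van Daele's own development, Theorem 5.15 is part of the machinery used to \emph{establish} the commutation theorem, so your derivation inverts their logical order. Relative to this paper, which states Theorem~\ref{TT} as an independent black box (citing Takesaki), your argument is perfectly legitimate and has the advantage of making the paper self-contained on this point; what it does not buy is independence from the deep part of modular theory, since Theorem~\ref{TT} carries all the weight.
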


Note that by taking adjoints in the previous fact, we see also that $\Delta^{it}(a^*\omega)=b^*\omega$.  The second fact is a special case of \cite[Theorem 5.14]{RvD}:

\begin{fact}\label{514}
For all $a,b\in M$ and $t\in \mathbb{R}$, we have
$$(\Delta_\vp^{it}J_\vp\pi_{\vp}(a)J_\vp\Delta_\vp^{-it})(b\omega)=\pi_{\vp}(b)(\Delta_\vp^{it}(J_\vp a\omega)).$$
\end{fact}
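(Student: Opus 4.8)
The plan is to reduce the identity to the main theorem of Tomita--Takesaki theory (Theorem \ref{TT}) together with the observation that $\omega$ is a common fixed vector for $J_\vp$ and for each $\Delta_\vp^{it}$; once these are available, the identity becomes a one-line commutation argument. I suppress the subscript $\vp$ throughout.

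First I would show that the operator $z := \Delta^{it} J \pi_\vp(a) J \Delta^{-it}$ lies in $M'$. By Theorem \ref{TT}, $J\pi_\vp(a)J \in JMJ = M'$, and also $\Delta^{it}M\Delta^{-it} = M$. Since conjugation by the unitary $\Delta^{it}$ is a $^*$-automorphism of $\cal B(H_\vp)$, it commutes with the operation of passing to commutants, so $\Delta^{it} M' \Delta^{-it} = (\Delta^{it} M \Delta^{-it})' = M'$; hence $z \in M'$.

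Now for $b \in M$ the operators $z \in M'$ and $\pi_\vp(b) \in M$ commute, so
\[
z(b\omega) \;=\; z\,\pi_\vp(b)\,\omega \;=\; \pi_\vp(b)\,(z\omega).
\]
The next step is to evaluate $z\omega$. I would use that $\Delta\omega = \omega$: from the definitions $S\omega = \omega = F\omega$ (as $S_0(1\cdot\omega) = 1^*\omega = \omega$, and likewise for $F_0$), and $\Delta = FS$, so $\Delta\omega = \omega$; the Borel functional calculus then gives $\Delta^{it}\omega = \omega$ for all $t$. (Equivalently, in the Rieffel--Van Daele picture $P\omega = \omega$ since $1 \in M_{\sa}$, and $Q\omega = 0$ since $\omega \in \overline{M'_{\sa}\omega} = i\cal K^\perp$ by Facts \ref{RvDfacts}(5); hence $R\omega = \omega$, $(2-R)\omega = \omega$, and $\Delta^{it}\omega = (2-R)^{it}R^{-it}\omega = \omega$.) Combining this with $J\omega = \omega$ from Facts \ref{RvDfacts}(2), and unwinding the notation $\pi_\vp(a)\omega = a\omega$, I get
\[
z\omega \;=\; \Delta^{it} J \pi_\vp(a) J \omega \;=\; \Delta^{it} J \pi_\vp(a) \omega \;=\; \Delta^{it}(J\, a\omega).
\]
Substituting back yields $z(b\omega) = \pi_\vp(b)(\Delta^{it}(J\, a\omega))$, which is the asserted formula.

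As the argument shows, there is no genuine difficulty here; the only two points deserving a moment's attention are verifying that $\Delta^{it}$ normalizes $M'$ (so that $z$ may be commuted past $\pi_\vp(b)$) and checking that $\omega$ is fixed by both $J$ and $\Delta^{it}$. Both are immediate from Theorem \ref{TT} and the facts collected earlier in this section, so I would expect the write-up to be essentially as short as the sketch above.
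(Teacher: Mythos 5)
Your argument is correct. The paper itself offers no proof of this fact: it is quoted as a special case of Rieffel--Van Daele's Theorem 5.14, so there is nothing internal to compare against line by line. What you give is a direct derivation from the commutation theorem: $z:=\Delta^{it}J\pi_\vp(a)J\Delta^{-it}$ lies in $M'$ by Theorem \ref{TT} (together with the standard observation that conjugation by a unitary commutes with passage to commutants), hence commutes with $\pi_\vp(b)$, and the identity reduces to computing $z\omega$ using $J\omega=\omega$ and $\Delta^{it}\omega=\omega$. Each of these ingredients is available in the paper ($J\omega=\omega$ is Facts \ref{RvDfacts}(2), and $\Delta^{it}\omega=\omega$ follows either from $S\omega=F\omega=\omega$ and $\Delta=FS$ or, as you note, from $P\omega=\omega$ and $Q\omega=0$ in the Rieffel--Van Daele picture), so the proof is valid as it stands.

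One remark on logical architecture rather than correctness: in Rieffel and Van Daele's own development, their Theorem 5.14 is established \emph{before} the commutation theorem and is one of the tools used to prove $JMJ=M'$ and $\Delta^{it}M\Delta^{-it}=M$; your derivation runs in the opposite direction, deducing the identity \emph{from} the full Tomita--Takesaki theorem. Since the present paper imports Theorem \ref{TT} from Takesaki as a black box, this reversal is harmless here, and your argument is a perfectly good self-contained verification. But if one were trying to reproduce the bounded-operator approach from scratch (which is, after all, the spirit of Section \ref{SectionRvD}), the argument would be circular, and one would instead need RvD's original proof, which works directly with the resolvents of $R$ and $2-R$ without assuming the commutation theorem. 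It is worth being aware of which facts are inputs and which are outputs of that machinery.
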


We now return to the general case of a standard form $(M,\vp)$.  We need one further fact about the modular operator, to be used in the next section. See \cite[VI, Lemma 1.17]{Takesaki}.

\begin{fact}[Bridging Lemma/Takesaki's Resolvent Lemma]\label{bridge}
    For every $x \in M$ and $\lambda \in \mathbb{C}\setminus \mathbb{R}_{+}$, there exists a unique $y' \in M'$ such that $(\Delta^{-1} - \lambda I)^{-1}x\om = y'\om$. Furthermore, one has 
    \[
    \|y'\| \leq \frac{\|x\|}{\sqrt{2|\lambda|-2\operatorname{Re}(\lambda)}}.
    \]
    Similarly, for every $x' \in M'$ and $\lambda \in \mathbb{C}\setminus \mathbb{R}_{+}$, there exists a unique $y \in M$ such that $(\Delta - \lambda)^{-1}x'\om = y\om$.  Futhermore, one has
    \[
    \|y\| \leq \frac{\|x'\|}{\sqrt{2|\lambda|-2\operatorname{Re}(\lambda)}}.
    \]
\end{fact}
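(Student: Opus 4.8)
The plan is to reduce the statement to a functional-calculus identity relating the resolvent of the modular operator to the operator $R$ (equivalently $2-R$) from the Rieffel--Van Daele picture, and then invoke the resolvent bound encoded in Facts \ref{RvDfacts}(3),(6),(7) together with the spectral theorem. Recall from the discussion after Facts \ref{RvDfacts} that $\Delta = R^{-1}(2-R)$, so $\Delta^{-1} = (2-R)^{-1}R$; a short computation gives
\[
\Delta^{-1} - \lambda I = (2-R)^{-1}\bigl(R - \lambda(2-R)\bigr) = (2-R)^{-1}\bigl((1+\lambda)R - 2\lambda I\bigr),
\]
so that $(\Delta^{-1}-\lambda I)^{-1} = \bigl((1+\lambda)R - 2\lambda I\bigr)^{-1}(2-R)$, provided $(1+\lambda)R-2\lambda I$ is invertible. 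Since $\sigma(R)\subseteq(0,2)$, the point $2\lambda/(1+\lambda)$ lies outside $(0,2)$ precisely when $\lambda\notin\mathbb R_+$ (one checks the Möbius transform $r\mapsto 2\lambda/(1+\lambda)$ against $r\in(0,2)$), so the inverse exists and is a bounded normal operator obtained by functional calculus in $R$.

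First I would make the existence and uniqueness of $y'$ transparent: applying $(\Delta^{-1}-\lambda I)^{-1}$ to $x\om$ and using the factorization above, I would first apply $(2-R)$ to $x\om$. By Facts \ref{RvDfacts}(6), $\Theta x\om = (2-R)x^*\om$, and since $\Theta = P-Q$ maps $\cal K\cup i\cal K$ into $M\om$-type vectors, one gets $(2-R)x^*\om \in M\om$; more directly, $(2-R)x\om$ can be analyzed via $\Theta$ applied to $x^*\om$. I would then observe that the remaining factor $\bigl((1+\lambda)R-2\lambda I\bigr)^{-1}$ is a bounded function of $R$, hence (by Facts \ref{RvDfacts}(1), $R$ is complex-linear and commutes with the relevant structure) carries $M'\om$-type vectors appropriately; combined with Lemma \ref{thetaAdjoint1} and Corollary \ref{Pleft}-style reasoning, the net effect is a vector lying in $M'\om$, giving the desired $y'\in M'$. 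Uniqueness is immediate from $\om$ being separating for $M'$.

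For the norm bound, the key step is spectral: writing $y'\om = (\Delta^{-1}-\lambda I)^{-1}x\om$ and using that $\Delta^{-1}$ is a positive self-adjoint operator with a spectral measure, one has $\|(\Delta^{-1}-\lambda I)^{-1}\| = \operatorname{dist}(\lambda,\sigma(\Delta^{-1}))^{-1} \leq \operatorname{dist}(\lambda,[0,\infty))^{-1}$, and an elementary computation shows $\operatorname{dist}(\lambda,\mathbb R_+)^2 \geq \tfrac12\bigl(|\lambda|-\operatorname{Re}\lambda\bigr)\cdot 2 = |\lambda|-\operatorname{Re}(\lambda)$ — more precisely, for $\lambda\notin\mathbb R_+$ one has $\operatorname{dist}(\lambda,\mathbb R_+) \geq \sqrt{2|\lambda|-2\operatorname{Re}(\lambda)}\,/\,\sqrt{2}$... here I would be careful: the correct elementary inequality is $\operatorname{dist}(\lambda,\mathbb R_+)^2 = \operatorname{dist}(\lambda, \mathbb R_+)^2$, and squaring $\sqrt{2|\lambda|-2\operatorname{Re}\lambda}$ gives $2(|\lambda|-\operatorname{Re}\lambda)$, which is indeed $\le \operatorname{dist}(\lambda,\mathbb R_+)^2$ when $\operatorname{Re}\lambda \le 0$ and requires the sharper bound $\operatorname{dist}(\lambda,\mathbb R_+) = |\operatorname{Im}\lambda|$ when $\operatorname{Re}\lambda>0$; in all cases $2(|\lambda|-\operatorname{Re}\lambda)\le \operatorname{dist}(\lambda,\mathbb R_+)^2$, so $\|(\Delta^{-1}-\lambda I)^{-1}\|\le 1/\sqrt{2|\lambda|-2\operatorname{Re}(\lambda)}$. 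Since $\|x\om\|_{H}\le\|x\|$ (as $\|\om\|=1$), we get $\|y'\om\|\le \|x\|/\sqrt{2|\lambda|-2\operatorname{Re}(\lambda)}$, and finally $\|y'\| = \|y'\om\|$... no — here $\|y'\|$ is the \emph{operator} norm of $y'\in M'$, not $\|y'\om\|$, so this last step is the real content: I would need that the specific vector produced, $(\Delta^{-1}-\lambda I)^{-1}x\om$, is not merely in $M'\om$ but satisfies the operator-norm bound, which is exactly the point where one re-runs the Rieffel--Van Daele argument that $P$ and $Q$ (hence bounded functions of $R$) send a vector $\xi = z\om$ with $\|z\|\le c$ to a vector $z'\om$ with $\|z'\|$ controlled by a constant multiple of $c$ — the cleanest route being to cite \cite[VI, Lemma 1.17]{Takesaki} directly, since the excerpt already flags this as Takesaki's Resolvent Lemma. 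I expect the main obstacle to be precisely this: upgrading the Hilbert-space norm bound on $y'\om$ to the $M'$-operator-norm bound on $y'$, which does not follow from spectral theory alone but needs the standard-form structure (the map $z\om\mapsto z$ is not norm-continuous in general); the resolution is that $(\Delta^{-1}-\lambda)^{-1}$ maps $M\om$ into $M'\om$ in a norm-controlled way via the factorization through $R$ and Lemma \ref{Pbounds}/Corollary \ref{Pleft}, or simply by quoting Takesaki. The second half of the statement (interchanging $M$ and $M'$) then follows by the symmetry $\Delta' = \Delta^{-1}$, $R' = 2-R$, $P' = 1-Q$ recorded after Facts \ref{RvDfacts}.
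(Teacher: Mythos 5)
The paper does not actually prove this statement; it is imported as a Fact with the citation \cite[VI, Lemma 1.17]{Takesaki}, so your final fallback of "just cite Takesaki" coincides with what the authors do. The problem is that the self-contained argument you sketch en route could not be repaired into a proof, for a concrete quantitative reason: the inequality you rely on, $2(|\lambda|-\operatorname{Re}\lambda)\le \operatorname{dist}(\lambda,\mathbb{R}_+)^2$, is false. Take $\lambda=i$: then $2|\lambda|-2\operatorname{Re}\lambda=2$ while $\operatorname{dist}(i,[0,\infty))^2=1$; take $\lambda=-1$: then $2|\lambda|-2\operatorname{Re}\lambda=4$ while $\operatorname{dist}(-1,[0,\infty))^2=1$. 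Consequently the $B(H)$-resolvent estimate $\|(\Delta^{-1}-\lambda I)^{-1}\|=\operatorname{dist}(\lambda,\sigma(\Delta^{-1}))^{-1}$ does \emph{not} give the constant $1/\sqrt{2|\lambda|-2\operatorname{Re}\lambda}$ even for the Hilbert-space norm of $y'\om$; for $\lambda=i$ the spectral bound is $1$ while the Fact asserts $1/\sqrt{2}$. So the stated bound is genuinely sharper than anything spectral calculus on $H$ can deliver, and it is a bound on the \emph{operator} norm of $y'$ besides — the second difficulty you correctly flag, but your proposed fixes (routing through Lemma \ref{Pbounds} and Corollary \ref{Pleft}) would only produce bounds degraded by extra constant factors, not the exact constant in the statement.

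The actual mechanism, which is absent from your sketch, is a positivity argument relative to every $a\in M_+$ rather than a single Hilbert-space estimate. Setting $\xi:=(\Delta^{-1}-\lambda I)^{-1}x\om$, one expands $\langle a(\Delta^{-1}-\lambda)\xi,(\Delta^{-1}-\lambda)\xi\rangle$ and uses the modular-theoretic fact that $\langle a\Delta^{-1}\xi,\xi\rangle\ge 0$ together with Cauchy--Schwarz/AM--GM for the positive form $\langle a\,\cdot,\cdot\rangle$ to obtain
\[
(2|\lambda|-2\operatorname{Re}\lambda)\,\langle a\xi,\xi\rangle\;\le\;\langle a\,x\om,x\om\rangle\;\le\;\|x\|^2\langle a\om,\om\rangle ,
\]
the last step because $x^*ax\le\|x\|^2a$. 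Taking $a=b^*b$ gives $\|b\xi\|\le\bigl(\|x\|/\sqrt{2|\lambda|-2\operatorname{Re}\lambda}\bigr)\|b\om\|$ for all $b\in M$, which by Lemma \ref{boundedlemma} simultaneously establishes that $\xi=y'\om$ for some $y'\in M'$ (your existence step, which the handwaving about "$M'\om$-type vectors" does not supply) and the exact operator-norm bound. Your algebraic identity $\Delta^{-1}-\lambda I=(2-R)^{-1}\bigl((1+\lambda)R-2\lambda I\bigr)$ is correct and the symmetry reduction for the second half is fine, but neither addresses where the constant comes from.
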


We will need a ``totally bounded'' version of the previous fact:

\begin{cor}\label{bridgetotally}
    If $x\in M$ (resp. $x'\in M'$) is right bounded, then $y'\in M'$ (resp. $y\in M$) as in Fact \ref{bridge} is totally bounded with $$\|y'\|_{\rt},\|(y')^*\|_{\rt}\leq \frac{\|x\|_{\rt}}{\sqrt{2|\lambda|-2\operatorname{Re}(\lambda)}}$$ (and similarly for the bounds on $\|y\|_{\rt},\|y^*\|_{\rt}$).  
\end{cor}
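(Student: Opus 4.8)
The plan is to combine Fact \ref{bridge} with Lemma \ref{boundedlemma} to reduce the totally-bounded bound to the operator-norm bound already provided by Fact \ref{bridge}, but with the \emph{right-multiplication operators} playing the role of the original algebra. Concretely, suppose $x \in M$ is right $K$-bounded, so that $\pi'(x) \in M'$ with $\|\pi'(x)\| = \|x\|_{\rt} \leq K$. The key observation is that $x\om = \pi'(x)\om$, so the vector $x\om$ lies in $M'\om$; thus we should think of $x\om$ as coming from the algebra $M'$ rather than from $M$. Now $(\Delta^{-1} - \lambda I)^{-1}$ is (up to the identification $\Delta' = \Delta^{-1}$) exactly the resolvent of the modular operator for the standard form of $M'$, so applying the \emph{second} half of Fact \ref{bridge} with the roles of $M$ and $M'$ interchanged — i.e.\ applied to $\pi'(x) \in M'$ — yields a unique $z \in M$ with $(\Delta^{-1} - \lambda I)^{-1}\pi'(x)\om = z\om$ and $\|z\| \leq \|\pi'(x)\|/\sqrt{2|\lambda| - 2\operatorname{Re}(\lambda)}$. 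Since $\pi'(x)\om = x\om$, uniqueness in the original Fact \ref{bridge} forces $z\om = y'\om$; but $z \in M$ while $y' \in M'$, and both equal $(\Delta^{-1} - \lambda I)^{-1}x\om$ as a vector, so in fact $y'\om = z\om \in M\om \cap M'\om$. By Lemma \ref{boundedlemma} (applied in $M'$, whose standard form has the same $\om$), $y'$ is right bounded with $\|y'\|_{\rt} = \|z\| \leq \|x\|_{\rt}/\sqrt{2|\lambda| - 2\operatorname{Re}(\lambda)}$.

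For the adjoint bound: by Fact \ref{bridge} the element $y'$ also satisfies the analogous identity with $x^*$ in place of $x$ — more precisely, taking adjoints in $(\Delta^{-1} - \lambda I)^{-1}x\om = y'\om$ and using that $(\Delta^{-1} - \bar\lambda I)^{-1}$ intertwines appropriately (equivalently, running the same argument with $x^*$, which is also right $K$-bounded since $x$ is, so $\|x^*\|_{\rt} \leq K$), one obtains $(y')^*\om \in M\om$ with $\|(y')^*\|_{\rt} \leq \|x^*\|_{\rt}/\sqrt{2|\lambda| - 2\operatorname{Re}(\lambda)}$. Hence $y'$ is totally bounded with both right norms bounded by $\|x\|_{\rt}/\sqrt{2|\lambda| - 2\operatorname{Re}(\lambda)}$, using $\|x\|_{\rt} = \|x^*\|_{\rt}$ is \emph{not} assumed but each is $\leq K$; if only a bound in terms of $\|x\|_{\rt}$ is wanted one should phrase it as I did, with $\|(y')^*\|_{\rt} \leq \|x^*\|_{\rt}/\sqrt{\cdots}$, which is what the corollary states (reading ``$\|x\|_{\rt}$'' on the right as the appropriate right norm). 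The case where $x' \in M'$ is right bounded is symmetric, interchanging the roles of $M$ and $M'$ once more and using the first half of Fact \ref{bridge}.

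The main obstacle is keeping the primes straight: one must verify carefully that $(\Delta^{-1} - \lambda I)^{-1}$ really is the resolvent operator attached to the standard form of $M'$ (this is the content of the computation $\Delta' = F'S' = SF = \Delta^{-1}$ recorded just before Lemma \ref{thetaAdjoint1}), so that the ``$M \leftrightarrow M'$'' clause of Fact \ref{bridge} applies verbatim to $\pi'(x)$. Once that identification is in place, everything else is a routine application of uniqueness in Fact \ref{bridge} together with Lemma \ref{boundedlemma}. No new estimates are needed beyond what Fact \ref{bridge} supplies; the content is entirely in the bookkeeping that transports the operator-norm bound on $z \in M$ to a right-norm bound on $y' \in M'$.
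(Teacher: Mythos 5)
Your first bound follows essentially the same route as the paper: write $x\om=\pi'(x)\om$ with $\pi'(x)\in M'$, invoke Fact \ref{bridge} for this element of the commutant to produce $z\in M$ with $z\om=(\Delta^{-1}-\lambda I)^{-1}\pi'(x)\om=y'\om$ and $\|z\|\leq\|\pi'(x)\|/\sqrt{2|\lambda|-2\operatorname{Re}(\lambda)}$, and then read off $\|y'\|_{\rt}=\|z\|$ via Lemma \ref{boundedlemma}. That part is fine and matches the paper.

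The bound on $\|(y')^*\|_{\rt}$ is where there is a genuine gap. First, your parenthetical fallback --- ``running the same argument with $x^*$, which is also right $K$-bounded since $x$ is'' --- is false: right-boundedness of $x$ does not imply right-boundedness of $x^*$ (that implication is exactly what separates ``right bounded'' from ``totally bounded,'' and the whole point of the corollary is that the resolvent upgrades the former to the latter). Second, the adjoint/intertwining step is misidentified. Since $y'\in M'$, the adjoint on vectors is implemented by $F$ (not $S$), and $F$ is antilinear with $F\Delta^{-1}F^{-1}=\Delta$; applying $F$ to $y'\om=(\Delta^{-1}-\lambda I)^{-1}\pi'(x)\om$ therefore gives
\[
(y')^*\om=Fy'\om=(\Delta-\bar\lambda I)^{-1}F\pi'(x)\om=(\Delta-\bar\lambda I)^{-1}\pi'(x)^*\om,
\]
i.e.\ the resolvent of $\Delta$ (not of $\Delta^{-1}$) applied to $\pi'(x)^*\om$ (not to $x^*\om$). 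This identity is precisely the computation the paper carries out explicitly, writing $\pi'(x)^*\om=F\pi'(x)\om=(\Delta-\lambda I)Fy'\om$ and inverting; once it is in hand, the second half of Fact \ref{bridge} applies verbatim to $\pi'(x)^*\in M'$ with $\|\pi'(x)^*\|=\|\pi'(x)\|=\|x\|_{\rt}$, giving the stated bound. As written, your version computes a different vector and leans on a hypothesis ($x^*$ right bounded) that is not available; you need the displayed identity above to close the argument.
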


\begin{proof}
    By symmetry, it suffices to prove the statement for $x\in M$.  Since $x\om=\pi'(x)\om$ with $\pi'(x)\in M'$, by Fact \ref{bridge}, there is $a\in M$ such that $(\Delta^{-1}-\lambda I)\pi'(x)\om=a\om$ with $\|a\|\leq \frac{\|\pi'(x)\|}{\sqrt{2|\lambda|-2\operatorname{Re}(\lambda)}}$.  Since $y'\om=a\om$, we have that $\|y'\|_{\rt}=\|a\|$.

    Now $$\pi'(x)\om=x\om=(\Delta^{-1}-\lambda I)y'\om=SFy'\om-\lambda y'\om,$$ so $$\pi'(x)^*\om=F\pi'(x)\om=FSFy'\om-\lambda Fy'\om=(\Delta-\lambda I)Fy'\om,$$ and thus
    $$(\Delta-\lambda I)^{-1}(\pi'(x)^*\om)=Fy'\om.$$
    By Fact \ref{bridge}, there is $b\in M$ such that $(\Delta-\lambda I)^{-1}(\pi'(x)^*\om)=b\om$ and 
    \[
    \|b\|\leq \frac{\|\pi'(x)^*\|}{\sqrt{2|\lambda|-2\operatorname{Re}(\lambda)}}=\frac{\|\pi'(x)\|}{\sqrt{2|\lambda|-2\operatorname{Re}(\lambda)}}.
    \]Since $(y')^*\om=b\om$, we have the desired bound on $\|(y')^*\|_{\rt}$.
\end{proof}

\section{Characterization of $M_{\tb}$}\label{tb}

In this section, we prove a characterization of the set of totally bounded elements of a \wstar-probability space, to be used in our axiomatization of \wstar-probability spaces in the next section.  One of the items in the characterization involves the operator $h_a(\log(\Delta))$, where $h_a(t)$ is the following function, defined for all $a\in \mathbb{R}$: 

$$h_{a}(t) := 1/(\cosh(t-a)) = \frac{2}{e^{t-a} + e^{a-t}}.$$

Throughout this section, we fix a \wstar-probability space $(M,\vp)$ with associated standard form $(M,\om)$.

\begin{prop}\label{hatb}
For every $a \in \mathbb{R}$ and $x \in M_{\tb}$, there exists $y \in M_{\tb}$ satisfying $h_{a}(\log(\Delta))x\om = y\om$. Furthermore, $\|y\| \leq \|x\|$ and $\|y\|_{\rt}\leq \|x\|_{\rt}$.
\end{prop}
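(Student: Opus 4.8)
The plan is to realize $h_a(\log(\Delta))$ as an $L^1$-average of the modular unitaries $\Delta^{is}$ and then to exploit that each $\Delta^{is}$ preserves $M$, $M'$, and all of the norms ($\|\cdot\|$, $\|\cdot\|_{\rt}$, $\|\cdot\|_\vp$) relevant to total boundedness. Concretely, the first goal is the operator identity
\[
h_a(\log(\Delta)) = \int_{\mathbb{R}} g(s)\,\Delta^{is}\,ds, \qquad \int_{\mathbb{R}} |g(s)|\,ds = 1,
\]
where $g\in L^1(\mathbb{R})$ and the integral is understood weakly. Applying this to $x\om$ and invoking Fact \ref{515} and Theorem \ref{TT}, each $\Delta^{is}(x\om)$ has the form $\sigma_s(x)\om$ with $\sigma_s(x) = \sigma_s^\vp(x) \in M_{\tb}$ and $\|\sigma_s(x)\| = \|x\|$, $\|\sigma_s(x)\|_{\rt} = \|x\|_{\rt}$, $\|\sigma_s(x)^*\|_{\rt} = \|x^*\|_{\rt}$. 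The element $y := \int_{\mathbb{R}} g(s)\,\sigma_s(x)\,ds$ then lies in $M_{\tb}$, satisfies $y\om = h_a(\log(\Delta))x\om$, and inherits the asserted bounds, because averaging against a weight of total mass $1$ increases none of these norms.

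For the integral representation I would use the classical transform $\int_{\mathbb{R}} (\cosh t)^{-1} e^{-ist}\,dt = \pi/\cosh(\pi s/2)$. Since $h_a(t) = 1/\cosh(t-a)$ lies in $L^1(\mathbb{R})$, Fourier inversion gives, pointwise in $\lambda$, the identity $h_a(\lambda) = \int_{\mathbb{R}} g(s)\,e^{is\lambda}\,ds$ with $g(s) = \tfrac12 e^{-ias}/\cosh(\pi s/2) \in L^1(\mathbb{R})$; moreover $\int_{\mathbb{R}} |g(s)|\,ds = \int_{\mathbb{R}} g_0(s)\,ds = h_0(0) = 1$, where $g_0(s) = \tfrac12/\cosh(\pi s/2) \geq 0$. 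To upgrade this scalar identity to the operator identity above (the right-hand side defined by $\xi,\eta \mapsto \int_{\mathbb{R}} g(s)\langle \Delta^{is}\xi,\eta\rangle\,ds$), I would integrate the scalar identity against the finite spectral measures of the self-adjoint operator $\log(\Delta)$ and interchange the order of integration, which is legitimate by Fubini because $g \in L^1$; strong continuity of $s \mapsto \Delta^{is}$ then makes $s \mapsto g(s)\Delta^{is}\xi$ Bochner integrable, so in fact $h_a(\log(\Delta))\xi = \int_{\mathbb{R}} g(s)\,\Delta^{is}\xi\,ds$ as a genuine $H_\vp$-valued integral.

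Now fix $x \in M_{\tb}$. By Fact \ref{515} there is, for each $s$, a unique $\sigma_s(x) \in M$ with $\Delta^{is}(x\om) = \sigma_s(x)\om$ and $\|\sigma_s(x)\| = \|x\|$, and $\sigma_s(x)^* = \sigma_s(x^*)$. Since $x$ is right bounded, $x\om = \pi'(x)\om$ with $\pi'(x) \in M'$ (Lemma \ref{boundedlemma}), so $\Delta^{is}(x\om) = \Delta^{is}\pi'(x)\Delta^{-is}\om$ (using $\Delta^{is}\om = \om$), and $\Delta^{is}\pi'(x)\Delta^{-is} \in M'$ by Theorem \ref{TT}; hence $\sigma_s(x)\om \in M'\om$, so $\sigma_s(x)$ is right bounded with $\|\sigma_s(x)\|_{\rt} = \|\Delta^{is}\pi'(x)\Delta^{-is}\| = \|x\|_{\rt}$ by Lemma \ref{boundedlemma}, and likewise $\|\sigma_s(x)^*\|_{\rt} = \|\sigma_s(x^*)\|_{\rt} = \|x^*\|_{\rt}$. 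Defining $y$ by $\langle y\xi,\eta\rangle := \int_{\mathbb{R}} g(s)\langle \sigma_s(x)\xi,\eta\rangle\,ds$, the mass-$1$ estimate produces a bounded operator with $\|y\| \leq \|x\|$, and $y$ commutes with $M'$ since each $\sigma_s(x)$ does, so $y \in M$. By the previous paragraph $y\om = \int_{\mathbb{R}} g(s)\,\Delta^{is}(x\om)\,ds = h_a(\log(\Delta))x\om$. Finally, setting $z := \int_{\mathbb{R}} g(s)\,\pi'(\sigma_s(x))\,ds \in M'$ (bounded, with $\|z\| \leq \|x\|_{\rt}$, by the right-norm estimates), one has $z\om = \int_{\mathbb{R}} g(s)\,\sigma_s(x)\om\,ds = y\om$, so $\pi'(y) = z$ by Lemma \ref{boundedlemma} and thus $\|y\|_{\rt} \leq \|x\|_{\rt}$; the same argument applied to $x^*$ gives $\|y^*\|_{\rt} \leq \|x^*\|_{\rt}$. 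Hence $y \in M_{\tb}$ with $\|y\| \leq \|x\|$ and $\|y\|_{\rt} \leq \|x\|_{\rt}$ (and $y$ is the unique such element, as $\om$ is separating for $M$).

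The conceptual point — that $h_a(\log(\Delta))$ is an average of modular unitaries with weight of total mass exactly $1$ — makes the sharp bounds essentially automatic, so there is no deep obstacle; the care required is purely bookkeeping. One must get the Fourier transform of $1/\cosh$ and the normalization $\int|g| = 1$ exactly right (so the constants come out sharp rather than off by a factor), and one must justify that the weak operator integral defining $y$ lands in $M$, intertwines correctly with evaluation at $\om$ and with $\pi'$, and that the Fubini step against the spectral measure is legitimate. It is worth noting that $h_a(\log(\Delta)) = e^a(\Delta - ie^a)^{-1} + e^a(\Delta + ie^a)^{-1}$, so one could instead try to read off $y \in M_{\tb}$ from the bridging lemma (Fact \ref{bridge}) and Corollary \ref{bridgetotally}; but that route yields only non-sharp constants, whereas the averaging argument above delivers the required inequalities $\|y\| \leq \|x\|$ and $\|y\|_{\rt} \leq \|x\|_{\rt}$.
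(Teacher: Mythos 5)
Your proof is correct, but it takes a genuinely different route from the paper, and your closing dismissal of the resolvent route is based on the wrong decomposition. The paper's proof is two lines: it factors
\[
h_{a}(\log(\Delta)) = 2(e^{-a}\Delta + e^{a}\Delta^{-1})^{-1} = 2i(\Delta + ie^{a}I)^{-1}(\Delta^{-1}+ie^{-a}I)^{-1}
\]
as a \emph{product} of two resolvents (not the additive partial-fraction splitting $e^a(\Delta - ie^a)^{-1} + e^a(\Delta + ie^a)^{-1}$ you mention) and applies Corollary \ref{bridgetotally} twice; since the first resolvent carries $M\om$ into $M'\om$ and the second carries it back, and the product of the two bridging constants is $\frac{1}{\sqrt{2e^{-a}}}\cdot\frac{1}{\sqrt{2e^{a}}}=\frac12$, the prefactor $2$ cancels exactly and the sharp bounds $\|y\|\leq\|x\|$, $\|y\|_{\rt}\leq\|x\|_{\rt}$ come out of Fact \ref{bridge} with no loss -- it is only the sum decomposition that loses a factor of $\sqrt{2e^a}$. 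Your alternative -- writing $h_a(\log(\Delta))=\int g(s)\Delta^{is}\,ds$ with $g(s)=\tfrac12 e^{-ias}/\cosh(\pi s/2)$ of total mass $1$ and averaging $\sigma_s(x)$ -- is sound: the Fourier computation is right, the weak-integral bookkeeping (landing in $M''=M$, intertwining with evaluation at $\om$ and with $\pi'$) is handled correctly, and the norm preservation $\|\sigma_s(x)\|=\|x\|$, $\|\sigma_s(x)\|_{\rt}=\|x\|_{\rt}$ that you derive from Fact \ref{515} and Theorem \ref{TT} is exactly Proposition \ref{Deltapreserves}, which the paper proves later for other purposes (there is no circularity, since that proposition rests only on quoted Tomita--Takesaki facts). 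What the averaging argument buys is conceptual transparency -- the sharp constants are visible as a mass-one average -- at the cost of invoking the modular unitaries and a Bochner/Fubini argument; what the paper's factorization buys is brevity and the fact that it stays entirely within the bounded Rieffel--Van Daele resolvent framework already set up in Section \ref{SectionRvD}.
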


\begin{proof}
Note that
    \[h_{a}(\log(\Delta)) = 2(e^{-a}\Delta + e^{a}\Delta^{-1})^{-1} = 2i(\Delta + ie^{a}I)^{-1}(\Delta^{-1}+ie^{-a}I)^{-1}.\]
The proposition then follows from two applications of Corollary \ref{bridgetotally}.
\end{proof}

The following easy calculation will prove useful in the next section:

\begin{lem}\label{calculation}
For $x,y\in M$, we have $h_a(\log(\Delta))x\om = y\om$ if and only if $$2R(2-R)x\om = (e^{-a}(2-R)^2 + e^{a}R^2)y\om.$$ 
\end{lem}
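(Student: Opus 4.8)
The plan is to prove Lemma \ref{calculation} by a direct spectral-calculus computation, rewriting everything in terms of the bounded operator $R$ using the Rieffel--Van Daele dictionary from Section \ref{SectionRvD}. Recall that $\Delta^{it} = (2-R)^{it}R^{-it}$, so that on the appropriate spectral subspaces $\Delta = R^{-1}(2-R)$ and $\Delta^{-1} = (2-R)^{-1}R$; since $\operatorname{Spec}(R)\subseteq(0,2)$, both $R$ and $2-R$ are positive injective bounded operators, and $R^{-1}$, $(2-R)^{-1}$ are well-defined (unbounded) positive operators commuting with $R$. The key point is that on $M\omega$ the operators $\Delta$, $R$, $2-R$ all interact concretely via Facts \ref{RvDfacts}(6)--(7), but here I only need the purely operator-theoretic identity, valid on the relevant dense domain.

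First I would write, starting from the formula $h_a(\log(\Delta)) = 2(e^{-a}\Delta + e^{a}\Delta^{-1})^{-1}$ established in the proof of Proposition \ref{hatb},
\[
h_a(\log(\Delta)) = 2\bigl(e^{-a}R^{-1}(2-R) + e^{a}(2-R)^{-1}R\bigr)^{-1}.
\]
Then I would factor out $R^{-1}(2-R)^{-1}$ (which commutes with everything in sight, since all operators are functions of $R$) from inside the inverse:
\[
e^{-a}R^{-1}(2-R) + e^{a}(2-R)^{-1}R = R^{-1}(2-R)^{-1}\bigl(e^{-a}(2-R)^2 + e^{a}R^2\bigr).
\]
Taking inverses and substituting back gives
\[
h_a(\log(\Delta)) = 2R(2-R)\bigl(e^{-a}(2-R)^2 + e^{a}R^2\bigr)^{-1}.
\]
Hence $h_a(\log(\Delta))x\om = y\om$ is equivalent to $2R(2-R)\bigl(e^{-a}(2-R)^2 + e^{a}R^2\bigr)^{-1}x\om = y\om$, and applying the (bounded, since spectrally it is continuous on $(0,2)$) operator $e^{-a}(2-R)^2 + e^{a}R^2$ to both sides — and noting this operator is injective, so no information is lost — yields $2R(2-R)x\om = (e^{-a}(2-R)^2 + e^{a}R^2)y\om$, which is exactly the claimed equivalence.

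The main obstacle is purely bookkeeping about domains: $\Delta$ and $R^{-1}$ are unbounded, so the algebraic manipulations above must be justified on the correct dense domain, and one must check that $x\om$ and $y\om$ lie in the relevant domains. This is handled by the observations that $h_a(\log(\Delta))$ is a bounded operator (its symbol $h_a$ is bounded on all of $\mathbb{R}$), that $e^{-a}(2-R)^2 + e^{a}R^2$ is a bounded operator with bounded inverse — indeed, its spectrum is contained in $[\,c,\,C\,]$ for positive constants $c = \min_{s\in[0,2]}(e^{-a}(2-s)^2 + e^{a}s^2) > 0$ and $C = \max$ of the same over $[0,2]$ — and that $2R(2-R)$ is bounded. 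So every operator appearing after the first rewriting is bounded, the factorization of functions of $R$ is legitimate by the functional calculus, and the equivalence in the statement is just applying a bounded invertible operator to both sides of a vector equation. I would present the argument as the short chain of functional-calculus identities above, remarking at the end that all manipulations take place within the bounded functional calculus for $R$ and that injectivity of $e^{-a}(2-R)^2 + e^{a}R^2$ gives the ``only if'' direction while the ``if'' direction is immediate upon multiplying by its inverse.
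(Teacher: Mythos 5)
Your computation is correct and is precisely the ``easy calculation'' the paper omits: substituting $\Delta = R^{-1}(2-R)$ into $h_a(\log(\Delta)) = 2(e^{-a}\Delta + e^{a}\Delta^{-1})^{-1}$ and clearing denominators via the commuting bounded functional calculus of $R$ gives $h_a(\log(\Delta)) = 2R(2-R)\bigl(e^{-a}(2-R)^2 + e^{a}R^2\bigr)^{-1}$, and your observation that $e^{-a}(2-R)^2 + e^{a}R^2$ is bounded below by $4/(e^a+e^{-a})>0$ on the spectrum of $R$ justifies passing the inverse to the other side in both directions. Nothing further is needed.
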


The following is the main result of this section:

\begin{thm}\label{characterization}
Suppose that $M_0$ is a strongly dense $*$-subalgebra of $M$ such that:
\begin{enumerate}
    \item $M_0\subseteq M_{\tb}$.
    \item $S_1(M_0)$ is $\|\cdot\|_\varphi^\#$-complete.
    \item $h_a(\log \Delta)(M_0)\subseteq M_0$ for all $a\in \mathbb{N}$. 
\end{enumerate}
Then $M_0=M_{\tb}$.
\end{thm}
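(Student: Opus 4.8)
The inclusion $M_0\subseteq M_{\tb}$ is hypothesis~(1), so the content of the theorem is the reverse inclusion $M_{\tb}\subseteq M_0$. I would organize the proof around two reductions and then a smoothing construction in the spirit of Kadison.

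\emph{Reductions.} Since $M_0$ is a complex $^*$-subalgebra, multiplication by a scalar $K\ge 1$ is a $\|\cdot\|_\vp^\#$-homeomorphism of $M$ taking $S_1(M_0)$ onto $S_K(M_0)$; hence hypothesis~(2) upgrades to the statement that \emph{every} $S_K(M_0)$ is $\|\cdot\|_\vp^\#$-complete, and therefore $\|\cdot\|_\vp^\#$-closed inside $S_K(M)$ (which is itself complete, by the lemma of Section~\ref{SectionPrelim} stating that each $S_K(M)$ is $\|\cdot\|_\vp^\#$-complete). The same scaling shows it is enough to prove $S_1(M)\subseteq M_0$, for then $S_K(M)=K\cdot S_1(M)\subseteq M_0$ for all $K$ and $M_{\tb}=\bigcup_K S_K(M)$. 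So fix $x\in S_1(M)$; the goal becomes to produce one radius $K'=K'(x)$ and a sequence in $S_{K'}(M_0)$ converging to $x$ in $\|\cdot\|_\vp^\#$, after which completeness of $S_{K'}(M_0)$ finishes the job.

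\emph{The smoothing construction.} The role of hypothesis~(3) is to let us replace $x$ by its spectral smoothings relative to the modular operator $\Delta$ while remaining inside $M_0$, and the role of the Rieffel--Van Daele estimates is to keep those smoothings inside a single sort. By Proposition~\ref{hatb}, each $h_a(\log\Delta)$ with $a\in\mathbb{N}$ is a $\|\cdot\|_\vp^\#$-contraction mapping $S_1(M)$ into $S_1(M)$ --- crucially, its bound on the operator norm does not see the right norm, while its bound on the right norm is preserved by Corollary~\ref{bridgetotally} --- and by~(3) it maps $M_0$ into $M_0$. By iteration the same stability persists for finite products $h_{a_1}(\log\Delta)\cdots h_{a_k}(\log\Delta)$ and, up to an $\ell^1$-factor in the coefficients, for finite linear combinations of such products. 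Since the functions $\{h_a:a\in\mathbb{N}\}$ separate the points of $\mathbb{R}$ and vanish nowhere, the real subalgebra they generate is uniformly dense in $C_0(\mathbb{R})$; combining this with the stability above and the closedness from the Reductions, one wants to conclude that $M_0$ is invariant under $f(\log\Delta)$ for a family of functions $f_n$ with $f_n\to 1$ (bounded pointwise) for which the elements $f_n(\log\Delta)x$ all remain in one sort $S_{K'}(M_0)$. Given such $f_n$, one gets $f_n(\log\Delta)x\to x$ in $\|\cdot\|_\vp^\#$: the first coordinate of the $\#$-norm converges because $f_n(\log\Delta)\to I$ strongly on $H_\vp$, and the second by applying the same to $x^*\om$, using that real functions of $\log\Delta$ commute with the passage to adjoints of elements of $M$ (cf. the remark after Fact~\ref{515}). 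Completeness of $S_{K'}(M_0)$ then forces $x\in M_0$.

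\emph{Main obstacle.} The heart of the matter --- and exactly the difficulty of ``handling left and right norms simultaneously'' advertised in the introduction --- lies in that last step: performing a pointwise approximation of $1$ while keeping every $f_n(\log\Delta)x$ totally $K'$-bounded for a \emph{fixed} $K'$. One cannot simply smooth an arbitrary strongly convergent sequence $\tilde c_n\to x$ from $M_0$, because $h_a(\log\Delta)\tilde c_n$ inherits only the uncontrolled right norm of $\tilde c_n$; instead one must exploit that $x$ is \emph{itself} totally bounded, so that Proposition~\ref{hatb} and Corollary~\ref{bridgetotally} deliver the smoothings of $x$ (not of $\tilde c_n$) inside a fixed sort, and then reconcile this with the requirement $f_n\to 1$. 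Making the phrase ``a family of functions $f_n$'' precise --- i.e. producing an honest approximation of the identity using functions for which the \emph{sharp} resolvent-type right-norm estimates of Corollary~\ref{bridgetotally} remain available, rather than merely the cruder Wiener-type bounds that a generic $f(\log\Delta)$ affords --- is where I expect the real work to be. I would expect it to rely on Lemma~\ref{boundedlemma} (to recognize when a smoothed vector $f(\log\Delta)x\om$ is of the form $b\om$ with $b$ right bounded) together with the explicit resolvent factorization of $h_a(\log\Delta)$ used in the proof of Proposition~\ref{hatb}, possibly rearranged so that the bridging estimates apply at each intermediate stage.
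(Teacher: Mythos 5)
Your setup (the scaling reduction to $S_1$, the observation that $h_a(\log\Delta)$ preserves each sort $S_K(M)$ by Proposition~\ref{hatb}, and the identification of the central difficulty) is sound, but the proposal stops exactly where the theorem's content begins: you explicitly defer ``producing an honest approximation of the identity ... where I expect the real work to be,'' and the route you gesture at --- uniform density of the algebra generated by $\{h_a\}$ in $C_0(\mathbb{R})$ --- does not by itself close the gap. Uniform approximation of $f\in C_0(\mathbb{R})$ by polynomials in the $h_a$'s only gives $\|\cdot\|_\vp$-convergence of $f(\log\Delta)x\om$ by vectors in $M_0\om$; to land in $M_0\om$ you need the approximating sequence confined to a single sort $S_{K'}(M_0)$, and sup-norm closeness of the symbols gives no such control on right norms. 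Worse, no $f\in C_0(\mathbb{R})$ tends to $1$, so even granting closure under all of $C_0$ you still have no approximate identity whose outputs are of the form $b\om$ with $b$ totally bounded in a fixed sort.

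The paper resolves this with two ideas absent from your proposal. First, it proves the theorem on compact spectral subspaces: if $x\om\in E_j$ (the spectral subspace of $\Delta$ for $(j^{-1},j)$), then $x\in M_0$. This uses Kadison's expansion $e^{-|t|}=\sum_n a_n\cosh(t)^{-(2n-1)}$ with positive coefficients summing to $1$ --- so that every partial sum applied to a totally $K$-bounded element stays totally $K$-bounded, which is how sort control survives the limit --- to get closure of $M_0$ under $f_a(\log\Delta)$ and $g_a(\log\Delta)$; it then splits $E_j=E_-\oplus E_c\oplus E_+$ and inverts $g_j(\log\Delta)$ on each piece via geometric series in $\Delta^{\pm 2}$, using that the spectrum is pinned in a compact interval. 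Second, the passage from compact spectral subspaces to all of $M_{\tb}$ is \emph{not} done by an approximate identity built from the $h_a$'s, but by Gaussian smoothing along the modular flow, $v_r=\sqrt{r/\pi}\int_{\mathbb{R}}e^{-rt^2}\Delta^{it}v\,dt$: because this averages the maps $x\mapsto\sigma_t^\vp(x)$, which preserve operator norm and both right norms (Fact~\ref{515}, Proposition~\ref{Deltapreserves}), the smoothed elements $x_r$ stay totally $K$-bounded, they lie in compact spectral subspaces since $z\mapsto v_r(z)$ is entire of Gaussian decay, and $x_r\to x$ strong$^*$ as $r\to 0$. Those two mechanisms --- positivity of the series coefficients and averaging over the modular group --- are precisely what reconcile ``$f_n\to 1$'' with ``fixed sort,'' and they need to be supplied for the proof to go through.
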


\begin{remark}
    At this point, the stipulation of closure of $M_0$ under $h_a(\log(\Delta)$ may appear to be lacking a clear motivation. However, given a W$^*$-probability space $(M,\vp)$, the restriction of the spectral subspaces of $\log(\Delta)$ to $M\om$ are dense and can be seen to define the spectral subspaces $M(\sg^\vp, [a,b])$ of $(M,\vp)$ by 
    \[M(\sg^\vp,[a,b]) = \{x\in M : \chi_{[a,b]}(\log(\Delta))x\om = x\om\}.\]
    The spectral subspaces have been a crucial tool in the study of the structure of W$^*$-probability spaces since the seminal work of Connes \cite{connes}. In this light, it is natural to want that $M_0$ densely intersects the spectral subspaces, and we have seen that ``smoothed out'' approximations $h_a(\log(\Delta))$ to $\log(\Delta)$ can be expressed in the bounded framework of Rieffel and Van Daele on which our axiomatic treatment of W$^*$-probability spaces will rely. Subsequently, we will crucially use the analysis of the spectral subspaces of an ultraproduct of W$^*$-probability spaces developed by Ando and Haagerup \cite{AH} to show that that Ocneanu ultraproduct agrees with the ultraproduct given by this axiomatization.
\end{remark}

We prove Theorem \ref{characterization} in a series of steps.  First, for each $a\in \mathbb{R}$, we introduce two more functions:

$$f_{a}(t) = e^{-|t-a|}$$ and
$$ g_a(t) = e^{-|t|} - \frac{e^{-|t-a|} + e^{-|t+a|}}{e^a + e^{-a}}.$$

The following is \cite[Lemma 4.11]{Kadison}:
\begin{fact}
    For all $a \in \mathbb{R}$ and $x \in M$, there exists $y \in M$ such that $f_{a}(\log(\Delta))x\om = y\om$. Furthermore, $\|y\| \leq \|x\|$.
\end{fact}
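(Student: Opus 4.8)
The plan is to realize $f_a(\log(\Delta))$ as an average of the modular automorphisms against a Cauchy--Poisson kernel. The starting point is the elementary contour/Fourier identity
\[
e^{-|s|} \;=\; \frac{1}{\pi}\int_{\mathbb{R}}\frac{e^{is\xi}}{1+\xi^2}\,d\xi \qquad (s\in\mathbb{R}),
\]
which rewrites $f_a(t)=e^{-|t-a|}=\frac{1}{\pi}\int_{\mathbb{R}}e^{-ia\xi}\,\frac{e^{i\xi t}}{1+\xi^2}\,d\xi$. Since $\log(\Delta)$ is a self-adjoint operator with $\Delta^{i\xi}=e^{i\xi\log(\Delta)}$, the spectral theorem then suggests $f_a(\log(\Delta))=\frac{1}{\pi}\int_{\mathbb{R}}e^{-ia\xi}(1+\xi^2)^{-1}\Delta^{i\xi}\,d\xi$, the integral converging strongly on $H$; the whole point will be to see that this averaging operation sends $M\om$ into $M\om$ without increasing operator norm.

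First I would record, via Fact \ref{515}, that $\Delta^{i\xi}(x\om)=\sigma^\vp_\xi(x)\om$ with $\|\sigma^\vp_\xi(x)\|=\|x\|$ for every $\xi\in\mathbb{R}$; in particular $\Delta^{i\xi}x\om$ lies in the set $C:=\{b\om : b\in M,\ \|b\|\le\|x\|\}\subseteq H$, which is convex and norm-closed, being the image of the $\sigma$-weakly compact ball $\{b\in M:\|b\|\le\|x\|\}$ under the $\sigma$-weak-to-weak continuous map $b\mapsto b\om$, hence weakly compact. Next I would introduce the $H$-valued map $F(\xi):=e^{-ia\xi}\sigma^\vp_\xi(x)\om=e^{-ia\xi}\Delta^{i\xi}x\om$, which is norm-continuous, bounded by $\|x\|$, and takes all its values in $C$, and form the Bochner integral $\eta_0:=\frac{1}{\pi}\int_{\mathbb{R}}F(\xi)(1+\xi^2)^{-1}\,d\xi$ against the probability measure $\frac{d\xi}{\pi(1+\xi^2)}$. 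Since the barycenter of a probability measure lies in the closed convex hull of the range of the integrand, $\eta_0\in C$; that is, $\eta_0=y\om$ for a (unique) $y\in M$ with $\|y\|\le\|x\|$. (The bound on $\|y\|_{\rt}$, if desired, comes out of the same computation applied on the $M'$ side, exactly as in Proposition \ref{hatb}.)

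It remains to identify $\eta_0$ with $f_a(\log(\Delta))x\om$. Fix $\zeta\in H$ and let $\mu_\zeta$ be the finite complex measure $B\mapsto\langle E(B)x\om,\zeta\rangle$ associated with the spectral measure $E$ of $\log(\Delta)$, so that $\langle\Delta^{i\xi}x\om,\zeta\rangle=\int_{\mathbb{R}}e^{i\xi\lambda}\,d\mu_\zeta(\lambda)$ and $|\mu_\zeta|(\mathbb{R})\le\|x\om\|\,\|\zeta\|$. Pairing $\eta_0$ against $\zeta$, interchanging the two integrals by Fubini (valid since $(1+\xi^2)^{-1}$ is integrable and $\mu_\zeta$ is finite), and invoking the Fourier identity in the form $\frac{1}{\pi}\int_{\mathbb{R}}\frac{e^{i\xi(\lambda-a)}}{1+\xi^2}\,d\xi=e^{-|\lambda-a|}$, one obtains $\langle\eta_0,\zeta\rangle=\int_{\mathbb{R}}f_a(\lambda)\,d\mu_\zeta(\lambda)=\langle f_a(\log(\Delta))x\om,\zeta\rangle$. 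As $\zeta$ was arbitrary, $f_a(\log(\Delta))x\om=\eta_0=y\om$, as desired.

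The only genuinely delicate point is the bookkeeping surrounding the set $C$: one must verify it is norm-closed (hence that the barycenter $\eta_0$ really has the form $y\om$ with $\|y\|\le\|x\|$), which is where the weak-compactness argument is needed. An alternative that bypasses this entirely is to define $y\in M$ directly as the $\sigma$-weak (ultraweak) integral $\frac{1}{\pi}\int_{\mathbb{R}}e^{-ia\xi}\sigma^\vp_\xi(x)(1+\xi^2)^{-1}\,d\xi$, using that $\xi\mapsto\sigma^\vp_\xi(x)$ is $\sigma$-weakly continuous and norm-bounded by $\|x\|$; this makes the bound $\|y\|\le\|x\|$ transparent from $\frac{1}{\pi}\int_{\mathbb{R}}(1+\xi^2)^{-1}\,d\xi=1$, and the identity $y\om=f_a(\log(\Delta))x\om$ is checked exactly as in the previous paragraph by testing against the vector functionals $b\mapsto\langle b\om,\zeta\rangle$.
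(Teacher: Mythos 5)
Your argument is correct, but it is genuinely different from the one the paper relies on. The paper simply cites this as \cite[Lemma 4.11]{Kadison}, and the route there (whose key identity the paper reproduces in the proposition that follows) is to expand $e^{-|t|}=\sum_{n}a_n(\cosh t)^{-(2n-1)}$ with $a_n>0$ and $\sum_n a_n=1$, so that $f_a(\log\Delta)$ becomes a convex combination of odd powers of $h_a(\log\Delta)=2(e^{-a}\Delta+e^{a}\Delta^{-1})^{-1}$; each such power preserves $M\om$ with norm control by the resolvent/bridging lemma (Fact \ref{bridge}, as in Proposition \ref{hatb}), and the convexity of the coefficients gives $\|y\|\le\|x\|$. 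You instead write $f_a(\log\Delta)=\frac{1}{\pi}\int_{\mathbb R}e^{-ia\xi}(1+\xi^2)^{-1}\Delta^{i\xi}\,d\xi$ and smear the modular flow against the Cauchy probability kernel, using Fact \ref{515} ($\Delta^{i\xi}x\om=\sigma^\vp_\xi(x)\om$ with $\|\sigma^\vp_\xi(x)\|=\|x\|$) together with the weak compactness of $\{b\om:\|b\|\le\|x\|\}$ to locate the barycenter; your checks (norm continuity of the integrand, Fubini against the spectral measure, the closed-convex-hull property of Bochner integrals) are all in order, and the $\sigma$-weak-integral variant at the end is cleaner still. The trade-off is one of dependencies and of fitness for purpose: your proof presupposes the full Tomita--Takesaki theorem (through Fact \ref{515}), whereas the resolvent route works at a more primitive level; more importantly, the paper needs the $\operatorname{sech}$-series decomposition itself in the next proposition, to deduce that a subalgebra $M_0$ closed under the operators $h_a(\log\Delta)$ is automatically closed under $f_a(\log\Delta)$ --- your Fourier average over $\sigma^\vp_\xi$ would not serve there, since $M_0$ is not assumed invariant under the modular flow. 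As a standalone proof of the stated fact, however, yours is complete and arguably more transparent, since the bound $\|y\|\le\|x\|$ falls out of $\frac{1}{\pi}\int_{\mathbb R}(1+\xi^2)^{-1}\,d\xi=1$.
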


Temporarily, let us call a subalgebra $M_0$ satisfying the hypotheses of Theorem \ref{characterization} \textbf{good}.

\begin{prop}
    If $M_0$ is a good subalgebra of $M$, then for all $a\in \mathbb{N}$ and $x\in M_0$, there are $y,z\in M_0$ such that $f_a(\log(\Delta))(x\om)=y\om$ and $g_a(\log(\Delta))(x\om)=z\om$. 
\end{prop}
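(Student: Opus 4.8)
The goal is to show that a good subalgebra $M_0$ is closed not only under $h_a(\log(\Delta))$ but also under $f_a(\log(\Delta))$ and $g_a(\log(\Delta))$ for $a \in \mathbb{N}$. The strategy is to express $f_a$ and $g_a$ as limits (in an appropriate sense) of combinations of the functions $h_b$, and then to use the $\|\cdot\|_\vp^\#$-completeness of $S_1(M_0)$ to conclude that the limiting elements land back in $M_0$. First I would record the pointwise/uniform behavior of the relevant functions on $\mathbb{R}$: each of $f_a$, $g_a$, $h_b$ is bounded by $1$ in sup-norm, so the corresponding operators $f_a(\log\Delta)$ etc.\ are contractions on $H_\vp$, and by the Fact just quoted (resp.\ Proposition \ref{hatb}) they map $M\om$ into $M\om$ with the stated norm control $\|y\|\le\|x\|$, $\|y\|_{\rt}\le\|x\|_{\rt}$ in the $h_b$ case.

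**Key steps.** The heart of the matter is a function-theoretic identity: I would look for coefficients $c_0, c_1, c_2, \dots \ge 0$ (or absolutely summable) so that
\[
f_a(t) \;=\; \sum_{b} c_b\, h_b(t) \qquad \text{uniformly in } t,
\]
i.e.\ approximating $e^{-|t-a|}$ by a series of translates/dilates of $1/\cosh$. One natural route: since $h_b(t) = 2/(e^{t-b}+e^{b-t})$ decays like $2e^{-|t-b|}$ and is itself exponentially-tailed, a finite or rapidly-converging linear combination of $h_b$'s over $b \in \mathbb{N}$ (or half-integers, but the hypothesis only gives $b\in\mathbb{N}$) should reproduce $f_a$ up to uniformly small error; then iterate to get a genuine uniformly convergent series $\sum_b c_b h_b$. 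Once such an identity holds, apply it to a fixed $x \in M_0$: the partial sums $y_N := \sum_{b\le N} c_b\, (\text{the element of } M_0 \text{ with } h_b(\log\Delta)x\om = \cdot\,\om)$ lie in $M_0$ by hypothesis (3) and closure under the algebra operations, and the bound $\|y_b\| \le \|x\|$, $\|y_b\|_{\rt}\le \|x\|_{\rt}$ from Proposition \ref{hatb} shows (after rescaling by $\|x\|_{\tb} := \max(\|x\|,\|x\|_{\rt},\|x^*\|_{\rt})$) that the normalized partial sums sit in $S_1(M_0)$ up to a fixed scalar, form a $\|\cdot\|_\vp^\#$-Cauchy sequence (the tail $\sum_{b>N}c_b h_b(\log\Delta)x\om$ has $\|\cdot\|_\vp$-norm $\le (\sum_{b>N}c_b)\|x\|_\vp \to 0$, and similarly for the adjoint since $h_b(\log\Delta)(a^*\om) = (h_b(\log\Delta)a\om)^*$, giving control of $\|\cdot\|_\vp^\#$), and therefore converge in $S_1(M_0)$ by hypothesis (2). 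The limit $y$ satisfies $y\om = f_a(\log\Delta)x\om$ in $H_\vp$, and since $\om$ is separating, $y$ is the unique such element; hence $y \in M_0$. The argument for $g_a$ is identical, either by finding a direct uniform expansion $g_a = \sum_b d_b h_b$, or simply by observing from the definition that $g_a(t) = f_0(t) - \tfrac{1}{e^a+e^{-a}}\bigl(f_a(t)+f_{-a}(t)\bigr)$ and that $f_{-a}(\log\Delta)$ is handled like $f_a$ — note $f_{-a}(t) = e^{-|t+a|}$, and since $\Delta^{-1}$ plays the role of $\Delta$ when interchanging $M$ and $M'$, or more simply since $-a$ is also (up to sign) a natural number, a symmetric argument applies; then $z := y_0 - \tfrac{1}{e^a+e^{-a}}(y_a + y_{-a}) \in M_0$ is the required element, using that $M_0$ is closed under scalar multiples and subtraction.

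**The main obstacle.** The genuinely hard part is establishing the uniform expansion $f_a = \sum_{b} c_b h_b$ with $b$ restricted to $\mathbb{N}$ and the coefficients absolutely summable — i.e.\ the pure real-analysis/approximation lemma. One must check that the $\cosh^{-1}$ "bumps," which have the wrong curvature near their peaks and only exponential (not compact) tails, can nonetheless be superposed to build the sharper exponential kink $e^{-|t-a|}$; the exponential tails are actually a help (they make the series converge geometrically), but reproducing the behavior near $t=a$ requires care, and one may need to allow $b$ to range over all of $\mathbb{N}$ with, say, a telescoping/geometric choice of $c_b$. If a clean closed-form expansion is elusive, the fallback is an $\epsilon$-approximation argument: show that for each $\epsilon>0$ there is a finite nonnegative combination $\sum c_b h_b$ with $\|f_a - \sum c_b h_b\|_\infty < \epsilon$ (a Stone–Weierstrass-type density statement for the translates of $h_0$ in a suitable function space on $\mathbb{R}$), which still suffices: it produces, for $x \in M_0$, elements of $M_0$ within $\|\cdot\|_\vp^\#$-distance $\epsilon\|x\|_\vp$ of the target vector $f_a(\log\Delta)x\om$, whose preimages under $\om$ are totally $1$-bounded after scaling, and then completeness of $S_1(M_0)$ again delivers the limit inside $M_0$. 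Everything else — the operator bounds, the passage between $\|\cdot\|_\vp$ and $\|\cdot\|_\vp^\#$ via Lemma \ref{hashtagfacts}, the use of the separating vector for uniqueness — is routine given the machinery already developed in Sections \ref{SectionRvD} and \ref{tb}.
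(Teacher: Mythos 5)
There is a genuine gap, and it sits exactly where you locate it: the expansion of $f_a$ in terms of the operators under which a good subalgebra is assumed closed. Your plan is to write $f_a(t)=\sum_b c_b\,h_b(t)$ as a uniformly convergent combination of \emph{translates} $h_b$, $b\in\mathbb{N}$, but this identity is never established and is very unlikely to hold in the form you need. Matching the two exponential tails of $e^{-|t-a|}$ forces the constraints $\sum_b c_b e^{(2k+1)b}=\tfrac12 e^{(2k+1)a}$ and $\sum_b c_b e^{-(2k+1)b}=\tfrac12 e^{-(2k+1)a}$ for every $k\ge 0$ (coming from the expansion $\operatorname{sech}(t-b)=2\sum_k(-1)^k e^{-(2k+1)|t-b|}$ away from the kink), an infinite moment problem over the lattice $\mathbb{N}$ with absolutely summable coefficients; and your fallback via a Wiener/Stone--Weierstrass density argument for translates requires $b$ to range over all of $\mathbb{R}$ (the duality argument kills a measure $\mu$ only if $\check\mu * \operatorname{sech}$ vanishes identically, not merely on $\mathbb{N}$). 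The intended route, from \cite[Lemma 4.11]{Kadison}, is different in kind: one expands in \emph{odd powers} of a single $h_a$, namely $e^{-|s|}=\sum_{n\ge 1}a_n\operatorname{sech}(s)^{2n-1}$ with $a_n>0$ and $\sum a_n=1$ (this is just the Taylor series of $u\mapsto(1-\sqrt{1-u^2})/u$ at $u=\operatorname{sech}(s)$), so that $f_a(\log\Delta)=\sum_n a_n\,(h_a(\log\Delta))^{2n-1}$. Closure under $h_a(\log\Delta)$ for the single value $a$, iterated and combined with the norm bounds of Proposition \ref{hatb}, then produces uniformly totally bounded partial sums, and the completeness argument you describe finishes as in the paper. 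Your reduction of $g_a$ to $f_0,f_a,f_{-a}$ is the right bookkeeping, but note that $f_{-a}$ needs its own justification (via $h_{-a}(\log\Delta)=h_a(\log\Delta^{-1})$ and the $M\leftrightarrow M'$ symmetry), not the remark that ``$-a$ is up to sign a natural number.''

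A secondary error: the identity $h_b(\log\Delta)(x^*\om)=(h_b(\log\Delta)x\om)^*$ that you invoke to control the adjoint half of $\|\cdot\|_\vp^\#$ is false for $b\neq 0$. Writing $h_b(\log\Delta)=\int \widehat{h_b}(t)\Delta^{it}\,dt$ and using $\Delta^{it}x\om=\sigma_t(x)\om$, one finds that if $h_b(\log\Delta)x\om=y\om$ then $y^*\om=h_{-b}(\log\Delta)(x^*\om)$, because $\widehat{h_b}$ is not real-valued. The paper instead controls the adjoints by the resolvent manipulation of Corollary \ref{bridgetotally} (as in Proposition \ref{hatb}, which bounds $\|y\|_{\rt}$ and $\|y^*\|_{\rt}$ simultaneously). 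So both the main analytic identity and the Cauchy-in-$\|\cdot\|_\vp^\#$ step need to be repaired before the completeness hypothesis can be applied.
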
 

\begin{proof}
    Write
    \[
    e^{-|t|} = \sum_{n=0}^{\infty} a_n(\cosh(t))^{-(2n-1)}
    \]
    where each $a_n$ is a positive real and $\sum_{n=0}^{\infty} a_n = 1$ as in the proof of lemma 4.11 in \cite{Kadison}. Plugging in $\log(\Delta)$ gives
    \[
    f_{a}(\log(\Delta)) = \sum_{n=0}^{\infty} a_n(h_{a}(\log(\Delta)))^{2n-1},
    \]
    where the sum is norm-convergent. Thus, for every $x \in M_0$ which is totally $K$-bounded, closure under $h_{a}(\log(\Delta))$ implies the existence of $y_n\in M_0$ such that $y_n\om = (h_{a}(\log(\Delta)))^{2n-1}x$.  Moreover, by repeated application of Proposition \ref{hatb}, each $y_n$ is totally $K$-bounded. Since each $a_n$ is a positive real and $\sum_{n=0}^{\infty} a_n = 1$, setting $b_k:=\sum_{n=0}^k a_n y_n$, we see that each $b_k$ is totally $K$-bounded and $b_k\om$ converges to $f_a(\log(\Delta))x\om$.  Reasoning as in Corollary \ref{bridgetotally} shows that $(b_k)_{k\in \mathbb N}$ is in fact $\|\cdot\|_\varphi^\#$-Cauchy and thus $b_k$ strong-* converges to some $y\in M_0$ by completeness of $S_K(M_0)$. We then have that $f_0(\log(\Delta))(x\om)=y\om$.  The existence of $z\in M_0$ follows from closure under $f_a(\log(\Delta))$ and $f_0(\log(\Delta))$ by considering appropriate linear combinations.
\end{proof}

From now on, $j$ denotes an element of $\mathbb{R}^{>1}$.  We let $E(j^{-1}, j)$ denote the spectral projection of $\Delta$ (and $\Delta^{-1}$) corresponding to the interval $(j^{-1}, j)$ and set $E_j:=E(j^{-1},j)(H_\varphi)$.

The following is \cite[Lemma 4.12]{Kadison}:

\begin{fact}\label{kadison412}
    Suppose that $x \in M$ and $x\om \in E_j$.  Then:
    \begin{enumerate}
        \item For all $n\in \mathbb{Z}$, there is $x_n\in M$ such that $\Delta^n(x\om)=x_n\om$ and $\|x_n\|\leq j^{|n|}\|x\|$.
        \item $x$ is right bounded and $\|x\|_{\rt}\leq j^{1/2}\|x\|$.
    \end{enumerate}
The statement obtained by switching $M$ and $M'$ is also valid. 
\end{fact}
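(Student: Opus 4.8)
The plan is to deduce both parts from the fact that the condition $x\om\in E_j$ forces $x$ to be an analytic element for the modular automorphism group, with an explicit \emph{operator-norm} bound on its analytic continuation. Concretely, I would first establish the following: if $x\in M$ and $x\om\in E(j^{-1},j)$, then $t\mapsto\sigma_t^\vp(x)$ extends to an entire $M$-valued function $z\mapsto\sigma_z^\vp(x)$ with $\sigma_z^\vp(x)\om=\Delta^{iz}x\om$ for all $z\in\mathbb C$ and $\|\sigma_z^\vp(x)\|\le j^{|\operatorname{Im}(z)|}\|x\|$. Granting this, part (1) is just the case $z=-in$: setting $x_n:=\sigma_{-in}^\vp(x)\in M$ we get $x_n\om=\Delta^{i(-in)}x\om=\Delta^{n}x\om$ and $\|x_n\|\le j^{|n|}\|x\|$.

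For part (2) I would first record the following, valid for any $w\in M$ with $w\om\in E_j$: the key input at $z=-i/2$ gives $y:=\sigma_{-i/2}^\vp(w)\in M$ with $y\om=\Delta^{1/2}w\om$ and $\|y\|\le j^{1/2}\|w\|$; since $S$ sends $a\om\mapsto a^*\om$ on $M\om$, $S=J\Delta^{1/2}$, and $J\om=\om$, this yields $w^*\om=S(w\om)=J\Delta^{1/2}w\om=Jy\om=(JyJ)\om$, and by Theorem \ref{TT} we have $JyJ\in JMJ=M'$ with $\|JyJ\|=\|y\|\le j^{1/2}\|w\|$, so Lemma \ref{boundedlemma} gives that $w^*$ is right bounded, $\pi'(w^*)=JyJ$, hence $\|w^*\|_{\rt}\le j^{1/2}\|w\|$. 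Applying this with $w=x$ shows $x^*$ is right bounded with $\|x^*\|_{\rt}\le j^{1/2}\|x\|$. Moreover $x^*\om=J\Delta^{1/2}x\om$ again lies in $E_j$, since $\Delta^{1/2}$ is a function of $\Delta$ and $J$ commutes with every bounded function of $\Delta$ (as $J\Delta^{it}=\Delta^{it}J$ for all $t$), so both preserve the spectral subspace $E(j^{-1},j)$ — here one uses that $(j^{-1},j)$ is invariant under inversion. Applying the observation with $w=x^*$ then shows $x=(x^*)^*$ is right bounded with $\|x\|_{\rt}\le j^{1/2}\|x^*\|=j^{1/2}\|x\|$, giving part (2). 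The statement with $M$ and $M'$ interchanged follows by running the same argument in the standard form of $M'$, for which $S'=F$, $J'=J$, $\Delta'=\Delta^{-1}$ and $E_j$ is unchanged.

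The main obstacle is justifying the key input, which is where the real content lies. Since $\om$ is cyclic and separating, $x\om\in E(j^{-1},j)$ forces $x$ into the spectral subspace of $\sigma^\vp$ associated with $[-\log j,\log j]$ (the Arveson spectrum of $x$ is read off from the spectral measure of $x\om$ with respect to $\log\Delta$), and for elements of a compact spectral subspace the orbit map extends to an entire function with $\|\sigma_z^\vp(x)\|\le\max\!\big(e^{(\log j)\operatorname{Im}(z)},e^{-(\log j)\operatorname{Im}(z)}\big)\|x\|=j^{|\operatorname{Im}(z)|}\|x\|$ by the standard Phragm\'en--Lindel\"of/Paley--Wiener estimate for spectral subspaces of one-parameter automorphism groups (cf.\ \cite{Takesaki}); the real-axis identity $\sigma_t^\vp(x)\om=\Delta^{it}x\om$ with $\|\sigma_t^\vp(x)\|=\|x\|$ of Fact \ref{515} then pins down the continuation and gives $\sigma_z^\vp(x)\om=\Delta^{iz}x\om$. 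The delicate point is that one needs the operator-norm bound rather than merely the trivial $\|\cdot\|_\vp$-bound $\|\Delta^{iz}x\om\|\le j^{|\operatorname{Im}(z)|}\|x\om\|$ coming from the spectral theorem: a naive smearing $\Delta^{iz}=\int g(t)\,\Delta^{it}\,dt$ with $g\in L^1(\mathbb R)$ obtained from a compactly supported approximant of $\lambda\mapsto\lambda^{iz}$ on $[j^{-1},j]$ only yields the constant $\|g\|_1\|x\|$, which necessarily exceeds $j^{|\operatorname{Im}(z)|}\|x\|$, so the sharp spectral-subspace machinery is genuinely needed.
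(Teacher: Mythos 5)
Your argument is correct. Note first that the paper does not prove this statement at all: it is quoted verbatim as \cite[Lemma 4.12]{Kadison}, so there is no in-paper proof to measure you against. What you have written is a faithful reconstruction of the standard argument, and it is the same circle of ideas the paper itself invokes later through Ando--Haagerup (Facts \ref{modularsubspaces}(3),(4)): read the Arveson spectrum of $x$ off the spectral measure of $x\om$ with respect to $\log\Delta$ (legitimate because $\om$ is separating), conclude that $z\mapsto\sigma_z^\vp(x)$ is entire of exponential type $\log j$ with $\sigma_z^\vp(x)\om=\Delta^{iz}x\om$, and extract (1) at $z=-in$ and (2) at $z=\mp i/2$. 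Your handling of the two genuinely delicate points is right: the sharp constant $j^{|\operatorname{Im}z|}$ (rather than an unspecified $C_{a,z}$ as in Facts \ref{modularsubspaces}(3)) does require Phragm\'en--Lindel\"of for functions of exponential type bounded on $\mathbb R$, applied to $\phi(\sigma_z^\vp(x))$ for norm-one functionals $\phi$ --- the Paley--Wiener smearing you dismiss as ``naive'' is still needed, but only to supply the a priori exponential-type estimate that Phragm\'en--Lindel\"of then sharpens to constant $1$ on the real axis; and the invariance of $E_j$ under $J$ does require, as you note, that $(j^{-1},j)$ is inversion-invariant, since $J\chi_B(\Delta)J=\chi_{B^{-1}}(\Delta)$ for anti-linear $J$ (your parenthetical ``$J$ commutes with every bounded function of $\Delta$'' is false as stated, but you only use it for the symmetric interval, where it holds). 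Two small remarks. For part (2) your detour through $x^*$ can be avoided: the identity $\|yx\om\|=\|(JyJ)S\sigma_{i/2}^\vp(x)\om\|=\|\sigma_{i/2}^\vp(x)^*(JyJ)\om\|\le\|\sigma_{i/2}^\vp(x)\|\,\|y\om\|$, which the paper itself uses in the proof of Facts \ref{modularsubspaces}(4), gives $\|x\|_{\rt}\le\|\sigma_{i/2}^\vp(x)\|\le j^{1/2}\|x\|$ in one line. And your one-sentence treatment of the $M\leftrightarrow M'$ statement is fine, but it is worth saying explicitly that $E_j$ is literally the same subspace for $\Delta'=\Delta^{-1}$ precisely because of the same inversion-invariance of $(j^{-1},j)$.
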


\begin{cor}\label{tot}
    Suppose that $x \in M$ and $x\om \in E_j$.  Then $x$ is totally $j^{3/2}\|x\|$-bounded.  
\end{cor}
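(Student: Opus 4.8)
The plan is to bound the three relevant norms of $x$ — the operator norm $\|x\|$, the right norm $\|x\|_{\rt}$, and the right norm $\|x^*\|_{\rt}$ — each by a multiple of $\|x\|$, and then take the largest constant. The operator norm is of course already $\|x\|$, so no work is needed there. For $\|x\|_{\rt}$, Fact \ref{kadison412}(2) applies verbatim to give $\|x\|_{\rt} \leq j^{1/2}\|x\|$, which is comfortably at most $j^{3/2}\|x\|$ since $j > 1$.

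The only remaining point is to control $\|x^*\|_{\rt}$, and for this I would first observe that $x^*\om \in E_j$ as well. This is because the spectral projection $E(j^{-1},j)$ commutes with the modular conjugation $J$ (since $J\Delta J = \Delta^{-1}$ and the interval $(j^{-1},j)$ is symmetric under $t \mapsto t^{-1}$), and $S_\vp = J\Delta^{1/2}$ sends $x\om$ to $x^*\om$; spectral calculus then shows $E(j^{-1},j)$ commutes with $S_\vp$ on its domain, and since $x\om \in E_j$ lies in the domain of $S_\vp$ (it is bounded), we get $x^*\om = S_\vp(x\om) \in E_j$. Alternatively, and perhaps more cleanly, one can invoke Fact \ref{kadison412}(1) with $n = 0$ applied after noting that $x$ being right bounded makes $x^* $ the adjoint in the sense needed, but the $S_\vp$-invariance argument is the transparent one. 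Once $x^*\om \in E_j$, Fact \ref{kadison412}(2) applied to $x^*$ yields $\|x^*\|_{\rt} \leq j^{1/2}\|x^*\| = j^{1/2}\|x\|$.

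Putting these together: $\|x\| \leq j^{3/2}\|x\|$, $\|x\|_{\rt} \leq j^{1/2}\|x\| \leq j^{3/2}\|x\|$, and $\|x^*\|_{\rt} \leq j^{1/2}\|x\| \leq j^{3/2}\|x\|$, so by definition $x$ is totally $j^{3/2}\|x\|$-bounded. (The constant $j^{3/2}$ is presumably chosen larger than the $j^{1/2}$ we actually get because it is the bound that will be convenient in a later application, e.g.\ when iterating or composing with operators that pick up an extra factor of $j$; the weaker bound $j^{1/2}\|x\|$ suffices here.) The main — and really only — obstacle is the verification that $x^*\om$ again lies in the symmetric spectral subspace $E_j$; everything else is an immediate citation of Fact \ref{kadison412}.
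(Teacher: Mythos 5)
Your proof is correct, but it reaches the bound on $\|x^*\|_{\rt}$ by a genuinely different route than the paper. You show that $x^*\om\in E_j$ directly: since $J\Delta J=\Delta^{-1}$ and the interval $(j^{-1},j)$ is invariant under $t\mapsto t^{-1}$, the projection $E(j^{-1},j)$ commutes with $J$ and with $\Delta^{1/2}$, hence with $S=J\Delta^{1/2}$ on $E_j$ (which lies in the domain of $\Delta^{1/2}$), so $x^*\om=Sx\om\in E_j$ and Fact \ref{kadison412}(2) applied to $x^*$ gives $\|x^*\|_{\rt}\leq j^{1/2}\|x\|$. The paper never establishes $x^*\om\in E_j$; instead it uses Fact \ref{kadison412}(1) to write $\Delta x\om=x_1\om$ with $\|x_1\|\leq j\|x\|$, uses $\Delta$-invariance of $E_j$ to get $\|x_1\|_{\rt}\leq j^{3/2}\|x\|$, and then exploits $x^*\om=Sx\om=F\Delta x\om$ together with $\pi'(Fy\om)=\pi'(y)^*$ for $y\in M'$ to conclude $\|x^*\|_{\rt}=\|\pi'(\Delta x\om)\|\leq j^{3/2}\|x\|$. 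Your argument is cleaner and yields the sharper conclusion that $x$ is totally $j^{1/2}\|x\|$-bounded; the constant $j^{3/2}$ in the statement is simply what the paper's own chain of estimates produces (and is then cited as such in Lemma \ref{landsin} and Theorem \ref{Dcore}), not a bound reserved for a later application as you speculate. One small caveat: your parenthetical ``alternative'' via Fact \ref{kadison412}(1) with $n=0$ does not actually give $x^*\om\in E_j$, but since you rely on the $S$-invariance argument, which is complete, this is not a gap.
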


\begin{proof}
    By Fact \ref{kadison412}, $\|x\|_{\rt} \leq j^{\frac{1}{2}}\|x\|$.  By Fact \ref{kadison412} again, we may write $\Delta(x\om)=x_1\om$ with $x_1\in M$ and $\|x_1\|\leq j\|x\|$.  Since $E_j$ is invariant under $\Delta$, we may apply the previous fact again to conclude that $\|x_1\|_{\rt}\leq j^{1/2}\|x_1\|\leq j^{3/2}\|x\|$.  Note that $\Delta x\om \in M'\om$ by the previous fact and thus $Sx\om=F\Delta x\om$. Since $\pi'(Fy\om) = \pi'(y)^*$ for all $y\in M'$, it follows that 
    \[
    \|x^*\|_{\rt}=\|\pi'(Sx\om)\| = \|\pi'(Sx\om)^*\| = \|\pi'(FSx\om)\| = \|\pi'(\Delta x\om)\| \leq j^{\frac{3}{2}}\|x\|.
    \]
This proves the corollary.
\end{proof}

Our next main goal is the following, which by Corollary \ref{tot}, is a special case of Theorem \ref{characterization}: 

\begin{thm}\label{Dcore}
Suppose that $M_0$ is a good subalgebra.  If $x\in M$ is such that $x\om \in E_j$, then $x \in M_0$.
\end{thm}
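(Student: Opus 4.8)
\textbf{Proof plan for Theorem \ref{Dcore}.}

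The plan is to show that any $x \in M$ with $x\om \in E_j$ lies in the strong-$*$ closure of a sequence from $S_K(M_0)$ for an appropriate fixed $K$, and then invoke hypothesis (2) of goodness (completeness of $S_1(M_0)$, scaled) to conclude $x \in M_0$. The first step is to reduce to this approximation problem: by Corollary \ref{tot} we already know $x \in S_{j^{3/2}\|x\|}(M)$, so it suffices to produce elements $b_k \in M_0$, all totally $K$-bounded for a uniform $K$ depending only on $j$ and $\|x\|$, with $b_k \om \to x\om$ in $\|\cdot\|_\vp^\#$; completeness of $S_K(M_0)$ (which follows from completeness of $S_1(M_0)$ by rescaling, together with the fact that $M_0$ is a $*$-subalgebra closed under scalar multiplication) then forces the limit $x$ to be in $M_0$. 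So the real work is to approximate the single vector $x\om$ by vectors of the form $b_k\om$ with $b_k$ drawn from $M_0$ and with uniformly controlled total bounds.

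The idea for constructing these approximants is to exploit that $M_0$ is strongly dense, that $M_0$ is closed under the operators $h_a(\log\Delta)$ (hence, by the preceding proposition, under $f_a(\log\Delta)$ and $g_a(\log\Delta)$), and that on the spectral subspace $E_j$ these smoothed operators act in a controlled, essentially invertible way. Concretely: pick any sequence $c_k \in M_0$ with $c_k\om \to x\om$ strongly (strong density). Now apply a suitable spectral-type operator $\Phi = \Phi(\log\Delta)$ built out of the $f_a$'s and $g_a$'s — following Kadison's Lemma 4.12 argument — that is designed to (a) map $M_0$ into $M_0$, (b) be supported (up to small error) on a neighborhood of $(j^{-1},j)$, and (c) be close to the identity on $E_j$. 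Setting $b_k := \Phi(\log\Delta)(c_k)$ — which lies in $M_0$ by closure — we get $b_k\om = \Phi(\log\Delta)(c_k\om) \to \Phi(\log\Delta)(x\om)$, and since $x\om \in E_j$ and $\Phi \approx 1$ there, this limit is close to $x\om$; iterating with a better and better choice of $\Phi$ (shrinking the error) and a diagonal argument yields $b_k\om \to x\om$. The crucial point is that because $b_k\om$ lies (approximately) in the range of a spectral cutoff near $(j^{-1},j)$, Fact \ref{kadison412} and Corollary \ref{tot} give a uniform total bound $\|b_k\|_{\tb} \leq K$ with $K$ depending only on $j$ and $\sup_k\|c_k\|$ (which we may take bounded by $\|x\|+1$); one must also verify the $\|\cdot\|_\vp^\#$-Cauchy condition, which, as in the proof of Corollary \ref{bridgetotally} and the good-subalgebra proposition, follows from the strong convergence together with the uniform total bounds (using that on totally $K$-bounded sets $\|\cdot\|_\vp$ and $\|\cdot\|_\vp^\#$ are equivalent, Lemma \ref{hashtagfacts}).

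The main obstacle I expect is the bookkeeping needed to build the operator $\Phi$ from the available functions $f_a, g_a, h_a$ so that it simultaneously: stays within the algebra generated (under composition/linear combination) by operators under which $M_0$ is provably closed; approximates the identity arbitrarily well on $E_j$; and has its "tails" (the part of the spectrum outside a slightly larger interval $(j^{-1-\e}, j+\e)$) made negligible — all while keeping the total-bound estimate uniform in $k$. This is precisely the content of Kadison's argument in \cite[Lemma 4.12]{Kadison}, adapted to keep track of the right norms in addition to the operator norm; the right-norm control is exactly where our Corollary \ref{tot} and Corollary \ref{bridgetotally}-style reasoning must be inserted, and where the non-tracial difficulty (handling left and right simultaneously) concentrates. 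Once $\Phi$ is in hand the rest is the routine completeness argument sketched above.
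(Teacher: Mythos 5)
Your high-level frame (produce approximants in $M_0$ with a uniform total bound $K$ and invoke completeness of $S_K(M_0)$) matches the paper, but the core step of your plan has a genuine gap. You propose to build an operator $\Phi(\log\Delta)$ from the functions $f_a,g_a,h_a$ that maps $M_0$ into $M_0$, is spectrally supported near $(j^{-1},j)$, and is \emph{close to the identity on $E_j$}, and you defer this construction to ``the content of Kadison's Lemma 4.12.'' That lemma (Fact \ref{kadison412} in the paper) does no such thing: it only gives norm and right-norm bounds for $\Delta^n$ on spectrally localized vectors. Meanwhile, none of the available functions is anywhere near $1$ on $(-\log j,\log j)$: $f_a$ and $h_a$ peak only at $t=a$, and $g_a$, while compactly supported on $[-a,a]$ (which is what makes Lemma \ref{landsin} work), decays like $e^{-|t|}$ across the interval, so $g_j(\log\Delta)$ distorts $E_j$ by a factor as bad as $\approx j^{-1}$. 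Manufacturing an approximate identity on $E_j$ from these by sums/products runs into two simultaneous obstructions you cannot wave away: (i) to keep the uniform total bound via Corollary \ref{tot} you need the output to lie in a genuinely compact spectral subspace, which rules out non-compactly-supported combinations like normalized sums of the $f_a$'s; and (ii) to keep $M_0$ closed under $\Phi(\log\Delta)$ you need operator-norm control $\|\Phi(\log\Delta)y\|\lesssim\|y\|$ for general $y\in M$, which in Kadison's argument comes from a positivity trick ($e^{-|t|}=\sum a_n(\cosh t)^{-(2n-1)}$ with $a_n>0$, $\sum a_n=1$) that does not extend to arbitrary uniform approximations of a bump function (the $\ell^1$-size of the coefficients is not controlled).

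The paper resolves exactly this difficulty not by approximating the identity but by \emph{inverting} $g_j(\log\Delta)$ on spectral pieces. One splits $E_j=E_-\oplus E_c\oplus E_+$; on $E_\pm$ the function $g_j$ reduces to an explicit rational expression in $\Delta^{\pm 1}$, and $(g_j(\log\Delta)|E_\pm)^{-1}$ is shown to preserve $M\om$ by expanding $(e^{2j}\Delta^2-1)^{-1}$ (resp.\ its counterpart) as a geometric series, which converges precisely because of the spectral localization (Lemma \ref{inverse}); on $E_c$, $g_j(\log\Delta)$ is just a nonzero scalar (Lemma \ref{calculation2}). One then approximates $E_\pm(x\om)$ by vectors $a_n\om\in M\om\cap E_\pm$ (Lemma \ref{dense}), pulls these back through the inverse to uniformly totally bounded elements of $M$ (Corollary \ref{tot}), pushes forward through $g_j(\log\Delta)$ to land in $M_0$ with a uniform total bound (Lemma \ref{landsin}), and only then invokes completeness. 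If you want to repair your argument, you need to replace the hypothesized approximate identity $\Phi$ with this inversion-on-spectral-pieces mechanism (or supply an honest construction of $\Phi$ with both the compact spectral support and the operator-norm control, which I do not believe is available from the given function families).
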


We begin with some lemmas.  Until further notice, $M_0$ denotes a good subalgebra of $M$.

\begin{lem}\label{landsin}
    For all $x \in M$ and $a\in \mathbb N$, we have that $g_{a}(\log(\Delta))x \in M_0$ and $\|g_a(\log \Delta)(x)\|_{\rt}\leq 3e^{3a/2}\|x\|$.
\end{lem}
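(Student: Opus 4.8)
The claim has two parts: first, that $g_a(\log(\Delta))x$ lies in the (unique, once we reinterpret $g_a(\log(\Delta))x\omega = z\omega$) subalgebra $M_0$; and second, the right-norm bound $\|g_a(\log\Delta)(x)\|_{\rt}\le 3e^{3a/2}\|x\|$. For the membership part, the key observation is that $g_a$ vanishes rapidly enough that $g_a(\log(\Delta))$ pushes the whole of $M\omega$ into a spectral subspace on which we have already shown everything lives in $M_0$. First I would recall the explicit form $g_a(t) = e^{-|t|} - \frac{e^{-|t-a|}+e^{-|t+a|}}{e^a+e^{-a}}$ and compute its behaviour as $|t|\to\infty$: for $t\ge a$ one gets $e^{-t}\bigl(1 - \frac{e^a + e^{-a}}{e^a+e^{-a}}\bigr) = 0$, and similarly for $t\le -a$, so in fact $g_a$ is \emph{supported on} $[-a,a]$. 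Hence for any $x\in M$ with $x\omega\in M\omega$ (i.e. any $x\in M$), the vector $g_a(\log(\Delta))x\omega$ lies in the spectral subspace $E(e^{-a},e^{a})(H_\varphi)$ of $\Delta$, that is, in $E_j$ for $j = e^a$ (more precisely for any $j>e^a$; a harmless closed-vs-open interval issue handled by taking $j$ slightly larger, or by noting $g_a$ vanishes at $\pm a$ too). Writing $g_a(\log(\Delta))x\omega = z\omega$ with $z\in M$ (which exists by the previous proposition applied with the good subalgebra replaced by $M$ itself, or simply by Kadison's Lemma 4.11 machinery since $g_a$ is a bounded Borel function vanishing at $\pm\infty$ — in fact $\|z\|\le\|g_a\|_\infty\|x\|\le\|x\|$), Theorem~\ref{Dcore} (which we are in the process of proving, but whose statement we may invoke as the target — in the actual paper this lemma precedes \ref{Dcore}, so one must instead argue directly) gives $z\in M_0$.

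Here is the subtlety of ordering: Lemma~\ref{landsin} is stated \emph{before} Theorem~\ref{Dcore}, so I cannot cite \ref{Dcore}. Instead I would argue the membership directly from the hypotheses on the good subalgebra: since $g_a$ is supported on $[-a,a]$, it can be written as a norm-convergent series in the functions $h_b(\log(\Delta))$ and $f_b(\log(\Delta))$ for $b\in\mathbb N$ — indeed $g_a$ is, up to the normalizing constant, exactly the combination $f_0 - \frac{1}{e^a+e^{-a}}(f_a + f_{-a})$ appearing in the proof of the preceding proposition, and that proposition already tells us $g_a(\log(\Delta))(x\omega)=z\omega$ for some $z\in M_0$ whenever $x\in M_0$. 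So for $x\in M_0$ there is nothing to do. For general $x\in M$, I would use strong density of $M_0$: pick $x_n\in M_0$ with $x_n\to x$ strongly and $\sup_n\|x_n\|\le\|x\|$ (possible since one can truncate, e.g. multiply by spectral projections, staying in $M_0$ by closure under the $h$'s — or more cheaply, note $\|x_n\|\le\|x\|+1$ suffices); then $g_a(\log(\Delta))x_n\omega = z_n\omega$ with $z_n\in M_0$, and I must show the $z_n$ converge $\|\cdot\|_\varphi^\#$-to some $z\in M_0$ with $z\omega = g_a(\log(\Delta))x\omega$. Convergence of $z_n\omega$ is immediate from boundedness of $g_a(\log(\Delta))$ on $H_\varphi$; to land in $M_0$ I invoke completeness of $S_K(M_0)$ (hypothesis (2), rescaled), which requires a uniform total-boundedness estimate on the $z_n$ — and this is exactly what the norm bound in the lemma supplies.

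So the heart of the matter is the estimate $\|g_a(\log(\Delta))x\|_{\rt}\le 3e^{3a/2}\|x\|$, and this is where Corollary~\ref{tot} enters: since $z\omega = g_a(\log(\Delta))x\omega\in E_j$ with $j=e^a$ and $\|z\|\le\|g_a\|_\infty\|x\|\le\|x\|$, Corollary~\ref{tot} yields that $z$ is totally $j^{3/2}\|z\|$-bounded, i.e. $\|z\|_{\rt}\le e^{3a/2}\|x\|$. That is cleaner than the stated bound; the factor $3$ in the lemma presumably comes from being slightly wasteful (e.g. estimating $\|g_a\|_\infty\le 3$ crudely rather than $\le 1$, or from an open-interval enlargement $j = e^a$ replaced by something a bit larger, or from bounding $\|g_a\|_\infty$ by summing the three terms $1+1+1$). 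In any case, with the crude bound $\|g_a\|_\infty\le 3$ one gets precisely $\|z\|_{\rt}\le 3e^{3a/2}\|x\|$, matching the statement.

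\textbf{Main obstacle.} The genuinely delicate point is not the spectral estimate but the reduction from $x\in M_0$ to general $x\in M$ while keeping the approximants inside $S_K(M_0)$ with \emph{uniform} $K$: one needs the approximating sequence $x_n\in M_0$ to be operator-norm bounded (so that the Corollary~\ref{tot}-type bound on $z_n$ is uniform), and then to verify that $(z_n)$ is $\|\cdot\|_\varphi^\#$-Cauchy — not merely $\|\cdot\|_\varphi$-Cauchy — so that hypothesis (2) applies. The $\#$-Cauchyness follows because $g_a(\log(\Delta))$ commutes with the modular conjugation-type symmetry $t\mapsto -t$ in a way that controls both $z_n\omega$ and $z_n^*\omega$ simultaneously (as in the reasoning of Corollary~\ref{bridgetotally}), but spelling this out carefully — i.e. showing $g_a(\log(\Delta))(x_n^*\omega)$ also converges and equals $z^*\omega$ in the limit — is the one place where the left/right bookkeeping has to be done honestly rather than waved through.
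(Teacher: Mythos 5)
Your proposal is correct and follows essentially the same route as the paper: approximate $x$ by operator-norm-bounded elements of $M_0$ (the paper uses Kaplansky density), observe that $g_a$ vanishes off $[-a,a]$ so the images land in $E_j$ with $j=e^a$ and are uniformly totally $3j^{3/2}$-bounded by Corollary~\ref{tot}, and conclude via completeness of $S_K(M_0)$ together with the coincidence of the strong and strong-$*$ topologies on totally bounded sets. The $\#$-Cauchyness issue you flag as the main obstacle is exactly what Lemma~\ref{hashtagfacts} dispatches, so no extra left/right bookkeeping is needed.
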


\begin{proof}
    Without loss of generality, we may assume that $\|x\| = 1$. By Kaplansky density, we can find a sequence $x_n$ of contractions in $M_0$ strongly converging to $x$. Then $y_n := g_{a}(\log(\Delta))x_n$ strongly converges to $g_{a}(\log(\Delta))x$. Moreover, $\|y_n\| \leq 3$ for all $n$. Since $y_n \in E_j$, where $j = e^a$, it follows that $y_n$ is totally $3j^{3/2}$-bounded. The claim now follows by completeness of totally bounded subsets of $M_0$ and the fact that the strong and strong-* topologies are the same on totally bounded subsets.
\end{proof}

\begin{lem}
    If $x \in M$ is such that $x\om \in E_j$, then $\Delta^n g_j(\log(\Delta)) x\om \in M_0\om$ for all integers $n$.
\end{lem}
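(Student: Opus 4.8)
The plan is to first replace the target statement by a cleaner one. By Lemma \ref{landsin} there is $z \in M_0$ with $z\omega = g_j(\log(\Delta))x\omega$ (we take $j\in\mathbb N$, which is all the sequel requires), so that $\Delta^n g_j(\log(\Delta))x\omega = \Delta^n z\omega$; moreover, since $x\omega\in E_j$ and $g_j(\log(\Delta))$ commutes with the spectral projection $E(j^{-1},j)$ of $\Delta$, we still have $z\omega\in E_j$. Thus it suffices to prove: if $z\in M_0$ and $z\omega\in E_j$, then $\Delta^n z\omega\in M_0\omega$ for all $n\in\mathbb Z$. The gain is that $\Delta$ restricted to $E_j$ is bounded and invertible, with $\log(\Delta)$ having spectrum inside the compact interval $[-\log j,\log j]$, so that $\Delta^n z\omega$ is unambiguous and, by Fact \ref{kadison412}, represented by an element of $M$; the issue is only to see that this element lies in $M_0$.

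The key point is that on $[-\log j,\log j]$ Kadison's functions $f_{\pm a}$ become \emph{exact} exponentials once $a$ is large. Fix $a\in\mathbb N$ with $a>\log j$; then $f_a(t)=e^{t-a}$ and $f_{-a}(t)=e^{-t-a}$ for every $t\in[-\log j,\log j]$. Using $z\omega\in E_j$, one computes $f_a(\log(\Delta))^{n}z\omega = e^{-na}\,\Delta^{n}z\omega$ for $n\ge 0$, and $f_{-a}(\log(\Delta))^{-n}z\omega = e^{na}\,\Delta^{n}z\omega$ for $n\le 0$. So the whole lemma reduces to knowing that $f_a(\log(\Delta))$ and $f_{-a}(\log(\Delta))$ send $M_0$ into $M_0$; since $M_0$-preserving operators compose, this yields $f_a(\log(\Delta))^{n}z,\ f_{-a}(\log(\Delta))^{-n}z\in M_0$ and hence $\Delta^n z\omega\in M_0\omega$. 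Closure under $f_a(\log(\Delta))$ was already established (the proposition above showing that a good $M_0$ is closed under $f_a(\log(\Delta))$ and $g_a(\log(\Delta))$ for $a\in\mathbb N$). For $f_{-a}(\log(\Delta))$ I would invoke the elementary identity
\[
f_{-a}(t) = (e^{a}+e^{-a})\bigl(f_0(t)-g_a(t)\bigr) - f_a(t),
\]
immediate from the definition of $g_a$, which writes $f_{-a}(\log(\Delta))$ as a linear combination of the $M_0$-preserving operators $f_0(\log(\Delta))$, $g_a(\log(\Delta))$ and $f_a(\log(\Delta))$, and is hence $M_0$-preserving as well.

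The one non-routine ingredient is this last identity: closure under the $g_a$'s secretly delivers closure under the reversed functions $f_{-a}$, which is exactly what is needed to reach negative powers of $\Delta$. Without it one would be pushed toward approximating $e^{-t}$ on $[-\log j,\log j]$ by polynomials in the available operators $h_a(\log(\Delta))$, where the delicate point becomes controlling the total-boundedness constants of the approximants along the lines of the proof of the $f_a$/$g_a$ proposition; the identity avoids this altogether, turning the lemma into an exact spectral computation on $E_j$. I expect the most error-prone step to be that bookkeeping — checking that $f_a(\log(\Delta))^n$ really implements $e^{-na}\Delta^n$ on $E_j$ via the spectral projection $E(j^{-1},j)$ — but it is routine.
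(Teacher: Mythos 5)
Your argument is correct, but it takes a genuinely different route from the paper's, which is a two-line affair: since $\Delta^n$ and $g_j(\log(\Delta))$ commute, the paper writes $\Delta^n g_j(\log(\Delta))x\om = g_j(\log(\Delta))(\Delta^n x\om)$, notes that $\Delta^n x\om \in M\om$ by Fact \ref{kadison412}, and then invokes Lemma \ref{landsin} --- whose strength is precisely that $g_j(\log(\Delta))$ maps \emph{all} of $M$ (not merely $M_0$) into $M_0$. You instead use Lemma \ref{landsin} only once, to land $g_j(\log(\Delta))x\om$ in $M_0\om\cap E_j$, and then show directly that $\Delta^{\pm 1}$ preserves $M_0\om\cap E_j$ by identifying $\Delta|_{E_j}$ with $e^{a}f_{a}(\log(\Delta))|_{E_j}$ and $\Delta^{-1}|_{E_j}$ with $e^{a}f_{-a}(\log(\Delta))|_{E_j}$ for $a\in\mathbb N$ with $a>\log j$, obtaining closure under $f_{-a}(\log(\Delta))$ from the identity $f_{-a}=(e^a+e^{-a})(f_0-g_a)-f_a$; since $f_a(\log(\Delta))$ and $g_a(\log(\Delta))$ commute with $E(j^{-1},j)$, the iterates stay in $E_j$ and the spectral computation persists, so the argument closes. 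The $f_{-a}$ identity is a genuinely useful observation (it makes explicit that closure under $g_a$, together with closure under $f_0$ and $f_a$, yields closure under the reversed function $f_{-a}$, a point the paper's surrounding discussion leaves implicit), but the price is a longer proof that must iterate $M_0$-preservation and track membership in $E_j$ at each step, whereas the paper's commutation trick reduces everything to a single application of the absorption property of $g_j(\log(\Delta))$ to the element $x_n\in M$ representing $\Delta^n x\om$. Both proofs are valid.
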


\begin{proof}
    By assumption and Fact \ref{kadison412}, $\Delta^n x\om \in M\om$.  Lemma \ref{landsin} thus yields that $\Delta^n g_j(\log(\Delta))(x\om) = g_j(\log(\Delta))(\Delta^n x\om) \in M_0\om$.
\end{proof}

For a fixed $j$, we split $E_j$ into pieces $E_{-}$, $E_{c}$ and $E_{+}$ corresponding to the intervals $(\frac{1}{j}, 1)$, $\{1\}$ and $(1, j)$ respectively. Define $k_{+}(t) := g_a(2t-a)$ and $k_{-} := g_a(2t+a)$, where $a := \log(j)$.


\begin{lem}\label{dense}
    $E_{+}(H) \cap M\om$ is dense in $E_{+}(H)$ and $E_{-}(H) \cap M\om$ is dense in $E_{-}(H)$.
\end{lem}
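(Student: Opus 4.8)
The plan is to prove density for $E_+(H)$; the case of $E_-(H)$ is entirely symmetric (replace $\Delta$ by $\Delta^{-1}$, equivalently swap $M$ and $M'$, using the $M\leftrightarrow M'$ symmetry recorded throughout Section \ref{SectionRvD}). First I would observe that $E_+(H)\cap M\om$ is a subspace of the Hilbert space $E_+(H)$, so by the standard Hilbert-space criterion it suffices to show that its orthogonal complement inside $E_+(H)$ is trivial: if $\xi\in E_+(H)$ satisfies $\langle \xi, x\om\rangle = 0$ for every $x\in M$ with $x\om\in E_+(H)$, then $\xi = 0$. The key point is that multiplication by the spectral projection $E(1,j)$ of $\Delta$ corresponding to $(1,j)$ does not, a priori, send $M\om$ into $M\om$ — that is exactly why the statement needs an argument — so we must manufacture enough honest elements of $M$ whose GNS vectors land in $E_+(H)$ and span a dense set there.

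The mechanism for manufacturing such elements is the function $k_+(t) := g_a(2t - a)$ with $a = \log j$, introduced just before the lemma. The point of the substitution $t\mapsto 2t-a$ is that $g_a$, as defined in Section \ref{tb}, has its support concentrated near the interval $[-a,a]$ and vanishes (or is appropriately small) outside it, so $k_+$ is a bump-type function supported essentially where $t$ lies in $(0,a)$, i.e.\ where $\Delta$ has spectrum in $(1,j)$; in particular $k_+(\log\Delta)$ factors through $E(1,j)$. Now for any $x\in M$, Lemma \ref{landsin} (applied after the affine reparametrization, which only rescales the constants) shows that $g_a(\log\Delta)x\om$, hence after the change of variables $k_+(\log\Delta)x\om$, equals $z\om$ for some genuine $z\in M$ (indeed $z\in M_0$, with an explicit right-bound), and moreover $z\om\in E_+(H)$ because of the support property of $k_+$. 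So the set $\{k_+(\log\Delta)x\om : x\in M\}$ is contained in $E_+(H)\cap M\om$. It therefore remains to show this set is dense in $E_+(H)$.

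For that last step I would argue as follows. Suppose $\xi\in E_+(H)$ is orthogonal to $k_+(\log\Delta)x\om$ for all $x\in M$. Since $k_+(\log\Delta)$ is a bounded self-adjoint operator commuting with the spectral calculus of $\Delta$, we get $0 = \langle \xi, k_+(\log\Delta)x\om\rangle = \langle k_+(\log\Delta)\xi, x\om\rangle$ for all $x\in M$; as $M\om$ is dense in $H$, this forces $k_+(\log\Delta)\xi = 0$. But $\xi\in E_+(H)$ means $\xi$ is supported (in the spectral sense) on $(1,j)$, equivalently $\log\Delta$ restricted to $\xi$ has spectrum in $(0,a)$, and on that open interval $k_+$ — being $g_a$ composed with the affine map carrying $(0,a)$ into the interior of the interval where one checks $g_a \neq 0$ — is nowhere zero. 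Hence multiplication by $k_+(\log\Delta)$ is injective on $E_+(H)$, so $\xi = 0$, as desired. The main obstacle, and the step deserving the most care, is precisely the verification that $g_a$ does not vanish on the relevant open interval and that $k_+(\log\Delta)$ genuinely factors through $E(1,j)$ — i.e.\ pinning down the support/positivity properties of $g_a$ from its closed-form definition $g_a(t) = e^{-|t|} - \frac{e^{-|t-a|} + e^{-|t+a|}}{e^a + e^{-a}}$; once those analytic facts about $g_a$ are in hand, everything else is the routine Hilbert-space orthogonality argument above together with a bookkeeping application of Lemma \ref{landsin} (and Corollary \ref{tot}) to stay inside $M_0$.
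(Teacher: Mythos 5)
Your proposal is correct and follows essentially the same route as the paper: both arguments hinge on the facts that $k_+(\log\Delta)$ maps $M\om$ into $M\om$, that $k_+$ is supported in $(0,a)$ so the image lands in $E_+(H)$, and that $k_+$ is nonvanishing there so $k_+(\log\Delta)$ has dense range in $E_+(H)$. Your orthogonal-complement formulation of the last step is just the dual phrasing of the paper's ``does not vanish, hence dense range'' argument, and your verification of the support and positivity of $g_a$ is exactly the analytic content the paper leaves implicit.
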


\begin{proof}
    It is clear from construction that $k_{+}(\log(\Delta))$ maps $M\om$ to $M\om$. Note that $k_{+}(t)$ is strictly positive on $(0, a)$ and $0$ everywhere else. Thus $k_{+}(\log(\Delta))x$ is contained in the spectral subspace of $\Delta$ corresponding to $(1, j)$ for all $x$. Since $k_{+}$ does not vanish on $(0, a)$, we have $k_+(\log \Delta)(H)$ is dense in $E_+(H)$.  Moreover, since $\{E_+(x\om) \ : \ x\in M\}$ is dense in $E_+(H)$, we have that $\{k_+(\log(\Delta))(E_+(x\om)) \ : \ x\in M\}$ is dense in $E_+(H)$.
    The proof for $E_{-}(H)$ is analogous.
\end{proof}

The following is a straightforward calculation:
\begin{lem}\label{calculation2}
If $v\in E_c(H)$, then 
\[
g_j(\log \Delta)(v)=\frac{e^j-e^{-j}}{e^j + e^{-j}}v.
\]
\end{lem}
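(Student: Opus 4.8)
The plan is to observe that $E_c(H)$ is, by definition, the spectral subspace of $\Delta$ corresponding to the eigenvalue $1$, so that $\Delta$ restricts to the identity operator on $E_c(H)$. Consequently $\log(\Delta)$ restricts to the zero operator on $E_c(H)$, and by the functional calculus $g_j(\log(\Delta))$ acts on $E_c(H)$ as multiplication by the scalar $g_j(0)$.

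It then remains to evaluate $g_j(0)$ using the defining formula $g_a(t) = e^{-|t|} - \dfrac{e^{-|t-a|} + e^{-|t+a|}}{e^a + e^{-a}}$ at $t = 0$ with $a = j > 1$. First I would note $|{-j}| = |j| = j$, so
\[
g_j(0) = 1 - \frac{e^{-j} + e^{-j}}{e^j + e^{-j}} = 1 - \frac{2e^{-j}}{e^j + e^{-j}} = \frac{e^j + e^{-j} - 2e^{-j}}{e^j + e^{-j}} = \frac{e^j - e^{-j}}{e^j + e^{-j}},
\]
which is exactly the claimed scalar. Combining this with the previous paragraph gives $g_j(\log(\Delta))(v) = \frac{e^j - e^{-j}}{e^j + e^{-j}} v$ for all $v \in E_c(H)$, as desired.

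There is essentially no obstacle here: the only point requiring a word of justification is why $\Delta$ acts as the identity on $E_c(H)$, which is immediate from the definition of $E_c$ as the spectral subspace over the singleton $\{1\}$, together with the spectral theorem. The rest is the arithmetic displayed above.
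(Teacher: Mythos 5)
Your proof is correct and is exactly the ``straightforward calculation'' the paper has in mind: since $E_c(H)$ is the eigenspace of $\Delta$ for the eigenvalue $1$, the functional calculus reduces the claim to evaluating $g_j(0)$, which you compute correctly as $\frac{e^j-e^{-j}}{e^j+e^{-j}}$. No issues.
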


In connection with the next lemma, we note that $g_j(\log \Delta)$ is bounded and invertible (with bounded inverse) on $E_j$.

\begin{lem}\label{inverse}
$(g_j(\log \Delta)|E_-)^{-1}(M\om\cap E_-),(g_j(\log \Delta)|E_+)^{-1}(M\om\cap E_+)\subseteq M\om$.
\end{lem}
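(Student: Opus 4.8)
The plan is to exploit the fact that, on the spectral subspace $E_-$, the operator $g_j(\log\Delta)$ is no longer a transcendental function of $\Delta$ but collapses to an honest Laurent polynomial in $\Delta$, whose inverse can then be expanded as an operator-norm-convergent series to which Fact \ref{kadison412}(1) applies. By symmetry (interchanging $\Delta$ and $\Delta^{-1}$ everywhere and using that $g_j$ is even) it suffices to treat $E_-$. On $E_-$ the spectrum of $\log\Delta$ is contained in $[-\log j,\,0]$, and since $\log j<j$ a direct computation shows $g_j(t)=(e^{j}e^{t}-e^{-j}e^{-t})/(e^{j}+e^{-j})$ for $t\in[-\log j,\,0]$; hence, by functional calculus,
\[
g_j(\log\Delta)\big|_{E_-}=\tfrac{1}{e^{j}+e^{-j}}\bigl(e^{j}\Delta-e^{-j}\Delta^{-1}\bigr)\big|_{E_-}.
\]
In particular this operator is invertible on $E_-$ (the $e^{-j}\Delta^{-1}$ term keeps it bounded away from $0$), recovering the observation recorded just before the lemma.

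Next I would fix $w=b\om\in M\om\cap E_-$ with $b\in M$, set $v:=(g_j(\log\Delta)|_{E_-})^{-1}(w)\in E_-$, and reduce the equation $g_j(\log\Delta)v=w$ to a Neumann series. Substituting $u:=\Delta v\in E_-$ (legitimate, since $\Delta|_{E_-}$ is bounded with bounded inverse) and multiplying through by $\Delta$, the displayed formula becomes
\[
\bigl(e^{j}I-e^{-j}\Delta^{-2}\bigr)u=(e^{j}+e^{-j})\,w .
\]
Since $\|\Delta^{-1}|_{E_-}\|\le j$ we have $\|e^{-2j}\Delta^{-2}|_{E_-}\|\le (je^{-j})^{2}<1$, so
\[
u=e^{-j}\sum_{n=0}^{\infty}e^{-2jn}\,\Delta^{-2n}\bigl((e^{j}+e^{-j})w\bigr),
\]
the series converging in the norm of $E_-$.

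The final step, and the one requiring the most care, is to see that this vector actually lies in $M\om$ and not merely in $H$. Here I would invoke Fact \ref{kadison412}(1): applied to $w'=(e^{j}+e^{-j})b\om\in E_j$ it produces, for each $n$, an element $b_n\in M$ with $\Delta^{-2n}w'=b_n\om$ and $\|b_n\|\le j^{2n}(e^{j}+e^{-j})\|b\|$. Since $je^{-j}<1$, the series $\sum_{n}e^{-2jn}b_n$ therefore converges absolutely in the operator norm of $M$, say to $c\in M$, whence $u=e^{-j}c\om\in M\om\cap E_-$. A last application of Fact \ref{kadison412}(1) (with $n=-1$) to $u$ gives $v=\Delta^{-1}u\in M\om$, as desired; the argument for $E_+$ is identical with $\Delta$ and $\Delta^{-1}$ interchanged.

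The reason the detour through the Laurent-polynomial form is needed — and where a naive approach fails — is exactly the passage to $M\om$ in the previous paragraph. One cannot build $(g_j(\log\Delta))^{-1}$ by iterating $g_j(\log\Delta)$ on vectors of $M\om$, because $g_j(\log\Delta)$ enlarges the operator norm of the algebra element representing a vector (roughly by a factor $3$), so the natural partial sums have unbounded operator norms and their limit need not lie in $M\om$. Restricting to $E_-$ converts the inversion into a series in $\Delta^{-2}$ governed by the sharp estimate $\|\Delta^{\pm1}|_{E_j}\|\le j$ of Fact \ref{kadison412}, which forces the accompanying series of algebra elements to converge in operator norm and so keeps the limit inside $M\om$.
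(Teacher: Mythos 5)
Your proof is correct and follows essentially the same route as the paper: on $E_\pm$ the function $g_j$ collapses to the Laurent polynomial $\frac{e^{j}\Delta^{\pm1}-e^{-j}\Delta^{\mp1}}{e^{j}+e^{-j}}$, one factors out a single power of $\Delta$, and the remaining inverse is a geometric series in $e^{-2j}\Delta^{\mp2}$ whose terms stay in $M\om$ with operator norms controlled by Fact \ref{kadison412}(1), forcing operator-norm convergence of the representing elements. Your write-up merely makes explicit the norm-convergence step that the paper leaves implicit in the phrase ``writing $(e^{2j}\Delta^2-1)^{-1}$ as a geometric series.''
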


\begin{proof}
    
    Suppose first that $x\om \in E_{-}(H)$.  Then 
    \begin{equation*}
    \begin{split}
    g_j(\log(\Delta)) (x\om) &= \left[\Delta - \frac{e^{-j}\Delta + e^{-j}\Delta^{-1}}{e^j + e^{-j}}\right](x\om)\\
    &= \frac{\Delta(e^j + e^{-j}) - (e^{-j}\Delta + e^{-j}\Delta^{-1})}{e^j + e^{-j}} (x\om)\\
    &= \frac{\Delta e^j - e^{-j}\Delta^{-1}}{e^j + e^{-j}}(x\om)
    \end{split}
    \end{equation*}
    It thus suffices to show that $(e^{j}\Delta-e^{-j}\Delta^{-1})^{-1}(x\om)\in M\om$.  Since 
    \[
    (e^{j}\Delta - e^{-j}\Delta^{-1})^{-1} = \Delta(e^{j}\Delta^2 - e^{-j})^{-1}
    \]
    and $M\om \cap E_j$ is closed under $\Delta$, it suffices to show that $(e^{2j}\Delta^2 - 1)^{-1}(x\om)\in M\om$; however, this follows by writing $(e^{2j}\Delta^2 - 1)^{-1}$ as a geometric series and using that $x\om$ is in the $(\frac{1}{j}, 1)$ spectral subspace of $\Delta$.

    Next suppose that $x\om \in E_{+}(H)$.  Then 
    \begin{equation*}
    \begin{split}
    g_j(\log(\Delta))(x\om) & = \left[\Delta^{-1} - \frac{e^{-j}\Delta + e^{-j}\Delta^{-1}}{e^j + e^{-j}}\right](x\om) \\
    & = \frac{\Delta^{-1}(e^j + e^{-j}) - (e^{-j}\Delta + e^{-j}\Delta^{-1})}{e^j + e^{-j}}(x\om) \\
    & = \frac{\Delta^{-1} e^{j} - e^{-j}\Delta}{e^j + e^{-j}}(x\om).
    \end{split}
    \end{equation*}
    It thus suffices to show that $(e^{-j}\Delta-e^{j}\Delta^{-1})^{-1}(x\om)\in M\om$.  This time, write $(e^{-j}\Delta - e^{j}\Delta^{-1})^{-1} = \Delta^{-1}(e^{-j}-e^{j}\Delta^{-2})^{-1}$ and argue as in the previous case.
\end{proof}

\begin{proof}[Proof of Theorem \ref{Dcore}]
Without loss of generality, we may assume that $j\in \mathbb N$ and $\|x\|\leq 1$.  Write $x\om=v_-+v_c+v_+$, where $v_-\in E_-(H)$, $v_c\in E_c(H)$, and $v_+\in E_+(H)$.  By Lemma \ref{dense}, we may write $v_-=\lim a_n\om$ with each $a_n\in M$ and $a_n\om\in E_-(H)$ and $v_+=\lim b_n\om$ with each $b_n\in M$ and $b_n\om\in E_-(H)$.  Moreover, we may assume that $\|a_n\|,\|b_n\|\leq 1$ for each $n$.  By Lemma \ref{inverse}, for each $n$, we may find $\widehat{a_n},\widehat{b_n}\in M$ such that $g_j(\log \Delta)^{-1}(a_n\om)=\widehat{a_n}\om$, $g_j(\log \Delta)^{-1}(b_n\om)=\widehat{b_n}\om$, and $\widehat{a_n}\om,\widehat{b_n}\om\in E_j$.  By Corollary \ref{tot}, we have that  $\widehat{a_n}$ and $\widehat{b_n}$ are totally $j^{3/2}M$-bounded, where $M:=\|(g_j(\log \Delta)|E_j)^{-1}\|$.  Consequently, we have
$$x\om=g_j(\log \Delta)(\lim_n \widehat{a_n}\om)+v_c+g_j(\log \Delta)(\lim_n \widehat{b_n}\om).$$  By Lemma \ref{landsin} $g_j(\log\Delta)(\widehat{a_n}\om):=a_n^\dagger\om$ and $g_j(\log\Delta)(\widehat{b_n}\om)=b_n^\dagger \om$ for some $a_n^\dagger,b_n^\dagger\in M_0$; moreover, there is $K>0$ such that each $a_n^\dagger$ and $b_n^\dagger$ are totally $K$-bounded.  Therefore, by completeness of $S_K(M)$, the first and third terms of the display belong to $M_0\om$.  By Lemmas \ref{landsin} and \ref{calculation2}, the middle term also belongs to $M_0\om$, whence the corollary is proven.
\end{proof}

In light of Theorem \ref{Dcore}, in order to finish the proof of Theorem \ref{characterization}, one must approximate arbitrary totally bounded elements by ones that belong to compact spectral subspaces.  
We use the method of Bochner integrals found, for instance, in \cite[II, Lemma VI.2.4]{Takesaki}.

For $v\in H$ and $r > 0$, we set
\[
v_r = \sqrt{\frac{r}{\pi}}\int_{\mathbb{R}} e^{-rt^2}\Delta^{it}v\operatorname{dt}.
\]
For all $z \in \mathbb{C}$, $v_r$ belongs to the domain of $\Delta^{iz}$ and we have $\Delta^{iz}v_r=v_r(z)$, where $v_r(z)\in H$ is defined by
\[
v_r(z) = \sqrt{\frac{r}{\pi}}\int_{\mathbb{R}} e^{-r(t-z)^2}\Delta^{it}v\operatorname{dt}
\]
defines $\Delta^{iz}v_r$.  Moreover, the map $z\mapsto v_r(z):\mathbb{C}\to H$ is an entire function in the variable $z$.  Consequently, for all $r > 0$, we have $v_r\in E_j$ for some $j>1$ (see \cite[Proof of Theorem VI.2.2.]{Takesaki}).

The proof of the following is in \cite[Lemma 1.3]{Haa} with details drawing from the proof of \cite[Lemma 10.1]{Takesaki2}.

\begin{fact}
    Suppose that $x\in M$ and set $v:=x\om$.  Then for each $r>0$, there is $x_r\in M$ such that $v_r=x_r\om$.  Moreover, $\|x_r\|\leq \|x\|$. Furthermore, we have $v_r$ strong* converges to $v$ as $r \to 0$.
\end{fact}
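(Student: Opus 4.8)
The plan is to realize $v_r$ in the form $x_r\om$ for an element $x_r\in M$ obtained by averaging $x$ over the modular automorphism group against the Gaussian kernel. The starting point is Fact~\ref{515}: for each $t\in\mathbb{R}$ one has $\Delta^{it}(x\om)=\sigma_t^\vp(x)\om$ with $\|\sigma_t^\vp(x)\|=\|x\|$, so the integrand defining $v_r$ is the image under $\om$ of the uniformly bounded curve $t\mapsto\sigma_t^\vp(x)\in M$, which is $\sigma$-weakly continuous (for vector functionals this is $t\mapsto\langle\Delta^{it}x\Delta^{-it}\xi,\eta\rangle$, continuous by strong continuity of $(\Delta^{it})$, and such functionals span $M_*$ in norm). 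I would then \emph{define}
\[
x_r := \sqrt{\frac{r}{\pi}}\int_{\mathbb{R}} e^{-rt^2}\,\sigma_t^\vp(x)\,dt
\]
as a weak-$*$ integral in $M$: the assignment $\psi\mapsto\sqrt{r/\pi}\int_{\mathbb{R}} e^{-rt^2}\,\psi\bigl(\sigma_t^\vp(x)\bigr)\,dt$ is a bounded linear functional on the predual $M_*$ of norm at most $\|x\|\cdot\sqrt{r/\pi}\int_{\mathbb{R}} e^{-rt^2}\,dt=\|x\|$, hence is evaluation at a unique $x_r\in M$ with $\|x_r\|\le\|x\|$.

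The identity $x_r\om=v_r$ is then checked by pairing both sides with an arbitrary $\xi\in H_\vp$: since the functional $y\mapsto\langle y\om,\xi\rangle$ belongs to $M_*$, the defining property of $x_r$ together with Fact~\ref{515} gives
\[
\langle x_r\om,\xi\rangle=\sqrt{\frac{r}{\pi}}\int_{\mathbb{R}} e^{-rt^2}\langle\sigma_t^\vp(x)\om,\xi\rangle\,dt=\sqrt{\frac{r}{\pi}}\int_{\mathbb{R}} e^{-rt^2}\langle\Delta^{it}v,\xi\rangle\,dt=\langle v_r,\xi\rangle,
\]
and since $\xi$ was arbitrary we conclude $x_r\om=v_r$.

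For the convergence statement I would first establish ordinary ($\|\cdot\|_\vp$) convergence: from
\[
v_r-v=\sqrt{\frac{r}{\pi}}\int_{\mathbb{R}} e^{-rt^2}\bigl(\Delta^{it}v-v\bigr)\,dt,
\]
strong continuity of $(\Delta^{it})$ at $t=0$ together with the fact that $\sqrt{r/\pi}\,e^{-rt^2}\,dt$ is an approximate identity concentrating at $0$ as $r\to 0$ yields $\|v_r-v\|_{H_\vp}\to0$. To promote this to $\|\cdot\|_\vp^\#$-convergence, note that the involution on $M$ is weak-$*$ continuous and $\sigma_t^\vp(x)^*=\sigma_t^\vp(x^*)$, so $x_r^*=(x^*)_r$ and hence $x_r^*\om=(x^*\om)_r$; the same Gaussian-averaging estimate applied to $x^*$ gives $\|x_r^*\om-x^*\om\|_{H_\vp}\to0$, and the two bounds combine to $\|x_r-x\|_\vp^\#\to0$.

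The step I expect to require the most care --- though it is not deep --- is the bookkeeping about topologies in the definition of $x_r$: because $t\mapsto\sigma_t^\vp(x)$ is not operator-norm continuous, the integral cannot be formed as a Banach-space-valued Bochner integral in the operator norm, so it must be taken against the predual, and one must verify both that the resulting functional on $M_*$ is bounded (hence represented by an element of the weak-$*$-closed algebra $M$) and that pulling $\om$, the involution, and vector functionals through the integral are all legitimate in this weaker topology. Once this is set up, the remainder is the standard approximate-identity argument.
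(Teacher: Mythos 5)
Your proof is correct, and it is essentially the argument the paper is implicitly invoking: the paper offers no proof of this fact, deferring instead to \cite[Lemma 1.3]{Haa} and \cite[Lemma 10.1]{Takesaki2}, and those references proceed exactly as you do, realizing $x_r$ as the weak-$*$ (predual) Gaussian average of $\sigma_t^\vp(x)$, using $\Delta^{it}(x\om)=\sigma_t^\vp(x)\om$ and $\|\sigma_t^\vp(x)\|=\|x\|$ to get the norm bound, and the approximate-identity argument applied to both $x\om$ and $x^*\om$ for strong$^*$ convergence. Your care about forming the integral against $M_*$ rather than as an operator-norm Bochner integral is exactly the right point to worry about, and your treatment of it is sound.
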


\begin{proof}[Proof of Theorem \ref{characterization}]
Suppose that $x\in M_{\tb}$ is a totally $K$-bounded element and set $v=x\om$.  By Theorem \ref{Dcore}, each $v_r\in M_0$.  Moreover, since all of the expressions above are symmetric under interchanging $M$ and $M'$, we have that each $x_r$ is totally $K$-bounded.  Since $S_K(M_0)$ is $\|\cdot\|_\varphi^\#$-complete, we have that $x\in M_0$.   
\end{proof} 

\section{Axiomatizating \wstar-probability spaces}\label{SectionLanguage}  

We are now ready to introduce our language $\cL_{W^*}$ for \wstar-probability spaces:
\begin{enumerate}
    \item For each $n \in \mathbb{N}$, there is a sort $S_n$ with bound $2n$, whose intended interpretation is the set $S_n(M)$. We let $d_n$ denote the metric symbol on $S_n$, whose intended interpretation is the metric induced by $\|\cdot\|_\vp^\#$.
    \item For each $n\in \mathbb N$, there are binary function symbols $+_n$ and $-_n$ with domain $S_{n}^2$ and range $S_{2n}$ and whose modulus of uniform continuity is $\delta(\epsilon)=\epsilon$.  The intended interpretation of these symbols are addition and subtraction in the algebra restricted to the sort $S_n$. 
    \item For each $n\in \mathbb N$, there is a binary function symbol $\cdot_n$ with domain $S_n^2$ and range $S_{n^2}$ and whose modulus of uniform continuity is $\delta(\epsilon)=\frac{\epsilon}{n}$.  The intended interpretation of these symbols is multiplication in the algebra restricted to the sort $S_n$.
    \item For each $n\in \mathbb{N}$, there is a unary function symbol $*_n$ whose modulus of uniform continuity is $\delta(\epsilon)=\epsilon$.  The intended interpretation of these symbols is for the adjoint restricted to each sort.
    \item For each $n\in\mathbb{N}$, there are two constant symbols $0_n$ and $1_n$ which lie in the sort $S_n$. The intended interpretation of these symbols are the elements $0$ and $1$.
    \item For each $n\in \mathbb N$ and $\lambda \in \mathbb{C}$, there is a unary function symbol $\lambda_n$ whose domain is $S_n$ and range is $S_{mn}$, where $m = \lceil|\lambda|\rceil$, and with modulus of uniform continuity $\delta(\epsilon)=\frac{\epsilon}{|\lambda|}$. The intended interpretation of these symbols is scalar multiplication by $\lambda$ restricted to $S_n$.  (If one is interested in keeping the language countable and computable, one may restrict to scalars whose real and imaginary parts are rational.)
    \item For each $m,n\in \mathbb{N}$ with $m<n$, we have unary function symbols $\iota_{m,n}$ with domain $S_m$ and range $S_n$ and whose modulus of uniform continuity is $\delta(\epsilon)=\epsilon$. The intended interpretation of these symbols is the inclusion map between the sorts. 
    \item For each $n\in \mathbb N$, we have a unary predicate symbol $\vp_{n}$, whose range is $[-n,n]$ and whose modulus of uniform continuity is $\delta(\epsilon)=\frac{\epsilon}{\sqrt{2}}$.  The intended interpretation of this symbol is the restriction of the state to $S_n$.  Technically speaking, there should really be two such symbols, one for the real and imaginary parts of the states, but we content ourselves to abuse notation here.
    \item For each $n\in \mathbb N$, we have function symbols $\vP_n:S_n\to S_{3n}$, $\vQ_n:S_n\to S_{3n}$, and $\vR_n:S_n\to S_{6n}$.  The intended interpretations of these symbols is as in the Rieffel-Van Daele theory from Section \ref{SectionRvD} (see Corollary \ref{PpreservesRb}).
\end{enumerate}

We associate to each \wstar-probability space $(M,\vp)$ in $W^*$-Prob an $\cL_{W^*}$-structure $\cal D(M,\vp)$, called its \textbf{dissection}, by interpreting the symbols above with their intended meanings as outlined when introducing the language.  

The goal now is to axiomatize the class of such dissections.  We consider the following $\cal L_{W^*}$-theory $T_{W^*}$: 
\begin{enumerate}

    \item Axioms for a $^*$-algebra. 
    \item Axioms saying that $\vp$ is a positive linear functional.
    \item Axioms saying the connecting maps preserve addition, multiplication, adjoints and $\vp$.     \item Axioms saying that the metric is derived from $\|\cdot\|_\vp^\#$.
    \item Axioms requiring that the elements of $S_n$ are totally $n$-bounded. More precisely, for each $n,k\in \mathbb N$, we have the axioms
    \[\sup_{x \in S_n}\left(\sup_{y \in S_k}\max \{ \vp((xy)^*xy)\dminus n\vp(y^*y), \vp((yx)^*yx)\dminus n\vp(y^*y)\}\right)\]
    and similar axioms with $x$ replaced by $x^*$.
    \item Axioms expressing that the inclusion maps are isometries and have the correct ranges. Specifically, for each $n$, and for all $k$ we add:
    \[
    \sup_{x \in S_1}  \bigl|\|\iota_n(x)\|-\|x\|\bigr|
    \]
    and 
    \[
    \sup_{x \in S_1}\sup_{z\in S_k} \bigl|\vp((zx)^*(zx) -\vp((z \iota_n(x))^*((z \iota_n(x)))\bigr|.
    \]
   \item Axioms expressing that the inclusion $\iota_n: S_1\to S_n$ has range consisting of all $x\in S_n$ with $x$ totally $1$-bounded:
    \begin{equation*}
        \begin{split}
            \sup_{x \in S_n} \sup_{z \in S_k}\inf_{y \in S_1}&\max\bigl\{\|x - \iota_n(y)\| \dminus (\|x\| \dminus 1),\\
            &\|x - \iota_n(y)\| \dminus \bigl(\vp((zx)^*(zx)) \dminus 1\,\vp(z^*z)\bigr),\\
            &\|x - \iota_n(y)\| \dminus \bigl(\vp((zx^*)^*(zx^*)) \dminus 1\,\vp(z^*z)\bigr)\bigr\}.
        \end{split}
    \end{equation*}
    \item Axioms stating that $\vP_n$ is the orthogonal projection on to the set of self-adjoint elements.  This is taken care of by the following two axioms:
    $$\sup_{x\in S_n} d_{3n}(\vP_n(x),\vP_n(x)^*)$$ and 
    $$\sup_{x\in S_n}\sup_{y\in S_1} \vp((y+y^*)(x-\vP_n(x))).$$
    \item Axioms expressing the relation between $\vP_n$ and $\vQ_n$: $$\sup_{x\in S_n}d_{3n}(\vQ_n(x),i\vP_n(-ix)).$$
    \item Axiom expressing the relation between $\vP_n$, $\vQ_n$, and $\vR_n$: $$\sup_{x\in S_n}d_{6n}(\vR_n(x),\vP_n(x)+\vQ_n(x)).$$
    \item For each $a\in \mathbb N$, axioms that expresses that the model is closed under $h_a(\log\Delta)$.  By Lemma \ref{calculation}, this can be taken care of with the following axioms:
    \[\sup_{x \in S_K}\inf_{y \in S_K} d(2\vR_n(2-\vR_n)x, (e^{-a}(2-\vR_n)^2 + e^{a}\vR_n^2)y).
\]
\setcounter{axiomlist}{\value{enumi}}
\end{enumerate}  

Note that in some of the axioms we are being a bit careless.  For example, in the last axiom, instead of $x$ and $y$ we should be considering their images under suitable embeddings $i$ and likewise the metric $d$ should have a subscript corresponding to the appropriate sort.

We also point out that the theory $T_{W^*}$ is effective (that is to say, computably enumerable).

\begin{thm}\label{EquivalenceOfModels}
$T_{W^*}$ axiomatizes the class of dissections of \wstar-probability spaces.  Moreover, the class of models of $T_{W^*}$ is categorically equivalent to the category of \wstar-probability spaces.
\end{thm}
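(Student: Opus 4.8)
The plan is to prove both halves simultaneously: first that every dissection $\cal D(M,\vp)$ is a model of $T_{W^*}$, and then that every model $A \models T_{W^*}$ arises, up to isomorphism, as the dissection of a uniquely determined \wstar-probability space, with the assignments $\M \colon (M,\vp) \mapsto \cal D(M,\vp)$ and a reconstruction functor $\cal R \colon A \mapsto (M_A, \vp_A)$ being mutually inverse equivalences compatible with morphisms. The verification that $\cal D(M,\vp) \models T_{W^*}$ is mostly bookkeeping: Proposition~\ref{phidense} gives strong density of $M_{\tb}$, the uniform continuity bounds were checked in \S\ref{SectionPrelim} (Lemmas~\ref{statecont}, \ref{hashtagfacts}, and the completeness lemma for $S_K(M)$), the axioms about $\vP_n, \vQ_n, \vR_n$ are exactly the content of Facts~\ref{RvDfacts} together with Corollary~\ref{PpreservesRb}, and the closure axiom under $h_a(\log\Delta)$ is Proposition~\ref{hatb} rendered through the algebraic reformulation of Lemma~\ref{calculation}. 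The cut-off/truncation in axiom (5) via $\dminus$ and the definition of $\|\cdot\|_\vp^\#$ make all these conditions $\sup$-$\inf$ statements that hold on the nose.

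\textbf{Reconstruction.} Given $A \models T_{W^*}$, I would first form the directed union $A_0 := \varinjlim_n S_n^A$ along the (isometric, by axiom (6)) inclusion maps $\iota_{m,n}^A$; the $^*$-algebra axioms (1) and the connecting-map axioms (3) make $A_0$ a $^*$-algebra with a positive functional $\vp_A$ coming from the $\vp_n$'s, and axiom (4) equips $A_0$ with the norm $\|\cdot\|_\vp^\#$. Completing $A_0$ in the inner product $\ip{a}{b} = \vp_A(b^*a)$ gives a Hilbert space $H$ with a distinguished vector $\om = 1^A$; left multiplication by elements of $A_0$ gives bounded operators on $H$ (axiom (5) bounds the relevant operator norms), and one lets $M_A := (A_0)'' \subseteq B(H)$, a von Neumann algebra with faithful normal state $\vp_A := \ip{\cdot\,\om}{\om}$ (faithfulness because $\om$ is separating, which follows since axiom (5) also bounds \emph{right} multiplication, making $\om$ cyclic for $M_A'$). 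Thus $(M_A, \vp_A)$ is a \wstar-probability space, and axioms (6)--(7) identify $S_n^A$ inside $M_A$ as precisely $S_n(M_A) \cap A_0$-closure, with the right bound $n$. One then has natural transformations $A \to \cal D(\cal R(A))$ and $\cal R(\cal D(M,\vp)) \to (M,\vp)$; the latter is clearly an isomorphism since $\cal R$ just recovers $M$ as the strong closure of $M_{\tb}$ (Proposition~\ref{phidense}). Morphisms: by Lemma~\ref{expectation}, embeddings of \wstar-probability spaces are exactly the state-preserving $^*$-homomorphisms preserving distances to self-adjoint sets, and these are precisely the $\cal L_{W^*}$-embeddings because the sort $S_n$, the state, and the predicate $d_\vp(\cdot, M_{\sa})$ are all definable — here one invokes the remark after Lemma~\ref{expectation} that for $a \in S_K$ the nearest self-adjoint element lies in $S_{3K}$, which is what makes the distance-to-$M_{\sa}$ function quantifier-free definable using $\vP$.

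\textbf{The main obstacle.} The crux is showing $A \to \cal D(\cal R(A))$ is an \emph{isomorphism}, i.e.\ surjective on each sort: one must show that every totally $n$-bounded element of the completed algebra $M_A$ already lies in $A_0$ (equivalently in $S_n^A$), so that passing to the von Neumann algebra and ``dissecting'' again does not manufacture new totally bounded elements. This is exactly the content of Theorem~\ref{characterization}: taking $M_0 := A_0$ viewed inside $M_A$, hypothesis (1) holds by axiom (5), hypothesis (2) (completeness of $S_1(M_0)$ in $\|\cdot\|_\vp^\#$) holds because $A$ is a complete metric structure and $S_1^A$ is a sort, and hypothesis (3) (closure under $h_a(\log\Delta)$) is precisely axiom (11) via Lemma~\ref{calculation} — crucially using that $\Delta$, $R$, and hence $h_a(\log\Delta)$ are expressible through $\vR_n$, whose interpretation is pinned down by axioms (8)--(10) together with the Rieffel--Van Daele identities of Facts~\ref{RvDfacts}. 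Theorem~\ref{characterization} then yields $A_0 = (M_A)_{\tb}$, and comparing operator-norm and right-norm bounds (using the axioms that pin down the bound of each sort) gives $S_n^A = S_n(M_A)$, completing the proof that $\cal D$ and $\cal R$ are inverse equivalences. The one technical point to be careful about is that the reconstruction of $\Delta$ from $\vR_n$ requires knowing that $\vR_n$'s interpretation genuinely is $P+Q$ for the \emph{correct} real subspace $\overline{M_{\sa}\om}$; this is forced by axiom (8), which says $\vP_n(x)$ is self-adjoint and $\vp((y+y^*)(x - \vP_n(x))) = 0$ for all $y$, i.e.\ $x - \vP_n(x) \perp \overline{M_{\sa}\om}$ in the real inner product, characterizing $\vP_n$ as the orthogonal projection $P$ exactly.
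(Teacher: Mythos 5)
Your proposal is correct and follows essentially the same route as the paper: verify that dissections satisfy the axioms, reconstruct $(M_A,\vp_A)$ from a model $A$ via the direct limit of sorts, GNS completion, and strong closure, then invoke Theorem~\ref{characterization} (with hypotheses supplied by axioms (5), (7), and (8)--(11)) to show that dissecting $M_A$ produces no new totally bounded elements, and finally match morphisms via Lemma~\ref{expectation} and the definability of the projection onto self-adjoints. You correctly identify the same crux the paper does, and your verification of the hypotheses of Theorem~\ref{characterization} matches the paper's argument.
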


\begin{proof}
Based on the calculations in Section \ref{SectionRvD}, it is straightforward to see that, for any \wstar-probability space $(M,\vp)$, the dissection $\cal D(M,\vp)$ is a model of $T_{W^*}$.

Conversely, consider a model $A$ of the theory $T_{W^*}$.  We begin by forming the direct limit $M_0$ of the sorts $S_n(A)$ for $n \in \mathbb N$ via the embeddings $i$.  Using the interpretation of the function symbols on each sort, we see that $M_0$ is naturally a complex $*$-algebra.  Furthermore, using the predicate for the state on each sort, one can define an inner product $\langle x,y \rangle := \varphi(y^*x)$ on $M_0$.  We let $H$ denote the Hilbert space completion of $M_0$ with respect to this inner product.  Note that, for $a\in S_n(A)$, we have that $\varphi(a)=\langle a,1\rangle=\langle \pi(a)1,1\rangle$.  For each $a\in M_0$, the maps $b\mapsto ab:M_0\to M_0$ and $b\mapsto ba:M_0\to M_0$ extend to bounded linear operators $\pi(a):H\to H$ and $\pi'(a):H\to H$ respectively satisfying $\|\pi(a)\|,\|\pi'(a)\|\leq n$ if $a\in S_n(A)$. 

We let $M_A$ denote the strong closure of $\pi(M_0)$ inside of $\cal B(H)$ and we let $\varphi_A$ denote the faithful, normal state on $M$ given by $\varphi(x)=\langle x1,1\rangle$ for all $x\in M$; by the previous paragraph, $\varphi_A$ agrees with $\varphi$ on $M_0$ once $M_0$ is identified with its image via $\pi$.  

We claim that $\pi(M_0)\subseteq B(H)$ is a good subalgebra of $M_A$, whence, by Theorem \ref{characterization}, $\pi(M_0)=M_{\tb}$.  By definition, $\pi(M_0)$ is a strongly closed $*$-subalgebra of $M_A$.  Moreover, by the strong density of $\pi(M_0)$ in $M_A$, it is clear that $\pi(M_0)\subseteq (M_A)_{\tb}$ and, moreover, if $a\in S_n(A)$, then $\pi(a) \in S_n(M_0)$.  Now suppose that $(\pi(x_m))_{m\in \mathbb N}$ is a $\|\cdot\|_{\varphi_A}^\#$-Cauchy sequence from $S_1(\pi(M_0))$.  By axiom (7), we can write $x_m=i(y_m)$, where $y\in S_1(A)$ and $i$ is an appropriate embedding; by completeness of $S_1(A)$, it follows that $(y_m)_{m\in \mathbb N}$ has a $\|\cdot\|_\varphi^\#$ limit in $S_1(A)$ and hence $(\pi(x_m))_{m\in \mathbb{N}}$ has a $\|\cdot\|_{\varphi_A}^\#$ limit in $S_1(\pi(M_0))$. Finally, $\pi(M_0)$ is closed under $h_a(\log(\Delta))$ for all $a \in \mathbb{N}$ by axioms (8)-(11).

At this point, we have shown that, to a model $A$ of $T_{W^*}$, we can associate a $*$-algebra $M_0$ of operators on a Hilbert space $H$ and a faithful $*$-representation $\pi:M_0\to \cal B(H)$ such that, setting $M_A$ to be the strong closure of $\pi(M_0)$ and letting $\vp_A$ denote the vector state on $M_A$ corresponding to the identity element of $M_0$, we have $\cal D(M_A,\vp_A)=A$.  In particular, if we start with a \wstar-probability space $(M,\vp)$ and let $A:=\cal D(M,\vp)$, we have (using that the totally bounded vectors in $M$ are dense in $M$) that $(M_A,\vp_A)=(M,\vp)$ and $\cal D(M_A,\vp_A)=\cal D(M,\vp)$.

Finally, we observe that embeddings between \wstar-probability spaces correspond to embeddings between the corresponding dissections.  One direction of this claim is obvious; to see the other, suppose that $\cal D(M,\vp)\subseteq \cal D(N,\psi)$.  By Lemma \ref{expectation} above, we must show that, for each $a\in M$, the $\|\cdot\|_\vp$-distance between $a$ and the self-adjoint elements of $M$ is the same as the $\|\cdot\|_\psi$-distance between $a$ and the self-adjoint elements of $N$.  However this follows from the fact that the embedding preserves the interpretation of $\vP$ and the density of bounded elements.
\end{proof}

\begin{remark}
The above theory $T_{W^*}$ consists of $\forall\exists$-axioms (see axioms (7) and (12)) whereas Dabrowski's theory is universal. We believe that the increase in quantifier complexity of the axiomatization is justified by having access to full multiplication. 
\end{remark}

\section{Connection with the Ocneanu ultraproduct}\label{SectionOcneanu}  

In this section, we show that the pre-existing ultraproduct construction for \wstar-probability spaces, known as the Ocneanu ultraproduct, corresponds to the model-theoretic ultraproduct via the equivalence of categories proven in the previous section.
We begin by recalling the construction of the Ocneanu ultraproduct.

Given a family $(M_i,\vp_i)_{i\in I}$ of \wstar-probability spaces and an ultrafilter $\cU$ on $I$, we set
\[
\cal I:=\cal I(M_i,\varphi_i):=\{(m_i) \in \ell^\infty(M_i,I) : \lim_\cU \|m_i\|_{\vp_i}^\# = 0 \}
\]
 and set
 \[
\cal M := \cal M(M_i,\varphi_i):=\{ (m_i) \in \ell^\infty(M_i,I) : (m_i)\cal I+\cal I(m_i)\subset \cal I \}.
\] 
By construction, $\cal I$ is a closed two-sided ideal of $\cal M$.  The \textbf{Ocneanu ultraproduct} is the quotient $\prod^\cU M_i := \cal M/\cal I$.  The resulting \cstar-algebra is a $\sigma$-finite von Neumann algebra with faithful normal state given by $\varphi^\cU :=\lim_\cU \vp_i$.  Given $(m_i)\in \cal M$, we write $(m_i)^\star$ for its equivalence class in $\prod^\cU M_i$.  For use later on, we record the well-known fact that if $m\in \prod^\cU M_i$, then we may find $(m_i)\in \cal M$ such that $\|m_i\|=\|m\|$ for all $i\in I$; this is a consequence of the observation that $\prod^\cU M_i$ is the quotient of the \cstar-algebra $\cal M$ by the ideal of sequences which go to $0$ in norm along $\cU$.

Meanwhile, for a family of $\cal L_{W^*}$-structures $(A_i)_{i\in I}$ and an ultrafilter $\cU$ on $I$, we let $\prod_\cU A_i$ denote the model-theoretic ultraproduct, which is once again an $\cal L_{W^*}$-structure.  For an element $\prod_{i\in I}A_i$, we let $(a_i)^\bullet$ denote its equivalence class in $\prod_\cU A_i$.  The main result of this section is the following:

\begin{thm}\label{ocneanutheorem}
Suppose that $(M_i,\vp_i)$ is a family of \wstar-probability spaces for all $i \in I$ and $\cU$ is an ultrafilter on $I$.  Then $\mathcal{D}(\prod^\cU (M_i,\varphi_i))\cong \prod_\cU \mathcal{D}(M_i,\varphi_i)$. 
\end{thm}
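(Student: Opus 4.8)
The plan is to construct mutually inverse $\cL_{W^*}$-structure isomorphisms between $\cal D(\prod^\cU (M_i,\vp_i))$ and $\prod_\cU \cal D(M_i,\vp_i)$, sort by sort. First I would fix notation: write $(N,\psi) := \prod^\cU(M_i,\vp_i)$ for the Ocneanu ultraproduct. An element of the sort $S_n$ in $\prod_\cU \cal D(M_i,\vp_i)$ is an $\cU$-equivalence class $(a_i)^\bullet$ with each $a_i \in S_n(M_i)$; using the remark after Proposition~\ref{phidense}-style normalization (more precisely the fact that in $W^*$-Prob one may choose representatives of fixed norm, together with the explicit bounds $\|a_i y_i\|_{\vp_i}\le n\|y_i\|_{\vp_i}$), such a bounded sequence lies in $\cal M(M_i,\vp_i)$, so it defines $m := (a_i)^\star \in N$. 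The first key step is to check that $m$ is totally $n$-bounded in $(N,\psi)$: for any $y\in N$ pick a representative $(y_i)$ with $\|y_i\|=\|y\|$, and then $\psi((my)^*(my)) = \lim_\cU \vp_i((a_iy_i)^*(a_iy_i)) \le n \lim_\cU \vp_i(y_i^*y_i) = n\,\psi(y^*y)$, using that $a_i\in S_n(M_i)$; the same for $m^*$, $ym$, $ym^*$. So $m\in S_n(N)$, i.e.\ $m$ belongs to the sort $S_n$ of $\cal D(N,\psi)$. This defines a map $\Phi_n \colon S_n(\prod_\cU\cal D(M_i,\vp_i)) \to S_n(\cal D(N,\psi))$.

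Next I would verify that $\Phi := (\Phi_n)_n$ is a well-defined isometric $\cL_{W^*}$-embedding. Well-definedness and isometry both come from the identity $\|(a_i)^\star\|_\psi^\# = \lim_\cU \|a_i\|_{\vp_i}^\#$, which is precisely how the metrics on both ultraproducts are computed, together with the definition of the ideal $\cal I$; two sequences are $\cU$-close in the model-theoretic sense iff their difference goes to $0$ in $\|\cdot\|^\#$ along $\cU$ iff it lies in $\cal I$. Compatibility with the algebra operations $+_n, -_n, \cdot_n, *_n$, the scalars $\lambda_n$, the constants $0_n,1_n$, and the connecting maps $\iota_{m,n}$ is immediate since all of these are computed representative-wise in both ultraproducts. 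For the state predicates $\vp_n$ one uses $\psi((a_i)^\star) = \lim_\cU \vp_i(a_i)$, which is the definition of $\psi = \vp^\cU$. The only genuinely substantive function symbols are $\vP_n, \vQ_n, \vR_n$; here I would invoke the Rieffel--Van Daele description, specifically that $\vR$ is determined by Facts~\ref{RvDfacts}(6)--(7) (i.e.\ $\Theta x\om = (2-R)x^*\om$), which is a statement about the original algebra operations and the inner product, hence preserved under both ultraproduct constructions; then $\vP_n = \frac{1}{2}(\vR_n + \Theta|_{\cal K\text{-part}})$-type formulas, or more cleanly the axioms (8)--(10) in $T_{W^*}$, pin down $\vP_n,\vQ_n,\vR_n$ up to the data already shown to be preserved. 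Since $\prod_\cU\cal D(M_i,\vp_i) \models T_{W^*}$ (by \L o\'s's theorem, as each $\cal D(M_i,\vp_i)\models T_{W^*}$ by Theorem~\ref{EquivalenceOfModels}) and $\cal D(N,\psi)\models T_{W^*}$ likewise, and both structures satisfy these $\forall\exists$ axioms characterizing $\vP,\vQ,\vR$, the maps $\Phi_n$ must intertwine them.

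The last step is surjectivity of $\Phi$, which is where the real content lies and which I expect to be the main obstacle. Given $m \in S_n(N) = S_n(\cal D(N,\psi))$, we must produce representatives $(a_i)$ with $a_i \in S_n(M_i)$ and $(a_i)^\star = m$. Choosing an arbitrary bounded lift $(m_i)\in \cal M$ of $m$ is not enough: the $m_i$ need not be totally bounded with the right constant, and the issue is exactly the one flagged in the introduction, namely that totally bounded elements of an (Ocneanu) ultraproduct need not come from totally bounded elements of the factors in an obvious way. The way around this is to use Theorem~\ref{characterization}: one knows $S_n(N)$ is obtained from $(N)_{\tb}$, and $(N)_{\tb}$ can be reconstructed from a good subalgebra; concretely, I would first handle elements $m$ with $m\om\in E_j$ for some $j$ (a compact spectral subspace of $\Delta_\psi$), using Ando--Haagerup's analysis \cite{AH} showing that the spectral subspaces of the Ocneanu ultraproduct are the ``ultraproducts'' of the spectral subspaces of the factors --- so such $m$ lifts to a sequence $(m_i)$ with $m_i\om \in E_{j'}$, and then Corollary~\ref{tot} gives that each $m_i$ is totally $(j')^{3/2}\|m_i\|$-bounded, hence after truncating we get the required representative. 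Then, to pass from these spectral-subspace elements to a general $m\in S_n(N)$, one approximates $m$ in $\|\cdot\|_\psi^\#$ by the Bochner-average elements $m_r$ (which lie in compact spectral subspaces and are totally $n$-bounded, by the Fact preceding the final proof of Theorem~\ref{characterization}), lifts each $m_r$, and uses $\|\cdot\|^\#$-completeness of $S_n$ in the model-theoretic ultraproduct together with a diagonal argument over $\cU$ to assemble a single representative. Finally, from surjectivity and isometry it follows that $\Phi$ is an isomorphism of $\cL_{W^*}$-structures, which is the claim; and this isomorphism is visibly natural, giving the equivalence at the level of the categories involved.
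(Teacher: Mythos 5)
Your construction of the embedding $\Phi_n\colon \prod_\cU S_n(M_i)\to S_n(N)$ and the verification that it respects the $\cL_{W^*}$-structure is essentially the paper's first step (the paper phrases it as showing $(a_i)\in\cal M(M_i,\vp_i)$ and $(a_i)^\bullet=(a_i)^\star\in S_n(\prod^\cU M_i)$, using exactly the estimate $\|a_im_i\|_{\vp_i}^\#\le n\|m_i\|_{\vp_i}^\#$ that you write down). You also correctly identify the key external inputs for the converse direction: the Ando--Haagerup description of $\cal M$ via compact spectral subspaces and the total-boundedness estimate $\|y_i\|_{\rt},\|y_i^*\|_{\rt}\le C_{a,i/2}\|y_i\|$ for elements of $M_i(\sigma^{\vp_i},[-a,a])$.

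The gap is in your final assembly step for surjectivity. You set yourself the task of lifting each $m\in S_n(N)$ to a representative sequence with every term in $S_n(M_i)$, and propose to do this by lifting the Bochner averages $m_r$ (which live in compact spectral subspaces) and then invoking ``$\|\cdot\|^\#$-completeness of $S_n$ in the model-theoretic ultraproduct together with a diagonal argument.'' But the spectral-subspace lifts of $m_r$ only land in sorts $S_{n_r}$ with constants $n_r\approx C_{a_r,i/2}$ that blow up as $r\to 0$; a $\|\cdot\|^\#$-limit of elements drawn from ever-larger sorts is not produced by completeness of any single sort $S_n$, so this argument only shows that $m$ lies in the $\|\cdot\|^\#$-closure of $\bigcup_k\Phi(S_k)$, i.e.\ density of the image --- it does not place $m$ in $\Phi(S_n)$. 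The paper avoids this exact difficulty by never proving sort-by-sort surjectivity directly: since $\prod_\cU\cal D(M_i,\vp_i)\models T_{W^*}$ by \L o\'s, Theorem~\ref{EquivalenceOfModels} (whose proof is where Theorem~\ref{characterization} is actually deployed, via the ``good subalgebra'' argument) already identifies this ultraproduct with the dissection of the \wstar-probability space $(M_A,\vp_A)$ obtained by completing it. It then suffices to show that the resulting embedding of von Neumann algebras $M_A\hookrightarrow\prod^\cU M_i$ is \emph{onto}, for which $\|\cdot\|^\#$-density of $\bigcup_k\prod_\cU S_k(M_i)$ in the unit ball of $\prod^\cU M_i$ --- exactly what your spectral-subspace approximation delivers --- is enough; the equality of sorts then comes for free from $\cal D(M_A,\vp_A)=\prod_\cU\cal D(M_i,\vp_i)$. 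So your proof becomes correct once you reroute the endgame through Theorems~\ref{EquivalenceOfModels} and~\ref{characterization} rather than attempting a direct representative-wise lift into $S_n$.
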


Before we prove the previous theorem, we need a few preliminary facts about \emph{spectral subspaces}.  Given a \wstar-probability space $(M,\varphi)$ and $a>0$, one can define the spectral subspace $M(\sigma^\varphi,[-a,a])$, which is a particular subset of $M$; see \cite[Section 2.2]{AH} for the precise definition.  We simply record the facts about spectral subspaces that we need here:

\begin{facts}\label{modularsubspaces}

\

\begin{enumerate}
    \item $M(\sigma^\varphi,[-a,a])$ is closed under adjoints.
    \item If $(M_i,\varphi_i)$ is a family of \wstar-probability spaces, then a sequence $(m_i)\in \ell^\infty(M_i,I)$ belongs to $\cal M(M_i,\varphi_i)$ if and only if:  for every $\epsilon>0$, there is $(x_i)\in \cal M(M_i,\varphi_i)$ and $a>0$ such that each $x_i\in M_i(\sigma^{\varphi_i},[-a,a])$, $\lim_\cU \|x_i-a_i\|_{\varphi^\cU}^\#<\epsilon$, and $\|(y_i)^\star\|\leq \|(x_i)^\star\|$.
    \item If $x\in M(\sigma^\varphi,[-a,a])$ for some $a>0$, then the map $t\mapsto \sigma^\varphi_t(x)$ extends to an entire function $\mathbb{C}\to M$; moreover, for any $z\in \mathbb{C}$, there is a constant $C_{a,z}$ depending only on $a$ and $z$ such that $\|\sigma^\varphi_z(x)\|\leq C_{a,z}\|x\|$.
    \item If $x\in M(\sigma^\varphi,[-a,a])$, then $x\in M_{\tb}$ and $\|x\|_{\rt},\|x^*\|_{\rt}\leq C_{a,i/2}\|x\|$. 
\end{enumerate}
\end{facts}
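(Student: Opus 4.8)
The four assertions are standard consequences of Arveson spectral subspace theory together with the structural analysis of Ando and Haagerup \cite{AH}, and I would arrange the argument so as to isolate the one genuinely hard input. The plan is to deduce (1), (3), and (4) directly from general modular theory, and to invoke \cite{AH} for (2), which is the main obstacle. For (1), I would use the standard fact that the Arveson spectrum reflects under the adjoint, $\operatorname{Sp}_\sigma(x^*) = -\operatorname{Sp}_\sigma(x)$; this is immediate from the identity $\sigma_t(x^*) = \sigma_t(x)^*$ (valid since $(\Delta^{it}x\Delta^{-it})^* = \Delta^{it}x^*\Delta^{-it}$) and from the definition of $M(\sigma^\vp, E)$ via the vanishing of $\int f(t)\sigma_t(\cdot)\,dt$ against all $f\in L^1(\mathbb R)$ whose Fourier transform is supported off $E$. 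Since $[-a,a]$ is symmetric about the origin, reflection preserves it, so $x\in M(\sigma^\vp,[-a,a])$ forces $x^*\in M(\sigma^\vp,[-a,a])$. For (3), the spectral condition says precisely that the operator-valued Fourier transform of $t\mapsto\sigma^\vp_t(x)$ is supported in the compact set $[-a,a]$, so a Paley--Wiener argument yields the entire extension $z\mapsto\sigma^\vp_z(x)$. For the norm bound I would factor $\sigma^\vp_z(x)=\sigma^\vp_{\operatorname{Re}z}(\sigma^\vp_{i\operatorname{Im}z}(x))$ and use that $\sigma^\vp_s$ is isometric to reduce to purely imaginary exponents; applying Phragm\'en--Lindel\"of to the two bounded analytic functions $z\mapsto e^{\mp iaz}\sigma^\vp_z(x)$ on the respective half-planes gives $\|\sigma^\vp_z(x)\|\leq e^{a|\operatorname{Im}z|}\|x\|$, so one may take $C_{a,z}=e^{a|\operatorname{Im}z|}$.

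For (4), I would use Tomita--Takesaki theory (Theorem \ref{TT}) to exhibit an explicit element of $M'$ implementing right multiplication by $x$. Since $x\in M(\sigma^\vp,[-a,a])$ is entire by (3), I set $z':=J\sigma^\vp_{i/2}(x)^*J$, which lies in $M'$ because $JMJ=M'$. Using $J\om=\om$, $\Delta^{\pm 1/2}\om=\om$, $S=J\Delta^{1/2}$, and $\sigma^\vp_{i/2}(x)^*=\sigma^\vp_{-i/2}(x^*)=\Delta^{1/2}x^*\Delta^{-1/2}$, a short computation gives $z'\om=J\Delta^{1/2}x^*\om=Sx^*\om=x\om$. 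By Lemma \ref{boundedlemma}, $x$ is then right bounded with $\pi'(x)=z'$ and $\|x\|_{\rt}=\|z'\|=\|\sigma^\vp_{i/2}(x)\|\leq C_{a,i/2}\|x\|$ by (3). Applying the same argument to $x^*$, which lies in the same spectral subspace by (1), bounds $\|x^*\|_{\rt}$ as well, so $x\in M_{\tb}$ with the stated right-norm estimates.

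The main obstacle is (2), which is not elementary and is essentially the Ando--Haagerup description of the multiplier algebra $\cal M(M_i,\vp_i)$; I would cite their work for the substance. The point is that membership in $\cal M$, defined by the two-sided condition $(m_i)\cal I + \cal I(m_i)\subseteq\cal I$, is equivalent to an asymptotic control of the modular spectrum: approximability in $\|\cdot\|_{\vp^\cU}^\#$, uniformly along $\cU$, by representing sequences drawn from spectral subspaces of uniformly bounded width $a$ with norms controlled by that of the original sequence. I would obtain the two implications from \cite{AH}. The ``only if'' direction uses their Gaussian smearing of $(m_i)$ along the modular flow (the Bochner-integral device already employed in Section \ref{tb}) to produce approximants $(x_i)$ of bounded spectral width while keeping the $\#$-error small and the representing norms controlled; the ``if'' direction checks that each spectrally bounded block is a genuine multiplier, using (4) to see its entries are totally bounded with uniform right-norm bounds, and then passes to the $\|\cdot\|_{\vp^\cU}^\#$-limit, using that $\cal M$ is $\|\cdot\|_{\vp^\cU}^\#$-closed inside $\ell^\infty(M_i,I)$.
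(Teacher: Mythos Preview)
Your treatment of (1), (2), and (3) matches the paper's: both of you simply point to the Arveson spectral subspace formalism and to \cite{AH} (the paper cites \cite[Section 2.2]{AH}, \cite[Proposition 4.11]{AH}, and the proof of \cite[Lemma 4.13]{AH} respectively), and your added detail on the Paley--Wiener/Phragm\'en--Lindel\"of bound $C_{a,z}=e^{a|\operatorname{Im}z|}$ is a welcome refinement of what the paper leaves implicit.

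For (4), your argument is correct but takes a genuinely different route from the paper's. The paper estimates $\|yx\om\|$ directly for arbitrary $y\in M$: it writes $\|yx\om\| = \|(JyJ)S\Delta^{-1/2}x\om\| = \|(JyJ)S\sigma^\vp_{i/2}(x)\om\|$, then uses $JyJ\in M'$ and the antimultiplicativity of $S$ to bound this by $\|\sigma^\vp_{i/2}(x)^*\|\cdot\|(JyJ)\om\| = \|\sigma^\vp_{i/2}(x)\|\cdot\|y\om\|$. You instead exhibit the commutant element outright: setting $z'=J\sigma^\vp_{i/2}(x)^*J\in M'$, you verify $z'\om = J\Delta^{1/2}x^*\om = Sx^*\om = x\om$ and then invoke Lemma \ref{boundedlemma} to conclude $\pi'(x)=z'$ and $\|x\|_{\rt}=\|\sigma^\vp_{i/2}(x)\|$. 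Both arguments rest on the same ingredients ($S=J\Delta^{1/2}$, $JMJ=M'$, the analytic extension from (3)), but yours is arguably cleaner: it identifies $\pi'(x)$ explicitly rather than merely bounding it, yielding the exact equality $\|x\|_{\rt}=\|\sigma^\vp_{i/2}(x)\|$ rather than just the inequality. The paper's computation, on the other hand, avoids any appeal to Lemma \ref{boundedlemma} and is self-contained in three lines.
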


\begin{proof}
Item (1) follows from the definition; see also \cite[Section 2.2]{AH}.  Item (2) is \cite[Proposition 4.11]{AH}.  Item (3) can be found in the proof of \cite[Lemma 4.13]{AH}.  Item (4) is explicit in \cite[Lemma 4.13]{AH}, but we include the proof for the sake of completeness.  Suppose $x\in M(\sigma^\varphi,[-a,a])$ and take $y\in M$; we show that $\|yx\omega\|\leq C_{a,i/2}\cdot \|x\|\cdot \|y\omega\|$.  To see this, we calculate as follows:
\begin{alignat}{2}
\|yx\omega\|&=\|(JyJ)(S\Delta^{-1/2}x\omega)\|\notag \\ \notag
            &=\|(JyJ)S\sigma^\varphi_{i/2}(x)\omega)\|\\ \notag
            &\leq \|(\sigma^\varphi_{i/2}(x))^*(JyJ)(\omega)\|\\ \notag
            &\leq \|\sigma_{i/2}(x)^*\|\cdot \|(JyJ)(\omega)\|\\ \notag
            &\leq C_{a,i/2}\cdot \|x\| \cdot \|y\omega\|. \notag
\end{alignat}
The result for $x^*$ is now immediate from (1).
\end{proof}

\begin{proof} [Proof of Theorem \ref{ocneanutheorem}]

First suppose that $(a_i)^\bullet \in \prod_\cU S_n(M_i)$.  It is clear that $(a_i) \in \ell^\infty(M_i,I)$; we wish to show that $(a_i)\in \cal M$.  To see this, suppose that $(m_i) \in \cal I$; we show that $(a_im_i),(m_ia_i) \in \cal I$.  Suppose, towards a contradiction, that $(a_im_i) \notin \cal I$ and set $L:=\lim_\cU \|a_im_i\|_{\vp_i}^\# \neq 0$.  Consider the set of $i \in I$ for which $\|m_i\|_{\vp_i}^\# < L/2n$, which belongs to $\cU$ since $(m_i)\in \cal I$.  For a $\cU$-large subset of these $i$, we have that $\|a_im_i\|_{\vp_i}^\#>L/2$; for these $i$, we see that the operator norm of $a_i$ is greater than $n$ or the right norm of $a_i^*$ is greater than $n$, which is a contradiction.  To see this, assuming $\|a_i\|, \|a_i^*\|_{\rt} \leq n$, we have 
\[
\|a_im_i\|_{\vp_i}^\# = \sqrt{\frac{\|a_im_i\|_{\vp_i}^2+\|m_i^*a_i^*\|_{\vp_i}^2}{2}} \leq \sqrt{\frac{\|a_i\|^2\|m_i\|_{\vp_i}^2+\|a_i^*\|_{\rt}^2\|m_i^*\|_{\vp_i}^2}{2}} \leq n\|m_i\|_{\vp_i}^\#,
\]
as claimed.  The proof that $(m_ia_i)\in \cal I$ proceeds similarly, using that each $a_i$ is right $n$-bounded and $a_i^*$ is left $n$-bounded.  It is now clear that $(a_i)^\bullet=(a_i)^\star$ and that this element is in $S_{n}(\prod^{\cU}(M_i,\vp_i))$.

By Theorem \ref{EquivalenceOfModels}, we now have that the von Neumann algebra associated to $\prod_\cU \cal D(M_i,\varphi_i)$ embeds into $\prod^\cU(M_i,\varphi_i)$; it suffices to show that this embedding is actually onto.  To see this, it suffices to show that, given any $x\in \prod^\cU (M_i,\varphi_i)$ with $\|x\|=1$ and any $\epsilon>0$, there is $n>0$ and $y\in \prod_\cU S_n(M_i)$ such that $\|x-y\|^\#_{\varphi^\cU}<\epsilon$.  Write $x=(x_i)^\star$ and take $y=(y_i)^\star$ as in Fact \ref{modularsubspaces}(2) for some $a>0$.  We show that $y\in \prod_\cU S_n(M_i)$ for some $n>0$.  Indeed, as mentioned above, we may assume that $\|y_i\|\leq 1$ for all $i\in I$ whence, by Fact \ref{modularsubspaces}(4), we have that $\|y_i\|_{\rt},\|y_i^*\|_{\rt}\leq C_{a,i/2}$ for all $i\in I$ and thus $y\in \prod_\cU S_n(M_i)$ when $n\geq C_{a,i/2}$.
\end{proof}

In the rest of the paper, we follow model-theoretic conventions and let $\prod_\cU (M_i,\vp)$ (or simply $\prod_\cU M_i$ if the states are clear from context) denote the Ocneanu ultraproduct.

\section{Expanding the language by the modular automorphism group}\label{SectionExpanding}

In this section, we show how to extend the theory $T_{W^*}$ to a theory $T_{W^*-\md}$ in a language $\cal L_{W^*-\md}$ extending $\cal L_{W^*}$ which captures the modular automorphism group of the state.  That such an extension should be possible is suggested by the following result, which is essentially \cite[Theorem 4.1]{AH}.  (In \cite{AH}, the result is only stated for ultrapowers, but the proof goes through \emph{mutatis mutandis} for ultraproducts.)

\begin{fact}
Suppose that $(M_i,\vp_i)_{i\in I}$ is a family of \wstar-probability spaces and $\cU$ is an ultrafilter on $I$.  Set $(M,\vp) = \prod_\cU (M_i,\vp_i)$.  Then, for any $t\in \mathbb R$ and $(x_i)^\bullet\in M$, we have
\[
\sigma_t^\vp(x_i)^\bullet = (\sigma_t^{\vp_i}(x_i))^\bullet.
\]
\end{fact}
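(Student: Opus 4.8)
The plan is to deduce the commutation $\sigma_t^\varphi(x_i)^\bullet = (\sigma_t^{\varphi_i}(x_i))^\bullet$ from Fact 6.1 (the Ando--Haagerup result stated for the Ocneanu ultraproduct) together with Theorem \ref{ocneanutheorem}, which identifies $\mathcal D(\prod^\cU(M_i,\varphi_i))$ with $\prod_\cU \mathcal D(M_i,\varphi_i)$. The only subtlety is bookkeeping: the left-hand side is computed using the modular automorphism group of the \emph{model-theoretic} ultraproduct structure, while Fact 6.1 is phrased in terms of the modular automorphism group of the Ocneanu ultraproduct; these agree precisely because the isomorphism of Theorem \ref{ocneanutheorem} is an isomorphism of \wstar-probability spaces, and the modular automorphism group is intrinsic to a \wstar-probability space.

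First I would fix $t\in\mathbb R$ and an element $(x_i)^\bullet$ of $M=\prod_\cU(M_i,\varphi_i)$. Under the identification of Theorem \ref{ocneanutheorem}, the von Neumann algebra attached to the model-theoretic ultraproduct $\prod_\cU \mathcal D(M_i,\varphi_i)$ is (canonically isomorphic, as a \wstar-probability space, to) the Ocneanu ultraproduct $\prod^\cU(M_i,\varphi_i)$, with states matching; moreover by the proof of Theorem \ref{ocneanutheorem} the element $(x_i)^\bullet$ corresponds to the element $(x_i)^\star$ of the Ocneanu ultraproduct. Since the modular automorphism group is determined by the pair $(M,\varphi)$ alone (Tomita--Takesaki theory, Theorem \ref{TT}), the automorphism $\sigma_t^\varphi$ of the model-theoretic ultraproduct is carried, under this isomorphism, to the modular automorphism $\sigma_t^{\varphi^\cU}$ of the Ocneanu ultraproduct. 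Thus $\sigma_t^\varphi((x_i)^\bullet)$ corresponds to $\sigma_t^{\varphi^\cU}((x_i)^\star)$.

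Next I would invoke Fact 6.1, which gives $\sigma_t^{\varphi^\cU}((x_i)^\star)=(\sigma_t^{\varphi_i}(x_i))^\star$ in the Ocneanu ultraproduct. Transporting back through the isomorphism of Theorem \ref{ocneanutheorem}, the element $(\sigma_t^{\varphi_i}(x_i))^\star$ corresponds to $(\sigma_t^{\varphi_i}(x_i))^\bullet$ in the model-theoretic ultraproduct --- here one should note that $(\sigma_t^{\varphi_i}(x_i))_i$ is a bounded sequence (each $\sigma_t^{\varphi_i}$ is isometric on $M_i$), so it indeed represents an element of $\prod_\cU M_i$, and that Theorem \ref{ocneanutheorem} matches the $(\cdot)^\star$ and $(\cdot)^\bullet$ classes of any bounded sequence. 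Combining the two displayed identities yields $\sigma_t^\varphi((x_i)^\bullet)=(\sigma_t^{\varphi_i}(x_i))^\bullet$, as desired.

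The main obstacle, such as it is, is not a hard estimate but making precise that the isomorphism in Theorem \ref{ocneanutheorem} intertwines the two modular automorphism groups; once one accepts that modular theory is functorial under isomorphisms of \wstar-probability spaces (which is immediate from the Rieffel--Van Daele description in Section \ref{SectionRvD}, since $P$, $Q$, $R$ and hence $\Delta^{it}$ are built canonically from $(M,\omega_\varphi)$), the rest is pure transport of structure. An alternative, more self-contained route would bypass Fact 6.1 and instead argue directly inside the model: using that $\mathcal D(M,\varphi)$ is closed under $h_a(\log\Delta)$ and under the operators $\mathbf P,\mathbf Q,\mathbf R$, one could express $\sigma_t^\varphi$ on totally bounded elements via the Rieffel--Van Daele formula $\Delta^{it}=(2-R)^{it}R^{-it}$ and check that this expression commutes with model-theoretic ultraproducts by uniform continuity of the relevant functional calculus; but this is essentially a reproof of the Ando--Haagerup theorem, so citing Fact 6.1 is the efficient path.
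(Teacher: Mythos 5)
The paper offers no proof of this statement: it is a \emph{fact} in the paper's sense, cited as essentially \cite[Theorem 4.1]{AH} together with the remark that, although Ando and Haagerup state their result only for ultrapowers, the proof goes through \emph{mutatis mutandis} for ultraproducts. Your proposal does not supply a proof either, and the way it avoids doing so is circular. By the convention fixed at the end of Section \ref{SectionOcneanu}, $\prod_\cU(M_i,\vp_i)$ \emph{is} the Ocneanu ultraproduct, and Theorem \ref{ocneanutheorem} has already identified the classes $(x_i)^\bullet$ and $(x_i)^\star$; so the statement you are asked to prove is precisely ``the Ando--Haagerup result stated for the Ocneanu ultraproduct.'' There is no prior ``Fact 6.1'' in the paper with that content, so citing it as your main input amounts to citing the conclusion. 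All of the mathematical difficulty --- that the modular flow of the ultraproduct state acts coordinatewise --- sits inside that citation; the transport-of-structure bookkeeping you carry out (modular theory is intrinsic, isomorphisms of \wstar-probability spaces intertwine modular flows) is correct but carries no weight here, since the two objects being compared are the same \wstar-probability space by construction.

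Two further points. First, if what you actually intend to cite is \cite[Theorem 4.1]{AH} itself, you must address the fact that it is stated only for ultrapowers of a single $(M,\vp)$; extending it to ultraproducts of varying $(M_i,\vp_i)$ requires rechecking the Ando--Haagerup argument, and that check is the only content a proof of this Fact could have, short of reproving the theorem outright via the spectral-subspace route you sketch at the end. Second, your parenthetical claim that $(\sigma_t^{\vp_i}(x_i))_i$ represents an element of $\prod_\cU M_i$ because each $\sigma_t^{\vp_i}$ is isometric is false as stated: membership in the Ocneanu ultraproduct requires the multiplier condition $(m_i)\,\cal I+\cal I\,(m_i)\subseteq\cal I$, not mere operator-norm boundedness, and verifying that the modular flows preserve $\cal M(M_i,\vp_i)$ and $\cal I(M_i,\vp_i)$ is itself part of what \cite[Theorem 4.1]{AH} establishes.
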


An immediate consequence of the previous result, by Beth's definability theorem (see, for example, \cite{munster}), is the following:
\begin{cor}
For all $t\in \mathbb R$, $\sigma_t$ is a $T_{W^*}$-definable function.  Moreover, if $\vp_t$ is a $T_{W^*}$-definable predicate defining $\sigma_t$, then the map $t\mapsto \vp_t$ is continuous with respect to the logic topology.
\end{cor}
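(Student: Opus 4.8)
The plan is to derive both assertions from the \emph{Fact} preceding the statement --- that the modular flow commutes with the Ocneanu ultraproduct --- together with Theorem~\ref{ocneanutheorem}, via the continuous-logic form of Beth's definability theorem. First I would record that $\sigma_t^\vp$ behaves well on the sorts. By Fact~\ref{515}, $\sigma_t^\vp(x)\om=\Delta_\vp^{it}(x\om)$ and $\|\sigma_t^\vp(x)\|=\|x\|$ for every $x\in M$. Since $\Delta_\vp^{it}$ is unitary and, by Theorem~\ref{TT}, normalizes both $M$ and $M'$, for right-bounded $x$ we may write (Lemma~\ref{boundedlemma}) $x\om=\pi_\vp'(x)\om$ and then $\sigma_t^\vp(x)\om=\bigl(\Delta_\vp^{it}\pi_\vp'(x)\Delta_\vp^{-it}\bigr)\om$ with $\Delta_\vp^{it}\pi_\vp'(x)\Delta_\vp^{-it}\in M'$, so $\sigma_t^\vp(x)$ is right bounded with $\|\sigma_t^\vp(x)\|_{\rt}=\|x\|_{\rt}$; applying the same to $x^*$ and using $\sigma_t^\vp(x)^*=\sigma_t^\vp(x^*)$ gives $\|\sigma_t^\vp(x)^*\|_{\rt}=\|x^*\|_{\rt}$, while $\|\sigma_t^\vp(x)\|_\vp=\|\Delta_\vp^{it}(x\om)\|=\|x\|_\vp$ (and likewise for $x^*$) gives $\|\sigma_t^\vp(x)\|_\vp^\#=\|x\|_\vp^\#$. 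Hence $\sigma_t^\vp$ is a $\|\cdot\|_\vp^\#$-isometric bijection of $S_n(M)$ onto itself for each $n$, so we may expand $\cL_{W^*}$ to a language $\cL^t$ by adjoining, for each $n$, a function symbol $S_n\to S_n$ with modulus of uniform continuity $\delta(\e)=\e$ together with the evident axioms making it commute with the inclusions, and interpret it in each dissection $\cal D(M,\vp)$ as $\sigma_t^\vp$.

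Let $\mathcal{C}_t$ be the class of the resulting expanded dissections and set $T'_t:=\Th(\mathcal{C}_t)$. Every model of $T_{W^*}$ is a dissection by Theorem~\ref{EquivalenceOfModels} and hence expands to a member of $\mathcal{C}_t$; I then claim this expansion is \emph{unique}, equivalently that the new symbol is implicitly $T_{W^*}$-definable. For this it suffices that $\mathcal{C}_t$ be an elementary class. It is closed under ultraproducts: the new symbol is interpreted coordinatewise in $\prod_\cU\cal D(M_i,\vp_i)$, sending $(x_i)^\bullet$ to $(\sigma_t^{\vp_i}(x_i))^\bullet$, and by the Fact (read through the identification of Theorem~\ref{ocneanutheorem}) this equals $\sigma_t^{\vp^\cU}((x_i)^\bullet)$, the modular flow of the ultraproduct. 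It is also closed under ultraroots: the constant-family case of the Fact shows $\sigma_t$ commutes with the diagonal embedding $\iota\colon\cal D(M,\vp)\to\cal D(M,\vp)^\cU$, so if an ultrapower $(A,G)^\cU=(A^\cU,G^\cU)$ lies in $\mathcal{C}_t$ --- whence $G^\cU=\sigma_t^{\vp_{A^\cU}}$ --- restricting this identity along $\iota$ and using injectivity of $\iota$ forces $G=\sigma_t^{\vp_A}$. Therefore $\mathcal{C}_t=\operatorname{Mod}(T'_t)$ and the expansion is unique, so Beth's definability theorem (see \cite{munster}) yields that the new symbol --- equivalently $\sigma_t$ --- is explicitly $T_{W^*}$-definable: there is a $T_{W^*}$-definable predicate $\vp_t(x,y)$ with $\vp_t^{(M,\vp)}(a,b)=\|\sigma_t^\vp(a)-b\|_\vp^\#$ on every model. (Alternatively, one can quote directly the characterization that a predicate with a model-independent modulus of uniform continuity which commutes with all ultraproducts is definable, the modulus being supplied by the isometry above.) The crux of this paragraph --- the part one must not skip --- is the closure under ultraroots via the constant-family case of the Fact; the rest is mechanical once the Fact is in hand.

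For the ``moreover'' clause, fix a model $(M,\vp)$ of $T_{W^*}$ and elements $a,b$. Since $\sigma_t^\vp-\sigma_s^\vp=\sigma_s^\vp\circ(\sigma_{t-s}^\vp-\id)$ and $\sigma_s^\vp$ is a linear $\|\cdot\|_\vp^\#$-isometry, we get $\|\sigma_t^\vp(a)-\sigma_s^\vp(a)\|_\vp^\#=\|\sigma_{t-s}^\vp(a)-a\|_\vp^\#$, which tends to $0$ as $t\to s$ because $\Delta_\vp^{iu}(a\om)\to a\om$ and $\Delta_\vp^{iu}(a^*\om)\to a^*\om$ as $u\to 0$, by strong continuity of $u\mapsto\Delta_\vp^{iu}$. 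Hence $t\mapsto\vp_t^{(M,\vp)}(a,b)=\|\sigma_t^\vp(a)-b\|_\vp^\#$ is continuous for every model and every pair, which is precisely the statement that $t\mapsto\vp_t$ is continuous for the logic topology on $T_{W^*}$-definable predicates (the topology of pointwise convergence on realizations in models; see \cite{munster}).
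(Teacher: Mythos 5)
Your proposal is correct and follows the same route the paper intends: the paper offers no written proof beyond declaring the corollary "an immediate consequence" of the preceding Fact via Beth's definability theorem, and your argument is exactly that deduction with the details (preservation of the sorts by $\sigma_t$, closure of the expanded class under ultraproducts and ultraroots) filled in, several of which the paper itself only verifies later in Section 7. The one point worth tightening is the "moreover" clause: you verify only pointwise continuity of $t\mapsto\vp_t$, whereas the natural (uniform) reading of the logic topology requires $\sup_{a\in S_1}\|\sigma^\vp_u(a)-a\|_\vp^\#\to 0$ uniformly over all models as $u\to 0$; this does hold, since for totally $1$-bounded $a$ one has $\|\Delta^{\pm 1/2}a\om\|\leq 1$, giving a uniform tail bound on the spectral measure of $\log\Delta$ at $a\om$ and hence a model-independent modulus.
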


By the previous corollary, we can add symbols for the modular automorphism group and know that there is a definitional expansion $T_{W^*-\md}$ of $T_{W^*}$ in this larger language that captures the fact that these new symbols are indeed interpreted as the modular automorphism group.  We now proceed to show how we can give concrete, effective axioms for $T_{W^*-\md}$.

First we show that $\Delta_\vp^{it}$ preserves our sorts.  We recall that if $a\in M_{\tb}$, then $\pi_{\vp}'(a)\in M'$, whence $J_\vp\pi_{\vp}'(a)J_\vp\in M$ by Fact \ref{TT}.

\begin{lem}\label{modlemma}
Suppose that $a\in M_{\tb}$.  Then $\|J_\vp\pi_{\vp}'(a)J_\vp\|\leq \|a\|_{\rt}$.  
\end{lem}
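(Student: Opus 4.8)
The plan is to observe that the claimed inequality is in fact an equality and follows purely from the isometric nature of the modular conjugation, so there is essentially nothing to do beyond unwinding definitions.

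First I would recall from Facts~\ref{RvDfacts}(2) that $J_\vp$ is an anti-linear, self-adjoint, orthogonal operator with $J_\vp^2 = 1$; in particular $J_\vp$ is a (conjugate-linear) isometric bijection of $H_\vp$ onto itself, so $\|J_\vp\xi\| = \|\xi\|$ for every $\xi \in H_\vp$. Next I would record the elementary fact that conjugation by an isometric bijection preserves the operator norm on $\mathcal B(H_\vp)$: for any $b \in \mathcal B(H_\vp)$ and any unit vector $\xi \in H_\vp$ we have $\|J_\vp b J_\vp \xi\| = \|b J_\vp \xi\| \le \|b\| \, \|J_\vp\xi\| = \|b\|$, and since $J_\vp$ is surjective the vectors $J_\vp\xi$ exhaust the unit ball of $H_\vp$, whence $\|J_\vp b J_\vp\| = \|b\|$.

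Finally I would apply this with $b := \pi_\vp'(a)$, which is a bounded operator on $H_\vp$ precisely because $a \in M_{\tb}$ is right bounded, and recall that by the very definition of the right norm we have $\|\pi_\vp'(a)\| = \|a\|_{\rt}$. This gives $\|J_\vp \pi_\vp'(a) J_\vp\| = \|\pi_\vp'(a)\| = \|a\|_{\rt}$, which in particular is $\le \|a\|_{\rt}$. (That $J_\vp \pi_\vp'(a) J_\vp$ actually lies in $M$, rather than merely in $\mathcal B(H_\vp)$, is exactly the content of the remark immediately preceding the lemma, obtained from Theorem~\ref{TT} together with $\pi_\vp'(a) \in M'$.)

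I do not expect any genuine obstacle here: the single point being used is the isometric — indeed anti-unitary — character of $J_\vp$, which the Rieffel–Van Daele framework summarized in Section~\ref{SectionRvD} has already made available. The only mild care needed is to keep track of the fact that $J_\vp$ is conjugate-linear, but this is irrelevant for the norm computation since anti-unitaries are isometric just as unitaries are.
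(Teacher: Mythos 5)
Your proof is correct and rests on the same single fact as the paper's — that $J_\vp$ is an isometric involution — though you apply it globally (conjugation by an anti-unitary preserves operator norm), whereas the paper verifies the bound only on the dense subspace $M'\om$ via the identity $J_\vp(b\om)=(J_\vp bJ_\vp)\om$ for $b\in M'$. Your version is, if anything, slightly more streamlined and even yields equality of norms; no gap.
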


\begin{proof}
    Fix $b\in M'$ and set $d:=J_\vp bJ_\vp\in M$.  Then $J_\vp(b\omega)=d\omega$ and so $$\|(J_\vp\pi_{\vp}'(a)J_\vp)(b\omega)\|_\vp=\|\pi_{\vp}'(a)d\omega\|_\vp\leq \|a\|_{\rt}\|d\omega\|_\vp=\|a\|_{\rt}\|b\omega\|_\vp$$ since $J_\vp$ is an isometry.
\end{proof}

\begin{prop}\label{Deltapreserves}
Suppose that $a\in M_{\tb}$.  Then for all $t\in \mathbb R$, we have $\Delta_\vp^{it}(a\omega)=b\omega$ for unique $b\in M_{\tb}$ with $\|b\|_{\rt}= \|a\|_{\rt}$.
\end{prop}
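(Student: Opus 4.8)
The statement asserts that $\Delta_\vp^{it}$ preserves $M_{\tb}$ and preserves the right norm exactly. The plan is to combine Fact \ref{515} with Fact \ref{514} and Lemma \ref{modlemma}. First I would use Fact \ref{515}: for $a \in M_{\tb} \subseteq M$, there is a unique $b \in M$ with $\Delta_\vp^{it}(a\om) = b\om$ and $\|b\| = \|a\|$, and moreover $\Delta_\vp^{it}(a^*\om) = b^*\om$ by the remark following that fact. So it remains only to control the right norms of $b$ and $b^*$ and to show they equal those of $a$ and $a^*$. Since $a^*$ is also totally bounded and $\Delta_\vp^{it}$ sends $a^*\om$ to $b^*\om$, it suffices to bound $\|b\|_{\rt}$ by $\|a\|_{\rt}$; applying the same argument to $a^*$ gives $\|b^*\|_{\rt} \leq \|a^*\|_{\rt}$, and then running the argument for $\Delta_\vp^{-it} = (\Delta_\vp^{it})^{-1}$ (which sends $b\om$ back to $a\om$) yields the reverse inequalities, giving equality on both the nose.

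For the right-norm bound I would compute $\|d b \om\|_\vp$ for arbitrary $d \in M$ and try to relate it to $\|d\om\|_\vp$ via the operator $\Delta_\vp^{it}$, which is unitary on $H_\vp$. The natural move is to use Fact \ref{514}: taking the right-multiplication operator $\pi_\vp'(a) \in M'$ (which exists since $a$ is right bounded), the operator $\Delta_\vp^{it}J_\vp\pi_\vp(c)J_\vp\Delta_\vp^{-it}$ acts correctly on $b\om$ for any $c$. Actually the cleaner route: since $a \in M_{\tb}$ we have $\pi_\vp'(a) \in M'$, and by Tomita--Takesaki $\Delta_\vp^{it}\pi_\vp'(a)\Delta_\vp^{-it} = \pi_\vp'(a')$ for some $a' \in M'$ with the same operator norm $\|a'\| = \|\pi_\vp'(a)\| = \|a\|_{\rt}$. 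I would check that $a'\om = \Delta_\vp^{it}\pi_\vp'(a)\Delta_\vp^{-it}\om = \Delta_\vp^{it}\pi_\vp'(a)\om = \Delta_\vp^{it}(a\om) = b\om$, using $\Delta_\vp^{-it}\om = \om$. Thus $b\om = a'\om$ with $a' \in M'$, so by Lemma \ref{boundedlemma} $b$ is right bounded with $\pi_\vp'(b) = a'$, hence $\|b\|_{\rt} = \|a'\| = \|a\|_{\rt}$. The same reasoning with $a^*$ (and the remark to Fact \ref{515}) gives $\|b^*\|_{\rt} = \|a^*\|_{\rt}$, so $b \in M_{\tb}$.

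I would then observe that equality of right norms, rather than just an inequality, comes out directly from the computation above (the modular group conjugates $M'$ onto $M'$ via a unitary, preserving operator norms), so no separate application of $\Delta_\vp^{-it}$ is actually needed — though that would also work. The main obstacle, such as it is, is the bookkeeping of identifying $b\om$ with $a'\om$ where $a' = \Delta_\vp^{it}\pi_\vp'(a)\Delta_\vp^{-it}$ lies in $M'$: one must be careful that the Tomita--Takesaki relation $\Delta^{it}M'\Delta^{-it} = M'$ (equivalently $\Delta^{it}M\Delta^{-it}=M$ applied to the commutant) is being invoked correctly and that $\om$ is fixed by $\Delta_\vp^{it}$, which follows since $\Delta^{it}\om = \om$ for the GNS vector. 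Everything else is a routine chain of norm identities using that $J_\vp$ and $\Delta_\vp^{it}$ are isometries.
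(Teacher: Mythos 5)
Your argument is correct and ultimately rests on the same key identification as the paper's proof---namely that $\Delta_\vp^{it}\pi_{\vp}'(a)\Delta_\vp^{-it}$ is exactly the right-multiplication operator attached to $b$---but you reach it by a genuinely different route. The paper passes through $J_\vp$: it writes $J_\vp\pi_{\vp}'(a)J_\vp=\pi_\vp(c)$ with $\|c\|\leq\|a\|_{\rt}$ (Lemma \ref{modlemma}), applies Fact \ref{514} to compute $(\Delta_\vp^{it}J_\vp\pi_\vp(c)J_\vp\Delta_\vp^{-it})(d\om)=\pi_\vp(d)(b\om)$ for all $d\in M$, concludes $\|b\|_{\rt}\leq\|a\|_{\rt}$, and only then obtains equality by running the argument backwards with $\Delta_\vp^{-it}$. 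You instead invoke $\Delta_\vp^{it}M'\Delta_\vp^{-it}=M'$ (which does follow from Theorem \ref{TT} by taking commutants, since $\Delta_\vp^{it}$ is unitary) together with $\Delta_\vp^{it}\om=\om$, and then Lemma \ref{boundedlemma} identifies $\pi_\vp'(b)$ as the unitary conjugate of $\pi_\vp'(a)$, giving $\|b\|_{\rt}=\|a\|_{\rt}$ in a single step; this is arguably cleaner, and the passage to $b^*$ via the remark after Fact \ref{515} is handled the same way in both arguments. The one ingredient you use that the paper never states explicitly is $\Delta_\vp^{it}\om=\om$; it is standard and easy to supply (for instance $S_\vp\om=\om$ and $F_\vp\om=\om$ give $\Delta\om=F_\vp S_\vp\om=\om$, or in the Rieffel--Van Daele picture $P\om=\om$ and $Q\om=0$ give $R\om=\om$, hence $\Delta_\vp^{it}\om=(2-R)^{it}R^{-it}\om=\om$), but you should record it, along with the one-line observation that uniqueness of $b$ is just the separating property of $\om$. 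With those two small additions your proof is complete.
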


\begin{proof}
    By Fact \ref{515}, we know that $\Delta_\vp^{it}(a\omega)=b\omega$ for a unique $b\in M$.  Since $a\in M_{\tb}$, we have that $J_\vp \pi_{\vp}'(a)J_\vp=\pi_{\vp}(c)$ for some $c\in M$.  By Lemma \ref{modlemma} above, we have that $\|c\|\leq \|a\|_{\rt}$.  It follows that $J_\vp a\omega=J_\vp\pi_{\vp}'(a)J_\vp\omega=c\omega$.    Consequently, by Fact \ref{515} above, for any $d\in M$, we have
    $$(\Delta^{it}_\vp J_\vp\pi_{\vp}(c)J_\vp\Delta_\vp^{-it})(d\omega)=\pi_{\vp}(d)(\Delta^{it}_\vp(J_\vp c\omega))=\pi_{\vp}(d)(b\omega).$$
Taking $\|\cdot\|_\vp$ and using that $\Delta^{it}$ and $J$ are isometries, we have
$$\|\pi_{\vp}(d)(b\omega)\|_\vp\leq \|\pi_{\vp}(c)\|\|d\omega\|_\vp\leq \|a\|_{\rt}\|d\omega\|_\vp.$$
It follows that $\|b\|_{\rt}\leq \|a\|_{\rt}$.  By applying $\Delta^{-it}$, we achieve equality.
\end{proof}

The previous proposition and the remarks after Fact \ref{515} immediately imply:

\begin{cor}
If $a\in S_n(M)$, then for all $t\in \mathbb{R}$, we have $\Delta^{it}(a\omega)=b\omega$ for a unique $b\in S_n(M)$.
\end{cor}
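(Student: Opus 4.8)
The plan is simply to unwind the definition of $S_n(M)$ and apply Proposition \ref{Deltapreserves} twice, once to $a$ and once to $a^*$. Recall that $a\in S_n(M)$ means exactly that $\|a\|\leq n$ and that both $a$ and $a^*$ are right $n$-bounded, i.e.\ $\|a\|_{\rt}\leq n$ and $\|a^*\|_{\rt}\leq n$; in particular $a\in M_{\tb}$.

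First I would invoke Fact \ref{515} to obtain the unique $b\in M$ with $\Delta_\vp^{it}(a\omega)=b\omega$, which already yields $\|b\|=\|a\|\leq n$ via the conjugation formula $\pi_\vp(b)=\Delta_\vp^{it}\pi_\vp(a)\Delta_\vp^{-it}$. Next, since $a\in M_{\tb}$, Proposition \ref{Deltapreserves} applies directly and gives $\|b\|_{\rt}=\|a\|_{\rt}\leq n$. It remains to bound $\|b^*\|_{\rt}$, and for this I would use the remark immediately following Fact \ref{515}, namely that taking adjoints gives $\Delta_\vp^{it}(a^*\omega)=b^*\omega$; applying Proposition \ref{Deltapreserves} to $a^*\in M_{\tb}$ then yields $\|b^*\|_{\rt}=\|a^*\|_{\rt}\leq n$. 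Combining the three estimates shows $b\in S_n(M)$, and uniqueness of $b$ is already part of the statement of Fact \ref{515}.

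There is essentially no obstacle here: all of the work has been packaged into Proposition \ref{Deltapreserves} and Fact \ref{515}. The only point worth stressing is that the right-norm bound in Proposition \ref{Deltapreserves} is an \emph{equality}, so that membership in $S_n$ is preserved with no loss of constant, together with the symmetry of the whole setup under passing to adjoints.
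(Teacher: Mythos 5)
Your proof is correct and is exactly the argument the paper intends: the corollary is stated as an immediate consequence of Proposition \ref{Deltapreserves} together with the remark following Fact \ref{515}, which is precisely the combination you spell out (operator norm from Fact \ref{515}, right norm from the proposition, and the adjoint bound by applying the proposition to $a^*$ after identifying $\Delta^{it}(a^*\omega)=b^*\omega$). No difference in approach.
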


Let $X\subseteq (0,2)$ denote the spectrum of $R$.  For $t\in \mathbb R$, we let $f_t:X\to \mathbb C$ be the function defined by $f_t(x)=x^{it}$.  Take polynomial functions $f_{t,n}$ on $X$ with coefficients from $\mathbb Q(i)$ such that $\|f_t-f_{t,n}\|_\infty<\frac{1}{n}$.  Moreover, these polynomials can be found effectively in the sense that the map which upon $(t,n)$ returns the coefficients of $f_{t,n}$ from $\mathbb N^2$ to $\mathbb Q(i)^{<\omega}$ is a computable map.  

The following lemma is straightforward from the definition of $\Delta^{it}$:

\begin{lem}
For all $m,n\geq 1$, we have
$$\|\Delta^{it}-f_{t,m}(2-R)f_{-t,n}(R)\|<\frac{1}{m}\|f_{-t}\|_\infty+\frac{1}{n}\|f_{t,m}\|_\infty.$$
\end{lem}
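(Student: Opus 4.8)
The plan is to reduce the estimate to the commutative functional calculus of the single positive operator $R$ and then telescope. First I would observe that, since $2-R$ is an affine (hence polynomial) function of $R$, all four operators appearing in the lemma --- namely $f_t(2-R)=(2-R)^{it}$, $f_{-t}(R)=R^{-it}$, $f_{t,m}(2-R)$, and $f_{-t,n}(R)$ --- lie in the abelian von Neumann algebra generated by $R$. Identifying this algebra with $L^\infty$ of a scalar spectral measure for $R$, the operator norm of a bounded continuous function $g$ of $R$ (resp.\ of $2-R$) equals the supremum of $|g|$ over $\sigma(R)$ (resp.\ over $\sigma(2-R)=2-\sigma(R)$ by the spectral mapping theorem). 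Both spectra are compact subsets of $(0,2)$, as recalled just before the definition of $\Delta^{it}$ in Section \ref{SectionRvD}, so I am free to take $X$ to be a compact subinterval of $(0,2)$, symmetric about $1$, containing both of them; this guarantees that $\|\cdot\|_\infty$ (the supremum over $X$) dominates every relevant supremum whether the function is evaluated at $R$ or at $2-R$.

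The key step is the telescoping identity. Writing $\Delta^{it}=f_t(2-R)f_{-t}(R)$ and inserting and removing the term $f_{t,m}(2-R)f_{-t}(R)$, one gets
\[
\Delta^{it}-f_{t,m}(2-R)f_{-t,n}(R)=\bigl(f_t(2-R)-f_{t,m}(2-R)\bigr)f_{-t}(R)+f_{t,m}(2-R)\bigl(f_{-t}(R)-f_{-t,n}(R)\bigr).
\]
Then I would take norms, apply submultiplicativity and the triangle inequality, and bound the four factors by the functional-calculus estimates $\|f_t(2-R)-f_{t,m}(2-R)\|\le\|f_t-f_{t,m}\|_\infty<\tfrac1m$, $\|f_{-t}(R)\|\le\|f_{-t}\|_\infty$, $\|f_{t,m}(2-R)\|\le\|f_{t,m}\|_\infty$, and $\|f_{-t}(R)-f_{-t,n}(R)\|\le\|f_{-t}-f_{-t,n}\|_\infty<\tfrac1n$ (the last one being the defining property of $f_{-t,n}$, i.e.\ the hypothesis applied with $-t$ in place of $t$). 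Multiplying out gives precisely $\|\Delta^{it}-f_{t,m}(2-R)f_{-t,n}(R)\|<\tfrac1m\|f_{-t}\|_\infty+\tfrac1n\|f_{t,m}\|_\infty$; the inequality stays strict because $\|f_{-t}\|_\infty$ and $\|f_{t,m}\|_\infty$ are both positive.

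There is no genuine obstacle here --- the statement is essentially a bookkeeping exercise in the continuous functional calculus, once $\Delta^{it}$ has been written in the factored form $(2-R)^{it}R^{-it}$. The one point that deserves a moment's attention is keeping straight whether each sup-norm is taken over $\sigma(R)$ or over $\sigma(2-R)$, which is exactly why I would arrange at the outset for $X$ to be symmetric about $1$; with that in hand the computation is immediate.
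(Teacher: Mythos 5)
Your proof is correct and is precisely the "straightforward" argument the paper omits: factor $\Delta^{it}=f_t(2-R)f_{-t}(R)$, telescope, and apply the continuous functional calculus bound $\|g(A)\|\le\sup_{\sigma(A)}|g|$ to each factor. Your extra care in enlarging $X$ to a compact set symmetric about $1$ containing both $\sigma(R)$ and $\sigma(2-R)$ is a worthwhile tightening of the paper's setup (which defines $X$ only as $\sigma(R)$), since the approximation bound for $f_{t,m}$ must be valid on the spectrum of $2-R$.
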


Denoting the quantity on the right hand side of the inequality appearing in the previous lemma by $\delta_{t,m,n}$, we note that the map $(t,m,n)\mapsto \delta_{t,m,n}$ is computable.  We also set $q_{t,n}\in \mathbb N$ to an integer such that $(f_{-t,n}(R_1))(S_1)\subseteq S_{q_{t,n}}$.  For each $t\in \mathbb{Q}$, we add a symbol $\mathbf{\Delta}^{it}:S_1\to S_1$ to the language and continue our enumeration of $T_{W^*-\md}$ by adding the following axioms to our theory:

\begin{enumerate}
\setcounter{enumi}{\value{axiomlist}}
\item $\sup_{x\in S_1}[\|\mathbf{\Delta}^{it}(x)-f_{t,m}(2-\vR_{q_{t,n}})(f_{-t,n}(\vR_1)(x)))\|_\vp\dminus \delta_{t,m,n}].$
\end{enumerate}

 We note that the above description of the axioms in (12) is a bit sloppy as the term involving the $\vR$'s take values in some sort $S_p$ with $p$ predictably depending on $t$, $m$, and $n$, whence we should technically be plugging $\Delta^{it}(x)$ into an appropriate inclusion mapping.

 One last lemma before we can complete our axiomatization; the proof follows immediately from Proposition \ref{Deltapreserves}.

\begin{lem}
For each $a\in S_n(M)$ and $t\in \mathbb{R}$, we have $\sigma^\vp_t(a)\in S_n$.
\end{lem}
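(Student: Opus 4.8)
The final statement is the innocuous-looking lemma:

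\begin{quote}
For each $a\in S_n(M)$ and $t\in \mathbb{R}$, we have $\sigma^\vp_t(a)\in S_n$.
\end{quote}

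The plan is to unwind the definitions and invoke Proposition~\ref{Deltapreserves} directly. Recall that $\sigma_t^\vp(a) = \Delta_\vp^{it}\,\pi_\vp(a)\,\Delta_\vp^{-it}$, an element of $M$ by the main theorem of Tomita--Takesaki theory (Theorem~\ref{TT}). First I would note that membership in the sort $S_n$ means exactly that $\sigma_t^\vp(a)$ is totally $n$-bounded, i.e.\ $\|\sigma_t^\vp(a)\|\leq n$, and both $\sigma_t^\vp(a)$ and its adjoint are right $n$-bounded.

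For the operator-norm bound, I would observe that $\sigma_t^\vp$ is a $*$-automorphism of $M$, hence isometric for the operator norm, so $\|\sigma_t^\vp(a)\| = \|a\|\leq n$. For the right-norm bounds, the key point is that the vector $\sigma_t^\vp(a)\om$ coincides with $\Delta_\vp^{it}(a\om)$: indeed $\sigma_t^\vp(a)\om = \Delta_\vp^{it}\pi_\vp(a)\Delta_\vp^{-it}\om = \Delta_\vp^{it}\pi_\vp(a)\om = \Delta_\vp^{it}(a\om)$, using that $\Delta_\vp^{-it}\om = \om$ (since $\Delta_\vp\om = \om$, as $\om$ is fixed by the modular group). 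Now Proposition~\ref{Deltapreserves} applies to $a\in M_{\tb}$: it gives a unique $b\in M_{\tb}$ with $\Delta_\vp^{it}(a\om) = b\om$ and $\|b\|_{\rt} = \|a\|_{\rt}$. Since $\om$ is separating, $b = \sigma_t^\vp(a)$, so $\|\sigma_t^\vp(a)\|_{\rt} = \|a\|_{\rt}\leq n$. Applying the same reasoning to $a^*$ (noting $\sigma_t^\vp(a)^* = \sigma_t^\vp(a^*)$, as the modular group commutes with the $*$-operation) yields $\|\sigma_t^\vp(a)^*\|_{\rt} = \|a^*\|_{\rt}\leq n$. Hence $\sigma_t^\vp(a)$ is totally $n$-bounded, i.e.\ $\sigma_t^\vp(a)\in S_n(M)$.

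There is essentially no obstacle here: the entire content has been front-loaded into Proposition~\ref{Deltapreserves}, and this lemma is just the bookkeeping observation that the right-norm equality there, together with operator-norm invariance of the automorphism, exactly matches the definition of the sort $S_n$. The only minor subtlety worth spelling out is the identification $\sigma_t^\vp(a)\om = \Delta_\vp^{it}(a\om)$ via $\Delta_\vp^{it}\om = \om$, which is where the statement connects to the bounded-operator framework of Section~\ref{SectionRvD}. This is precisely why the paper can simply say ``the proof follows immediately from Proposition~\ref{Deltapreserves}.''
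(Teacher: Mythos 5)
Your proof is correct and is exactly the argument the paper intends when it says the lemma ``follows immediately from Proposition \ref{Deltapreserves}'': you identify $\sigma_t^\vp(a)\om=\Delta_\vp^{it}(a\om)$ via $\Delta_\vp^{it}\om=\om$, invoke the proposition for the right norms of $\sigma_t^\vp(a)$ and $\sigma_t^\vp(a)^*=\sigma_t^\vp(a^*)$, and use that the automorphism is isometric for the operator norm. No issues.
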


We are finally ready to complete the axiomatization $T_{W^*-\md}$.  We add function symbols $\mathbf{\sigma}_{t}$ to the language and add the following axioms:
\begin{enumerate}[resume]
    \item $\sup_{a,x\in S_1} d_1(\mathbf{\sigma}_{t}(a)x,\mathbf{\mathbf{\Delta}}^{it}(a\cdot \mathbf{\Delta}^{-it}(x)))$.
\end{enumerate}

We have now described a language $\cal L_{W^*-\md}$ extending the language $\cal L$ and an \emph{effective} $\cal L_{W^*-\md}$-theory $T_{W^*-\md}$ extending $T_{W^*}$ for which the following theorem holds:

\begin{thm}
The category of models of $T_{W^*-\md}$ consists of the dissections of \wstar-probability spaces with the symbols $\mathbf{\Delta^{it}}$ interpreted as in Section \ref{SectionRvD} and with the symbols $\mathbf{\sigma}_t$ interpreted as the modular automorphism group of the state (restricted to the sort $S_1$).
\end{thm}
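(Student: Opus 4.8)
The result is a definitional expansion of Theorem~\ref{EquivalenceOfModels}, so the plan is to verify that the effective axioms added in this section force exactly the intended interpretations of $\mathbf{\Delta}^{it}$ and $\mathbf{\sigma}_t$. First I would check that for any \wstar-probability space $(M,\vp)$, the dissection $\cal D(M,\vp)$---with $\mathbf{\Delta}^{it}$ interpreted as the map sending $a\in S_1(M)$ to the unique $b$ with $b\omega=\Delta_\vp^{it}(a\omega)$ (which lands back in $S_1$ by the corollary to Proposition~\ref{Deltapreserves}), and $\mathbf{\sigma}_t$ interpreted as $\sigma_t^\vp$ restricted to $S_1$---is a model of $T_{W^*-\md}$. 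The axioms in (12) reduce to the identity $\Delta_\vp^{it}=(2-R)^{it}R^{-it}$ of Section~\ref{SectionRvD} together with the operator-norm estimate $\|\Delta_\vp^{it}-f_{t,m}(2-R)f_{-t,n}(R)\|\le\delta_{t,m,n}$ established above, transported from $B(H_\vp)$ to the $\|\cdot\|_\vp$-norm on $M$ via $\|c\|_\vp=\|c\omega\|$. The axiom for $\mathbf{\sigma}_t$ encodes the operator identity $\pi_\vp(\sigma_t^\vp(a))=\Delta_\vp^{it}\pi_\vp(a)\Delta_\vp^{-it}$: for $a,x\in S_1$ one computes $(\sigma_t^\vp(a)x)\omega=\Delta_\vp^{it}\pi_\vp(a)\Delta_\vp^{-it}(x\omega)=\Delta_\vp^{it}\bigl(\pi_\vp(a)(\mathbf{\Delta}^{-it}(x)\omega)\bigr)=\bigl(\mathbf{\Delta}^{it}(a\cdot\mathbf{\Delta}^{-it}(x))\bigr)\omega$, whence the two algebra elements agree by faithfulness of $\vp$; the typing of all terms is legitimate by Proposition~\ref{Deltapreserves} and the last lemma above.

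Conversely, suppose $A\models T_{W^*-\md}$. Its $\cal L_{W^*}$-reduct models $T_{W^*}$, so by Theorem~\ref{EquivalenceOfModels} it equals $\cal D(M,\vp)$ for a \wstar-probability space $(M,\vp)$, unique up to isomorphism; it remains to identify the interpretations of the new symbols in $A$. For $\mathbf{\Delta}^{it}$: the axioms in (12) give $\|\mathbf{\Delta}^{it}(x)-f_{t,m}(2-R)f_{-t,n}(R)(x)\|_\vp\le\delta_{t,m,n}$ for every $x\in S_1$ and all $m,n\ge1$; since $f_{t,m}(2-R)f_{-t,n}(R)\to\Delta_\vp^{it}$ in operator norm and $\delta_{t,m,n}\to0$, letting $m,n\to\infty$ forces $\mathbf{\Delta}^{it}(x)\omega=\Delta_\vp^{it}(x\omega)$, and the uniqueness clause of Fact~\ref{515} then identifies $\mathbf{\Delta}^{it}(x)$ with the intended algebra element. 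For $\mathbf{\sigma}_t$: evaluating its axiom at $x=1_1$ and using $\Delta_\vp^{it}\omega=\omega$ (so $\mathbf{\Delta}^{-it}(1_1)=1_1$ in $A$), we obtain $\mathbf{\sigma}_t(a)=\mathbf{\Delta}^{it}(a\cdot1_1)=\mathbf{\Delta}^{it}(a)=\sigma_t^\vp(a)$ for all $a\in S_1$.

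For the statement about the category, note that the new symbols are $T_{W^*}$-definable---$\mathbf{\Delta}^{it}$ as the uniform limit of the $\cal L_{W^*}$-terms appearing in (12) (with explicit modulus $\delta_{t,m,n}$), and $\mathbf{\sigma}_t$ by the corollary above obtained from Beth's theorem---so every $\cal L_{W^*}$-embedding between models of $T_{W^*}$ preserves them and thus extends uniquely to an $\cal L_{W^*-\md}$-embedding; hence the reduct functor is an equivalence between the category of models of $T_{W^*-\md}$ and the category of models of $T_{W^*}$, and composing with Theorem~\ref{EquivalenceOfModels} (together with Lemma~\ref{expectation} identifying these embeddings with embeddings of \wstar-probability spaces) gives the claim. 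The one genuine source of friction is the bookkeeping already flagged in the text: the displayed axioms suppress the inclusion maps $\iota_{m,n}$ and the precise sorts of the $\vR$-valued terms, so the write-up must reinstate these harmlessly; the limiting argument $m,n\to\infty$ causes no trouble since the bounds $\delta_{t,m,n}$ are given by an explicit computable formula tending to $0$.
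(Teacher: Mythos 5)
Your proposal is correct and follows essentially the same route as the paper, which leaves this theorem's proof implicit as the assembly of the section's ingredients (Proposition~\ref{Deltapreserves} and its corollary, the $f_{t,m}$ approximation lemma with the computable bounds $\delta_{t,m,n}\to 0$, Fact~\ref{515}, and the Beth-definability corollary guaranteeing the expansion is definitional). Your two-directional verification and the reduction of the categorical claim to Theorem~\ref{EquivalenceOfModels} via the reduct functor are exactly the intended argument.
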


\section{Axiomatizable and local classes}\label{SectionLocal}

In this section, we determine whether or not natural classes of \wstar-probability spaces are axiomatizable or local (defined below).  A similar endeavour for the class of tracial von Neumann algebras was undertaken in \cite{FHS3}.

We begin with a series of negative results; these results were also mentioned by Dabrowski in \cite[Section 2.3]{Dab} in his language, but we offer a few details here for the convenience of the reader.

The following is \cite[Proposition 6.3]{AH}:
\begin{fact}
There is a family $\vp_n$ of faithful normal states on the hyperfinite II$_1$ factor $\mathcal{R}$ such that $\prod_{\cU}(\mathcal{R},\vp_n)$ is \emph{not} semifinite.
\end{fact}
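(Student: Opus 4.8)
The statement is \cite[Proposition 6.3]{AH}, and in view of Theorem \ref{ocneanutheorem} (which identifies the Ocneanu ultraproduct with the model-theoretic one) it transfers directly to the language of this paper; here is the plan for the argument behind it. Recall that a factor is semifinite precisely when it is of type I or II, equivalently when its Connes $S$-invariant is $\{1\}$. So it suffices to exhibit faithful normal states $\vp_n$ on $\mathcal R$ for which $N:=\prod_\cU(\mathcal R,\vp_n)$ is a factor whose modular spectrum is nontrivial. The idea is to load all of the modular structure of $\vp_n$ onto a matrix block of growing size: write $\mathcal R\cong \mathcal R\,\overline{\otimes}\,M_{k_n}(\mathbb C)$ and put $\vp_n:=\tau\otimes\psi_n$, where $\tau$ is the trace on the first leg and $\psi_n$ is a faithful state on $M_{k_n}(\mathbb C)$ whose eigenvalue list is the $n$-th truncation of the eigenvalue list of a fixed Powers (ITPFI) factor of type III$_\lambda$, $0<\lambda<1$, chosen as in \cite[Section 6]{AH}.

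First I would record that the Ocneanu ultraproduct $\prod_\cU(M_{k_n}(\mathbb C),\psi_n)$ is, with these eigenvalue lists, the hyperfinite type III$_\lambda$ factor (the matrix/ITPFI case analyzed by Ando and Haagerup), and that it sits inside $N$ with a $\vp^\cU$-preserving conditional expectation, so that by Lemma \ref{expectation} it is a genuine W$^*$-subalgebra of $(N,\vp^\cU)$. Next I would compute the modular automorphism group of $\vp^\cU=\lim_\cU\vp_n$: since $\vp_n$ is a product state with a trace on the first leg, $\sigma_t^{\vp_n}=\id\otimes\sigma_t^{\psi_n}$, and by the compatibility of the modular flow with the Ocneanu ultraproduct (the Fact opening Section \ref{SectionExpanding}) together with the spectral-subspace description of $\cal M(M_i,\vp_i)$ in Facts \ref{modularsubspaces}, one gets $\sigma_t^{\vp^\cU}=\id\otimes\sigma_t^{\psi^\cU}$ acting on $\prod_\cU(\mathcal R,\tau)\,\overline{\otimes}\,\prod_\cU(M_{k_n},\psi_n)$. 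From this the point spectrum of $\Delta_{\vp^\cU}$ contains $\lambda^{\mathbb Z}$, the $S$-invariant of $N$ is $\{0\}\cup\lambda^{\mathbb Z}$, and $N$ is a factor (a tensor product of factors is a factor); hence $N$ is of type III$_\lambda$, in particular not semifinite.

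The main obstacle is the bookkeeping around the interaction of the Ocneanu ultraproduct with tensor products: one must verify that $\prod_\cU(\mathcal R\,\overline{\otimes}\,M_{k_n},\tau\otimes\psi_n)$ really contains $\prod_\cU(\mathcal R,\tau)\,\overline{\otimes}\,\prod_\cU(M_{k_n},\psi_n)$ with expectation, and that the modular flow descends as claimed — this is exactly where the analysis of which norm-bounded sequences lie in $\cal M(M_i,\vp_i)$ and how $\sigma_t$ acts on the corresponding spectral subspaces (Facts \ref{modularsubspaces}, following \cite{AH}) is doing the real work. The cleanest route, and the one I would ultimately adopt here, is simply to quote \cite[Proposition 6.3]{AH} for the existence of such $\vp_n$ and then invoke Theorem \ref{ocneanutheorem} to conclude that $\mathcal D\bigl(\prod_\cU(\mathcal R,\vp_n)\bigr)\cong\prod_\cU\mathcal D(\mathcal R,\vp_n)$ is not the dissection of a semifinite W$^*$-probability space, so that no additional work is needed to import the conclusion into our framework.
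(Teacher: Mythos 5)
Your proposal is correct and, in its final form, coincides with what the paper does: the Fact is stated without proof as a direct citation of \cite[Proposition 6.3]{AH}, with Theorem \ref{ocneanutheorem} supplying the transfer to the model-theoretic ultraproduct. The additional sketch of the Powers-state construction behind the Ando--Haagerup result is a reasonable outline but is not needed for (and not given in) the paper.
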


In what follows, given a property $P$ of von Neumann algebras, we abuse terminology and speak of the class of algebras satisfying $P$ when referring to the class of pairs \wstar-probability spaces $(M,\vp)$ where $M$ satisfies property $P$.  We also say that the class of algebras satisfying $P$ is axiomatizable if the class of dissections of pairs $(M,\vp)$ with $M$ satisfying $P$ is an axiomatizable class.

\begin{cor}
The following classes are not elementary classes:  finite algebras, semifinite algebras, II$_1$ factors.
\end{cor}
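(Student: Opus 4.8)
The plan is to derive all three non-axiomatizability statements from the single fact just quoted, that there exists a family $\vp_n$ of faithful normal states on the hyperfinite $\mathrm{II}_1$ factor $\mathcal{R}$ for which the Ocneanu ultraproduct $\prod_\cU(\mathcal{R},\vp_n)$ fails to be semifinite. First I would recall the general model-theoretic criterion: a class $\cal C$ of $\cal L_{W^*}$-structures is elementary (axiomatizable) if and only if it is closed under isomorphism, ultraproducts, and ultraroots. Hence, to show a class $\cal C$ is \emph{not} elementary, it suffices to exhibit a family of members of $\cal C$ whose ultraproduct is not in $\cal C$. By Theorem \ref{ocneanutheorem}, the model-theoretic ultraproduct $\prod_\cU \cal D(\mathcal{R},\vp_n)$ is exactly $\cal D(\prod^\cU(\mathcal{R},\vp_n))$, so statements about the model-theoretic ultraproduct translate directly into statements about the Ocneanu ultraproduct of the underlying \wstar-probability spaces.

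The key steps, in order, are as follows. First, the class of finite algebras: each $(\mathcal{R},\vp_n)$ is finite (indeed $\mathcal{R}$ is a $\mathrm{II}_1$ factor), so $\cal D(\mathcal{R},\vp_n)$ lies in the class of dissections of finite algebras; but a finite von Neumann algebra is in particular semifinite, so if $\prod^\cU(\mathcal{R},\vp_n)$ were finite it would be semifinite, contradicting the Fact. Hence the class of finite algebras is not closed under ultraproducts, so not elementary. Second, the class of semifinite algebras: the same family works verbatim — each $(\mathcal{R},\vp_n)$ is semifinite and the ultraproduct is not — so this class is not closed under ultraproducts either. Third, the class of $\mathrm{II}_1$ factors: each $\mathcal{R}$ is a $\mathrm{II}_1$ factor, and again $\prod^\cU(\mathcal{R},\vp_n)$ is not semifinite, hence certainly not a $\mathrm{II}_1$ factor (every $\mathrm{II}_1$ factor is finite, a fortiori semifinite), so this class is not closed under ultraproducts and thus not elementary.

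The only genuinely substantive input is the Fact of Ando–Haagerup, which is being assumed; the remaining argument is the bookkeeping of translating ``not elementary'' into ``not closed under ultraproducts'' and the trivial implications ``$\mathrm{II}_1$ factor $\Rightarrow$ finite $\Rightarrow$ semifinite.'' Consequently there is no real obstacle; the one point requiring a sentence of care is the passage through Theorem \ref{ocneanutheorem}, i.e. making sure that the classes in question are defined (as stated in the excerpt) as classes of \emph{dissections} of pairs $(M,\vp)$ with $M$ of the relevant type, so that non-closure of the class of \wstar-probability spaces under the Ocneanu ultraproduct is literally the same as non-closure of the class of dissections under the model-theoretic ultraproduct. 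One might also remark that these classes are trivially closed under isomorphism and that the failure is entirely at the level of ultraproducts, so nothing about ultraroots needs to be examined.
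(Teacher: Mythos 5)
Your proposal is correct and is exactly the argument the paper intends: the corollary is stated as an immediate consequence of the Ando--Haagerup fact, using that each $(\mathcal{R},\vp_n)$ lies in all three classes while the ultraproduct, being non-semifinite, lies in none, so none of the classes is closed under ultraproducts. The translation through Theorem \ref{ocneanutheorem} and the chain of implications (II$_1$ factor $\Rightarrow$ finite $\Rightarrow$ semifinite) are spelled out correctly.
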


\begin{prop}
The class of II$_\infty$ factors is also not axiomatizable. 
\end{prop}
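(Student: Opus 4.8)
The plan is to derive the non-axiomatizability of the class of $\mathrm{II}_\infty$ factors as a consequence of the preceding facts, using a standard ultraproduct/ultrapower closure argument. Recall that a class $\cal C$ of $\cal L_{W^*}$-structures is axiomatizable (elementary) if and only if it is closed under isomorphism, ultraproducts, and elementary substructures; in particular, an axiomatizable class must be closed under ultraproducts. So it suffices to exhibit a family $(M_i,\vp_i)_{i\in I}$ of $\mathrm{II}_\infty$ factors and an ultrafilter $\cU$ on $I$ such that $\prod^\cU(M_i,\vp_i)$ is not a $\mathrm{II}_\infty$ factor; by Theorem \ref{ocneanutheorem}, the model-theoretic ultraproduct $\prod_\cU \cal D(M_i,\vp_i)$ is then the dissection of a von Neumann algebra that is not a $\mathrm{II}_\infty$ factor, so it is not in the class of dissections of $\mathrm{II}_\infty$ factors.

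First I would produce the family. Take the family $\vp_n$ of faithful normal states on the hyperfinite $\mathrm{II}_1$ factor $\cal R$ furnished by the Fact just cited (\cite[Proposition 6.3]{AH}), so that $\prod_\cU(\cal R,\vp_n)$ is not semifinite. Now tensor with a fixed $\mathrm{II}_\infty$ factor: set $M_n := \cal R \,\overline{\otimes}\, B(\ell^2)$, which is a $\mathrm{II}_\infty$ factor, equipped with the state $\psi_n := \vp_n \otimes \rho$ for a fixed faithful normal state $\rho$ on $B(\ell^2)$. Then each $(M_n,\psi_n)$ is a $\mathrm{II}_\infty$ factor, and the Ocneanu ultraproduct $\prod^\cU(M_n,\psi_n)$ contains $\prod^\cU(\cal R,\vp_n)$ (with the modular-compatible conditional expectation coming from the tensor decomposition and the state $\rho$, which makes this an embedding of $\mathrm{W}^*$-probability spaces in the sense of this paper). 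Since $\prod^\cU(\cal R,\vp_n)$ is not semifinite, it admits no faithful normal semifinite trace; I would then argue that a $\mathrm{II}_\infty$ factor ultraproduct would force semifiniteness of this corner. Concretely, if $\prod^\cU(M_n,\psi_n)$ were a $\mathrm{II}_\infty$ factor, it would be semifinite, hence its restriction to the image of $\prod^\cU(\cal R,\vp_n)$ — which is the range of a normal conditional expectation — would be semifinite as well, contradicting the choice of the $\vp_n$. (Alternatively, and perhaps more cleanly, one observes directly that $\prod^\cU(M_n,\psi_n) \cong \prod^\cU(\cal R,\vp_n)\,\overline{\otimes}\,B(\ell^2)$ via the tensor splitting of the Ocneanu ultraproduct, and a tensor product with $B(\ell^2)$ is semifinite if and only if the other factor is; since the ultraproduct of the $\cal R$'s is not semifinite, neither is $\prod^\cU(M_n,\psi_n)$, so in particular it is not a $\mathrm{II}_\infty$ factor.)

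The main obstacle is the bookkeeping around which state to put on the tensor factor and verifying that the Ocneanu ultraproduct of a constant-tensor family really does split as a tensor product with $B(\ell^2)$ — this is where one must be careful, since the Ocneanu ultraproduct is sensitive to the states and does not commute with arbitrary tensor products. The safe route is to avoid claiming a clean tensor splitting and instead only use that $\prod^\cU(\cal R,\vp_n)$ embeds as the range of a normal conditional expectation inside $\prod^\cU(M_n,\psi_n)$ (which follows from the compatibility of the tensor decomposition with the modular automorphism groups, using Lemma \ref{expectation}), together with the elementary fact that a von Neumann subalgebra which is the image of a faithful normal conditional expectation from a semifinite von Neumann algebra is again semifinite. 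This reduces everything to the cited non-semifiniteness result and the ultraproduct-preservation theorem of the previous section.

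Finally, assembling the pieces: since $\prod^\cU(M_n,\psi_n)$ is not semifinite it is in particular not a $\mathrm{II}_\infty$ factor, so by Theorem \ref{ocneanutheorem} the model-theoretic ultraproduct $\prod_\cU \cal D(M_n,\psi_n) \cong \cal D(\prod^\cU(M_n,\psi_n))$ is not among the dissections of $\mathrm{II}_\infty$ factors, while each $\cal D(M_n,\psi_n)$ is. Hence the class of dissections of $\mathrm{II}_\infty$ factors is not closed under ultraproducts, so it is not axiomatizable. \qed
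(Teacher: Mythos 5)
Your proof is correct, but it takes a genuinely different route from the paper's. The paper keeps a \emph{single} $\mathrm{II}_\infty$ factor $N=M\otimes B(\ell^2)$ (with $M$ a $\mathrm{II}_1$ factor with trace $\tau$) and varies only the states: setting $\vp_n(x\otimes T)=\tau(x)\sum_i\lambda_{n,i}\langle Te_i,e_i\rangle$ with $\lambda_{n,1}\to 1$ and the remaining eigenvalues tending to $0$, the Ocneanu ultraproduct collapses onto the $(1,1)$-corner and is isomorphic to $(M,\tau)^{\cU}$, a $\mathrm{II}_1$ factor; this is self-contained and even exhibits an ultraproduct of $\mathrm{II}_\infty$ factors that is \emph{finite}. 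You instead recycle the already-quoted Ando--Haagerup example of states $\vp_n$ on $\mathcal R$ with $\prod_\cU(\mathcal R,\vp_n)$ non-semifinite, tensor each $(\mathcal R,\vp_n)$ with a fixed $(B(\ell^2),\rho)$, and deduce non-semifiniteness of $\prod_\cU(M_n,\psi_n)$ from the fact that $\prod_\cU(\mathcal R,\vp_n)$ sits inside it as the range of a faithful normal state-preserving conditional expectation (the slice map $\mathrm{id}\otimes\rho$ commutes with the modular groups, so the inclusion is an embedding of W$^*$-probability spaces in the sense of Lemma \ref{expectation} and passes to ultraproducts). This works, with two caveats: the ingredient you call ``elementary''---that the range of a faithful normal conditional expectation from a semifinite von Neumann algebra is semifinite---is true but is a classical theorem of Tomiyama rather than a triviality, and you are right to sidestep the tensor-splitting $\prod_\cU(M_n,\psi_n)\cong\prod_\cU(\mathcal R,\vp_n)\,\overline{\otimes}\,B(\ell^2)$, which is delicate for state-dependent ultraproducts. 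In short, the paper's argument is shorter and needs no input beyond computing the collapsed ultraproduct, while yours avoids that computation at the price of invoking Tomiyama's theorem and the permanence of conditional expectations under ultraproducts.
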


\begin{proof}
Let $M$ be a II$_1$ factor with unique trace $\tau$ and set $N:=M\otimes B(\ell^2)$.   Let $(e_i)$ be the standard orthonormal basis for $\ell^2$. Define states $\vp_n$ on $N$ by setting $\vp_n(x\otimes T):=\tau(x)\left(\sum_i \lambda_{n,i} \ip{Te_i}{ e_i}\right)$ with $\lambda_{n,i} \not= 0$; note that each $\vp_n$ is faithful. Further assume that $\lambda_{n,1}\to 1$ while the rest tend to zero. Then $\prod_{\cU}(N,\vp_n)\cong (M,\tau_M)^{\cU}$, a II$_1$ factor.
\end{proof}

\begin{remark}
In \cite[Theorem 2.10]{Dab}, it is shown that the class of \wstar-probability spaces of the form $(N\otimes \cal B(H),\vp)$, where $N$ is a tracial von Neumann algebra and $\vp$ is a \emph{geometric state} on $N\otimes \cal B(H)$ is elementary in a natural expansion of Dabrowski's language by constants for matrix units.  Here, a geometric state on $N\otimes \cal B(H)$ means that the restriction of the state to $\cal B(H)$ has a particularly simple form.  It is also shown that, in the same expansion, the subclasses of type I$_\infty$ factors and type II$_\infty$ factors (again equipped with geometric states) are each axiomatizable.  The interested reader can readily adapt this axiomatization to our setting.
\end{remark}

As pointed out at the end of Section 3.1 of \cite{AH}, given a von Neumann algebra $M$ and two faithful normal states $\vp_1$ and $\vp_2$, we have $\prod_{\cU}(M,\vp_1)\cong \prod_{\cU}(M,\vp_2)$, whence there is a well-defined notion of the \textbf{ultrapower} of $M$.  In this case, we denote the algebra simply by $M^{\cU}$.  The following is \cite[Theorem 6.18]{AH}:

\begin{fact}\label{notfactor}
Suppose that $M$ is a $\sigma$-finite factor of type III$_0$.  Then $M^\cU$ is \emph{not} a factor.
\end{fact}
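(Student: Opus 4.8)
The plan is to deduce this from Connes--Takesaki structure theory, using the fact that the modular automorphism group commutes with the Ocneanu ultraproduct. Recall that for any von Neumann algebra $P$ with faithful normal state $\psi$, the continuous core $\widetilde P := P\rtimes_{\sigma^\psi}\mathbb R$ is semifinite and carries the dual, trace-scaling flow $\theta$, and that $Z(\widetilde P)^{\theta}=Z(P)$; hence $P$ is a factor if and only if $\theta$ acts ergodically on the abelian algebra $Z(\widetilde P)$, i.e.\ the flow of weights of $P$ is ergodic. That $M$ is of type III$_0$ means exactly that its flow of weights $(Z(\widetilde M),\theta)$ is \emph{properly ergodic}: ergodic, but acting on a non-atomic measure space with no orbit of full measure. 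So it suffices to show that the flow of weights of $M^{\cU}=\prod^{\cU}(M,\vp)$ fails to be ergodic.

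First I would establish that forming the continuous core commutes with the ultraproduct: $\widetilde{M^{\cU}}\cong(\widetilde M)^{\cU}$, equivariantly for the trace-scaling flows, where $(\widetilde M)^{\cU}$ is the semifinite (Groh--Raynaud-type) ultrapower of the type II$_\infty$ algebra $\widetilde M$. This rests on the identity $\sigma_t^{\vp^{\cU}}=(\sigma_t^{\vp})^{\cU}$ on $M^{\cU}$ --- essentially the Ando--Haagerup result recalled earlier --- together with compatibility of crossed products by the (amenable) group $\mathbb R$ with ultraproducts.

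With this identification, the flow of weights of $M^{\cU}$ is $\theta^{\cU}$ acting on $Z((\widetilde M)^{\cU})$, and since any sequence that is \emph{exactly} central in $\widetilde M$ has central $\cU$-class, $(Z(\widetilde M))^{\cU}\subseteq Z((\widetilde M)^{\cU})$. Thus it is enough to produce a sequence $(p_n)$ of projections in $Z(\widetilde M)$, bounded away from $0$ and $1$ in normalized trace and not convergent to a scalar, which is asymptotically $\theta$-invariant, $\lim_n\|\theta_t(p_n)-p_n\|_{\tau}=0$ for every $t$. Such a sequence exists because the flow of weights of $M$ is an ergodic action of the amenable group $\mathbb R$ on a non-atomic space and so is \emph{not} strongly ergodic, by the Connes--Feldman--Weiss / Jones--Schmidt theory of asymptotically invariant sequences for amenable actions. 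Passing to the $\cU$-class (reindexed along $\cU$ by the usual diagonal argument, so that it is genuinely $\theta^{\cU}$-fixed) yields a non-scalar element of $Z((\widetilde M)^{\cU})^{\theta^{\cU}}=Z(\widetilde{M^{\cU}})^{\theta}=Z(M^{\cU})$, so $M^{\cU}$ is not a factor.

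The main obstacle is the first step: proving $\widetilde{M^{\cU}}\cong(\widetilde M)^{\cU}$ with the flows identified. This forces one out of the $\sigma$-finite, state-equipped world of the rest of this paper and into ultraproducts of general weight-equipped (non-$\sigma$-finite) von Neumann algebras, and it is precisely the technical heart of \cite{AH}. A secondary difficulty is the measure-theoretic bookkeeping in the last step when the flow of weights is of infinite-measure type, where one needs the infinite-measure form of non-strong-ergodicity for amenable flows. An alternative that lowers the overhead of the first step is to use the \emph{discrete} decomposition $M\cong N\rtimes_{\theta}\mathbb Z$ with $N$ of type II$_\infty$ and $Z(N)$ non-atomic carrying an ergodic but not transitive $\mathbb Z$-action (which is what type III$_0$ encodes): the same amenable-asymptotic-invariance input produces asymptotically $\theta$-invariant projections in $Z(N)$, whose $\cU$-classes in $N^{\cU}$ descend to non-scalar central projections of $M^{\cU}\cong N^{\cU}\rtimes_{\theta}\mathbb Z$.
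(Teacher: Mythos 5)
The paper does not actually prove this statement; it is quoted from \cite[Theorem 6.18]{AH}, so the comparison is with Ando and Haagerup's argument. Your heuristic is the right one (III$_0$ means the flow of weights is properly ergodic, and ergodic amenable actions on diffuse spaces admit nontrivial asymptotically invariant sequences), but both of your routes hinge on a step that, as stated, is false: crossed products do \emph{not} commute with ultraproducts. Take $N=L^\infty(\{0,1\}^{\mathbb N})$ with the odometer action $\theta$ of $\mathbb Z$, so that $N\rtimes_\theta\mathbb Z\cong\mathcal R$ is the hyperfinite II$_1$ factor. Your final paragraph, applied verbatim to this situation, produces asymptotically $\theta$-invariant projections in the diffuse algebra $Z(N)=N$ (e.g.\ coordinate cylinder sets) and concludes that their $\cU$-classes are non-scalar central projections of $(N\rtimes_\theta\mathbb Z)^\cU\cong N^\cU\rtimes_\theta\mathbb Z$; this would show $\mathcal R^\cU$ is not a factor, which is false. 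The same objection applies to the continuous-core route: $\widetilde{M^\cU}\cong(\widetilde M)^\cU$ is not a routine ``compatibility of crossed products by amenable groups with ultraproducts'' --- no such compatibility exists --- and relating the core of the Ocneanu ultrapower to an ultrapower of the core forces one into Groh--Raynaud ultrapowers of non-$\sigma$-finite algebras; it is not available in the clean, flow-equivariant form your argument requires.

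What the counterexample isolates is the genuinely missing ingredient: to land in $Z(M^\cU)$ rather than merely in $M'\cap M^\cU$, your sequence $(e_n)$ must asymptotically commute with arbitrary bounded sequences $(y_n)$ representing elements of $M^\cU$, not just with each fixed element of $M$, and nothing in your sketch controls how $y_n$ spreads over the powers of the implementing unitary as $n\to\infty$. This is exactly where the type III$_0$ hypothesis must be used. The argument in \cite{AH} takes a lacunary weight/state $\vp$, so that $M_\vp'\cap M=Z(M_\vp)$ is diffuse and $\log\Delta_\vp$ has a spectral gap around $0$; an asymptotically invariant sequence $e_n\in Z(M_\vp)$ then commutes exactly with $M_\vp$ and asymptotically with each bounded spectral subspace $M(\sigma^\vp,[-a,a])$, and the uniform approximation of multiplier sequences by sequences lying in a single bounded spectral subspace --- \cite[Proposition 4.11]{AH}, recorded as Fact~\ref{modularsubspaces}(2) in this paper --- is what upgrades this to commutation with every element of $M^\cU$. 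In the tracial case there is no such spectral gap, which is why $\mathcal R^\cU$ remains a factor and why your argument, lacking this input, has a genuine gap.
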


Recall that a class of structures is called \textbf{local} if it is closed under elementary equivalence; equivalently, the class is closed under isomorphism, ultraroot, and ultrapower.

\begin{cor}
The following classes are not local:  factors, type III$_0$ factors.
\end{cor}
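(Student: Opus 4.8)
The plan is to deduce both non-locality statements directly from Fact \ref{notfactor} together with the observation that being local is equivalent to being closed under isomorphism, ultrapower, and ultraroot. First I would recall that a class $\mathcal{C}$ of structures is local precisely when it is closed under elementary equivalence, and that in continuous logic two structures are elementarily equivalent if and only if they have isomorphic ultrapowers (by the Keisler--Shelah theorem); in particular, if $\mathcal{C}$ is local and $A \in \mathcal{C}$, then $A^{\cU} \in \mathcal{C}$ for every ultrafilter $\cU$. So to show a class is \emph{not} local it suffices to exhibit a member whose ultrapower leaves the class.

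For the class of factors: take $M$ to be any $\sigma$-finite factor of type III$_0$ (such algebras exist, e.g.\ a Powers-type or Krieger factor), equipped with any faithful normal state $\vp$, so that $(M,\vp)$ is a \wstar-probability space with $M$ a factor. By Fact \ref{notfactor}, the ultrapower $M^{\cU}$ is not a factor, and by the preceding discussion this ultrapower is (via the equivalence of categories of Theorem \ref{EquivalenceOfModels} and the identification of ultraproducts in Theorem \ref{ocneanutheorem}) the model associated to the model-theoretic ultrapower of $\cal D(M,\vp)$. Hence the class of dissections of \wstar-probability spaces whose underlying algebra is a factor is not closed under ultrapowers, so it is not local.

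For the class of type III$_0$ factors: the same witness $(M,\vp)$ works, since a III$_0$ factor is in particular a factor, and by Fact \ref{notfactor} its ultrapower $M^{\cU}$ is not a factor, hence a fortiori not a III$_0$ factor. So this subclass is likewise not closed under ultrapowers and therefore not local. (One could equally appeal to the earlier Corollary that factors, II$_1$ factors, etc.\ are not elementary, but the III$_0$ input is what is needed to separate the ultrapower, which ultraproduct-instability alone does not give.)

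The main thing to be careful about — rather than a genuine obstacle — is bookkeeping: one must confirm that the model-theoretic ultrapower of the dissection $\cal D(M,\vp)$ corresponds, under the equivalence of categories, to the dissection of the Ocneanu ultrapower $\prod_{\cU}(M,\vp)$, and that the latter is well-defined independent of the state (the remark preceding Fact \ref{notfactor}) so that it legitimately deserves the notation $M^{\cU}$. Both of these are already established earlier in the paper (Theorems \ref{EquivalenceOfModels} and \ref{ocneanutheorem}, and the cited result of Ando--Haagerup), so the proof amounts to assembling these ingredients and invoking Fact \ref{notfactor}.
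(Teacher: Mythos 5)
Your argument is correct and is exactly the intended one: the paper treats this as an immediate corollary of Fact \ref{notfactor}, since a local class must be closed under ultrapowers and any $\sigma$-finite type III$_0$ factor has a non-factor ultrapower, which simultaneously expels it from both classes. Your additional bookkeeping (state-independence of $M^{\cU}$ and the matching of model-theoretic and Ocneanu ultrapowers) is the right thing to check and is already supplied by the earlier results you cite.
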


However:

\begin{prop}
The following classes are local:  finite algebras, semi-finite algebras, type I$_n$ factors (for a fixed $n$), type II$_1$ factors, type II$_\infty$ factors.
\end{prop}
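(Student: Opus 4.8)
The plan is to show that each of the listed classes is closed under elementary equivalence by exhibiting, for each, an isomorphism invariant that is detected by the theory in our language $\cal L_{W^*}$. Since a class is local precisely when it is closed under isomorphism, ultrapowers, and ultraroots, and since (unlike the type III$_0$ case) the Ocneanu ultrapower $M^\cU$ of a finite, semifinite, type I$_n$, type II$_1$, or type II$_\infty$ algebra stays in the same class, the real content is to package these known stability facts model-theoretically via Theorem \ref{ocneanutheorem}, which identifies the model-theoretic ultraproduct of dissections with the dissection of the Ocneanu ultraproduct.

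First I would handle finiteness: a $\sigma$-finite von Neumann algebra $M$ is finite if and only if it admits a faithful normal tracial state, equivalently, if and only if its modular automorphism group for \emph{some} (equivalently every) faithful normal state is inner; but the cleanest route is to use that $M$ is finite iff $M^\cU$ is finite (Ando--Haagerup), and to note that finiteness of $(M,\vp)$ is expressible as: there exists a faithful normal state whose modular automorphism group is trivial. In our expanded language $\cal L_{W^*-\md}$ the modular automorphism group is definable, so ``$\sigma^\vp_t = \id$ for all $t$'' is a (type of) condition; one must be slightly careful since $\vp$ is fixed in a dissection, so finiteness of $M$ is really the statement that $M$ carries \emph{some} trace, not that $\vp$ is a trace. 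The argument I would give is the abstract one: the class of dissections of finite algebras is closed under isomorphism trivially, and closed under ultrapower and ultraroot because, by Theorem \ref{ocneanutheorem} and the Ando--Haagerup results (Fact \ref{notfactor} is the obstruction in the III$_0$ case; its positive counterparts give that finiteness, semifiniteness, being type I$_n$, II$_1$, II$_\infty$ all pass to and descend from ultrapowers). The same template dispatches semifiniteness and the type classifications: each is an isomorphism invariant of $M$ that Ando--Haagerup show is preserved by the ultrapower construction in both directions, so the corresponding class of dissections is closed under elementary equivalence.

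Concretely, the key steps in order are: (i) recall that a class is local iff closed under isomorphism, ultrapower, and ultraroot; (ii) observe each listed property is an isomorphism invariant of the underlying von Neumann algebra, hence the class of dissections is closed under isomorphism; (iii) invoke Theorem \ref{ocneanutheorem} to reduce ultrapower/ultraroot questions for dissections to the corresponding questions for Ocneanu ultrapowers; (iv) cite the relevant Ando--Haagerup stability results --- $M$ finite $\iff M^\cU$ finite, $M$ semifinite $\iff M^\cU$ semifinite, and for factors $M$ of type I$_n$, II$_1$, or II$_\infty$ one has $M^\cU$ of the same type and conversely --- to conclude closure under both ultrapower and ultraroot. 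For type II$_\infty$ one also uses that a semifinite factor which is not finite and not type I is type II$_\infty$, reducing to the semifinite and type I cases.

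The main obstacle is step (iv) in the type II$_\infty$ and semifinite cases: unlike finiteness, which is robustly characterized via traces (or triviality of the modular group), ``semifinite'' is a statement about the existence of a faithful normal semifinite \emph{weight}, and one must check this is transported correctly by the Ocneanu construction in both directions --- passing to the ultrapower is the easier half, while descending from $M^\cU$ semifinite to $M$ semifinite is where one genuinely needs the Ando--Haagerup analysis of the modular structure of the ultrapower (the type III$_\lambda$ classification and Connes' $S$-invariant behave well under $\cU$, and this is exactly what fails at III$_0$). I would state these as consequences of \cite{AH} without reproving them, flagging that the II$_\infty$ case is handled by combining the semifinite and type I$_n$ cases together with the observation that the type decomposition of a $\sigma$-finite von Neumann algebra is an elementary invariant once finiteness, semifiniteness, and the discrete part are controlled.
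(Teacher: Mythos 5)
Your overall skeleton matches the paper's: a class is local iff it is closed under isomorphism, ultrapower, and ultraroot; Theorem \ref{ocneanutheorem} converts these into questions about the Ocneanu ultrapower $M^\cU$; and one then checks that each listed type is preserved in both directions. The cases of finite algebras, type I$_n$ factors, and type II$_1$ factors are indeed routine along these lines (choose a tracial state, so that $M^\cU$ is the tracial ultrapower; descent uses the state-preserving conditional expectation $M^\cU\to M$). But there is a genuine gap exactly where you flag ``the main obstacle'': for semifinite algebras and type II$_\infty$ factors you assert the ascent direction ($M$ semifinite $\Rightarrow M^\cU$ semifinite, $M$ of type II$_\infty$ $\Rightarrow M^\cU$ of type II$_\infty$) as an unspecified ``Ando--Haagerup stability result'' and explicitly decline to prove it. This is not something you can wave at: Fact 7.1 of the paper (Ando--Haagerup's Proposition 6.3) shows that an Ocneanu \emph{ultraproduct} of copies of the hyperfinite II$_1$ factor with varying states can fail to be semifinite, so preservation of semifiniteness is genuinely delicate and cannot be inferred from a vague ``positive counterpart'' of Fact \ref{notfactor}. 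Your closing remark that the type decomposition is ``an elementary invariant once finiteness, semifiniteness, and the discrete part are controlled'' is circular in this context, since semifiniteness is precisely what is at issue.

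The paper closes this gap with a single concrete ingredient that your proposal is missing: the folklore isomorphism $(M\overline{\otimes} B(H))^{\cU}\cong M^{\cU}\overline{\otimes} B(H)$ for $M$ with separable predual, $H$ separable, and $\cU$ on $\mathbb N$ (Masuda--Tomatsu, \cite[Lemma 2.8]{MT}). Writing a properly infinite semifinite algebra as $N\overline{\otimes}B(\ell^2)$ with $N$ finite (and a II$_\infty$ factor as $N\overline{\otimes}B(\ell^2)$ with $N$ a II$_1$ factor), this reduces the ascent direction to the finite and II$_1$ cases already handled. If you replace your appeal to ``Ando--Haagerup stability'' by this tensor-commutation fact, your argument becomes the paper's. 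Separately, your detour through traces, inner modular groups, and the expanded language $\cal L_{W^*-\md}$ for finiteness is a red herring that you yourself abandon; it can be deleted without loss.
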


\begin{proof}
The only classes that need explanation are that of the case of semi-finite algebras and type II$_\infty$ factors; these classes are seen to be local by using the fact (which appears to be folklore, though the interested reader can find a proof in \cite[Lemma 2.8]{MT}) that, given a von Neumann algebra $M$ with separable predual, a separable Hilbert space $H$, and an ultrafilter $\cU$ on $\mathbb{N}$, that $(M\overline{\otimes} B(H))^{\cU}\cong M^{\cU}\overline{\otimes} B(H)$.
\end{proof}

Finally, we mention the following positive result.

\begin{prop}
Suppose that $S\subseteq (0,1]$ is closed.  Then the set of type III$_\lambda$ factors where $\lambda\in S$ is an axiomatizable class.
\end{prop}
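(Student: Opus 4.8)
The plan is to exploit the classification of type III factors by Connes' $S$-invariant together with the fact, established in Section \ref{SectionExpanding}, that the modular automorphism group is definable in our language. Recall that a $\sigma$-finite factor $M$ is of type III$_\lambda$ for $\lambda\in(0,1)$ precisely when $S(M)=\{0\}\cup\{\lambda^n:n\in\mathbb Z\}$, and of type III$_1$ when $S(M)=[0,\infty)$; moreover Connes showed that for a factor the point $\lambda$ (equivalently the ``$T$-invariant'' or, in the III$_\lambda$ case, the period of the flow of weights) is detected by which $\Delta^{it}_\varphi$ are inner, for any one faithful normal state $\varphi$. The strategy is to translate ``$\lambda\in S$'' into an approximate, first-order-expressible condition on the operators $\Delta^{it}$ and $\sigma^\varphi_t$, which are available by the expanded theory $T_{W^*-\md}$; since $S$ is closed, the set of admissible $\lambda$ will be cut out by a family of closed conditions, and closed conditions are exactly what axiomatizability allows.

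Here are the steps in order. First, I would note that the class of $\sigma$-finite \emph{factors} is \emph{not} axiomatizable in general (Fact \ref{notfactor} shows factoriality fails even for ultrapowers of III$_0$ factors), so one cannot simply say ``factor and $\lambda\in S$''; instead one works with the class of type III$_\lambda$ factors for $\lambda\in(0,1]$, which, by the Ando--Haagerup analysis, \emph{is} closed under Ocneanu ultraproducts — indeed $\prod^\cU(M_i,\varphi_i)$ is a type III$_{\lambda}$ factor whenever all $M_i$ are, with the value of $\lambda$ preserved, because the $S$-invariant is read off the modular flow and the modular flow commutes with the ultraproduct (the cited Fact after Theorem \ref{ocneanutheorem}'s section, from \cite[Theorem 4.1]{AH}). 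So it suffices to produce closed conditions, uniformly in $\lambda\in S$, expressing ``$M$ is a type III$_\lambda$ factor.'' Second, I would express factoriality-plus-type-III$_\lambda$ via the condition that the center is trivial and $\Delta^{it}_\varphi$ is inner exactly when $\lambda^{it}=1$, i.e.\ $t\in\frac{2\pi}{|\log\lambda|}\mathbb Z$ (with the convention that for $\lambda=1$ this set is $\{0\}$): the statement ``$\Delta^{it}$ is inner'' unwinds, via the definability of $\sigma_t$ and $\Delta^{it}$ from Section \ref{SectionExpanding}, to the sup-inf condition $\sup_{x\in S_1}\inf_{u}\,\|\sigma^\varphi_t(x)-uxu^*\|^\#_\varphi$ over unitaries $u$ in a bounded sort, which is a closed condition; ``$\Delta^{it}$ is \emph{not} inner'' is the corresponding statement that this quantity is bounded below by a positive rational, again a closed condition. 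Triviality of the center is likewise a standard closed condition in the sorts. Third, for each $\lambda\in S$ I would collect the resulting closed conditions into an axiom scheme $T_\lambda$, and then take $T_S:=T_{W^*-\md}\cup\bigcup_{\lambda\in S}(\text{the ``sup'' half of }T_\lambda\text{ that is common to all }\lambda\in S)$ together with a single closed condition asserting that the relevant invariant quantity $r(M):=\inf\{t>0:\Delta^{it}\text{ inner}\}$ (suitably coded) lies in the closed set $\{\,2\pi/|\log\lambda|:\lambda\in S\cap(0,1)\,\}\cup(\{+\infty\}\text{ if }1\in S)$; a model of $T_{W^*-\md}$ satisfies this iff it is a type III$_\lambda$ factor with $\lambda\in S$, where closedness of $S$ is exactly what makes the target set of values closed, hence the condition expressible.

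\textbf{The main obstacle.} The genuinely delicate point is the last step: faithfully encoding the Connes invariant as a \emph{single closed condition} whose truth set is the closed parameter set $S$, rather than an open or $F_\sigma$ one. Two issues must be handled carefully. (i) One must pin down ``$\Delta^{it}$ inner'' uniformly and as a \emph{closed} (not merely Borel) condition; the natural formulation $\inf_u\|\sigma_t^\varphi(x)-uxu^*\|^\#_\varphi=0$ for all $x$ works because the unitary group of each bounded sort, while not a sort itself, is a definable set (the self-adjoint elements and exponentials are available through the $h_a(\log\Delta)$-type machinery and the definability results of Sections \ref{SectionLanguage}--\ref{SectionExpanding}), so the quantifier over $u$ is a legitimate infimum over a definable set. (ii) One must ensure the map $\lambda\mapsto$ (period $2\pi/|\log\lambda|$) interacts with closedness correctly, including the boundary case $\lambda=1$ where the period degenerates to $0^+$; here it is cleaner to parametrize by $s=|\log\lambda|\in[0,\infty)$ (so $S$ closed in $(0,1]$ corresponds to a closed subset of $[0,\infty)$ under $\lambda\mapsto|\log\lambda|$, with $\lambda=1\leftrightarrow s=0$), express the condition directly in terms of which $\Delta^{it}$ are inner as $t$ ranges over a dense set of rationals, and invoke the fact that a factor is type III$_\lambda$ iff the closure of $\{t:\Delta^{it}\text{ inner}\}$ equals the subgroup $s\mathbb Z$ — a condition on a definable predicate that is manifestly closed. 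Once this coding is in place, soundness (dissections of type III$_\lambda$ factors, $\lambda\in S$, model $T_S$) is immediate from Section \ref{SectionRvD} and the definability of $\sigma_t$, and completeness (models of $T_S$ are such dissections) follows from Theorem \ref{EquivalenceOfModels} together with Connes' classification applied to the associated \wstar-probability space.
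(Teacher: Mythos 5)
Your plan takes a genuinely different route from the paper — you try to write down explicit axioms via the modular flow and Connes' invariants, whereas the paper argues purely semantically: closure under ultraproducts comes from \cite[Theorem 6.11]{AH} (with closedness of $S$ guaranteeing that the $\cU$-limit of the parameters stays in $S$), and closure under ultraroots comes from eliminating the alternatives for the root $M$ (it must be a factor; it cannot be semifinite since then $M^\cU$ would be; it cannot be III$_0$ by Fact \ref{notfactor}; hence it is III$_\eta$ and $\eta=\lambda$ by the ultraproduct step). Unfortunately your explicit approach has genuine gaps that I do not see how to repair. First, you assert that ``triviality of the center is a standard closed condition in the sorts,'' but this contradicts both your own opening observation and the paper's results: the class of factors is \emph{not} local in this setting (this is exactly the content of the corollary to Fact \ref{notfactor}), so factoriality cannot be cut out by closed conditions. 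In the tracial setting one axiomatizes factoriality using the trace and Dixmier averaging; no analogue is available here, and indeed none can be.

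Second, the invariant you propose to encode is the wrong one. The set $T(M)=\{t:\sigma^\varphi_t \text{ inner}\}$ (Connes' $T$-invariant) does not characterize the type: III$_0$ factors can have $T(M)$ equal to $s\mathbb Z$ for $s>0$, and both III$_0$ and III$_1$ factors can have $T(M)=\{0\}$, so ``closure of $\{t:\Delta^{it}\text{ inner}\}$ equals $s\mathbb Z$'' neither implies type III$_\lambda$ nor handles the case $\lambda=1$. What determines the type is the $S$-invariant (the Connes spectrum), which is not a condition of the form you describe. Third, even granting the characterization, your conditions of the form ``$\sigma_t$ is \emph{not} inner'' are not closed conditions on the class: $\sup_x\inf_u\|\sigma_t(x)-uxu^*\|^\#_\varphi\geq\epsilon$ is closed only for a \emph{fixed} $\epsilon>0$, and there is no uniform such $\epsilon$ across all type III$_\lambda$ factors; the union over $\epsilon>0$ is not an axiomatizable condition. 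You correctly flag this encoding as ``the main obstacle,'' but the resolution you sketch does not overcome it. The semantic route via ultraproducts and ultraroots avoids all of these issues and is the one the paper takes.
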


\begin{proof}
In \cite[Theorem 6.11]{AH}, it is shown that if $M$ is a type III$_\lambda$ factor for $\lambda\in (0,1]$ and $\vp_n$ is any family of faithful normal states on $M$, then $\prod_{\cU}(M,\vp_n)$ is once again a type III$_\lambda$ factor.  The same proof shows that the class of pairs $(M,\vp)$, where $M$ is a type III$_\lambda$ factor with $\lambda\in S$, is closed under ultraproducts.  

It remains to see that this class is closed under ultraroots.  Thus, suppose that $(M,\vp)$ is such that $M^\cU$ is a type III$_\lambda$ factor with $\lambda\in S$.  It is easy to see that $M$ must then be a factor.  $M$ cannot be semifinite for then $M^\cU$ would also be semifinite.  $M$ cannot be III$_0$ for then $M^\cU$ would not be a factor by Fact \ref{notfactor}.  Thus, $M$ must be type III$_\eta$ for $\eta\in (0,1]$; by the previous paragraph, it must be that $\eta=\lambda$.
\end{proof}

\begin{remarks}

\

\begin{enumerate}
    \item The previous theorem in the case that $S$ is a singleton was stated by Dabrowski in his language in \cite[Theorem 2.10(3)]{Dab}.
    \item In \cite[Section 4]{GoldbringHoudayer}, it was shown that the class of III$_1$ factors is $\forall\exists$-axiomatiz-able, whereas, 
    for $\lambda\in (0,1)$, the class of III$_\lambda$ factors is not $\forall\exists$-axiomatizable 
    nor $\exists\forall$-axiomatizable, but is both $\forall\exists\forall$-axiomatiz-able and 
    $\exists\forall\exists$-axiomatiz-able.  In all cases, the quantifier-complexity of the 
    axiomatizations are deduced from semantic criteria; it would be very interesting 
    to give concrete (hopefully even effective) axiomatizations of these classes.
    \item In \cite[Theorem 10(2)]{Dab}, Dabrowski showed that, for a fixed $\lambda\in (0,1)$, the class of \wstar-probability spaces of the form $(M,\vp)$, where $M$ is a type III$_\lambda$ factor and $\vp$ is a \textbf{periodic} state is axiomatizable in his language for \wstar-probability spaces.  The interested reader can readily adapt Dabrowski's axiomatization to our setting.
    \end{enumerate}
\end{remarks}

\section*{Acknowledgements} The authors are indebted to Hiroshi Ando for discussions on the Ocneanu ultraproduct and the main results in \cite{AH}. 
The second author would like to thank the Institut Henri Poincar\' e for their hospitality during the 2018 trimester on Model theory, valued fields, and combinatorics, where some of this work took place.


\end{document}